\newcommand{\LO}{\ensuremath{L_{\gamma,\alpha}}}
\newcommand{\Bm}{\ensuremath{\beta_-(\gamma)}}
\newcommand{\Bp}{\ensuremath{\beta_+(\gamma)}}
\newcommand{\R}{\ensuremath{\mathbb{R}}}
\newcommand{\h}{\ensuremath{H^{\frac{\alpha}{2}}_0(\mathbb{R}^n)}}
\newcommand{\homega}{\ensuremath{H_0^{\frac{\alpha}{2}}(\Omega)}}
\newcommand{\N}{\ensuremath{\mathbb{N}}}
\def \rr {\mathbb{R}}
\def \nn {\mathbb{N}}
\def \rn {\mathbb{R}^n}
\def \ue {u_\epsilon}
\def \Te {T_\epsilon}
\def \eps {\epsilon}
\def \crits {2_{\alpha}^\star(s)}
\def \bp {\beta_+}
\def \bm {\beta_-}
\title[Mass and Asymptotics associated to Fractional Hardy-Schr\"odinger Operators]{Mass and Asymptotics associated to Fractional Hardy-Schr\"odinger Operators in Critical Regimes}
\author{Nassif Ghoussoub}
\address{Nassif Ghoussoub, Department of Mathematics, 1984 Mathematics Road, The University of British Columbia, BC, Canada V6T 1Z2}
\email{nassif@math.ubc.ca}
\author{Fr\'ed\'eric Robert}
\address{Fr\'ed\'eric Robert, Institut \'Elie Cartan, Universit\'e de Lorraine, BP 70239, F-54506 Vand{\oe}uvre-l\`es-Nancy, France}
\email{frederic.robert@univ-lorraine.fr}
\author{Shaya Shakerian}
\address{Shaya Shakerian, Department of Mathematics, 1984 Mathematics Road, The University of British Columbia, BC, Canada V6T 1Z2}
\email{shaya@math.ubc.ca}
\author{Mingefeng Zhao}
\address{Mingefeng Zhao, Department of Mathematics, 1984 Mathematics Road, The University of British Columbia, BC, Canada V6T 1Z2}
\email{mingefeng@math.ubc.ca}
\date{April 27th, 2017}
\newtheorem{definition}{Definition}[section]
\newtheorem{theorem}[definition]{Theorem}
\newtheorem{lemma}[definition]{Lemma}
\newtheorem{proposition}[definition]{Proposition}
\newtheorem{remark}[definition]{Remark}
\theoremstyle{definition}
\begin{document}

\begin{abstract} We consider linear and non-linear boundary value problems associated to the fractional Hardy-Schr\"odinger operator $ \LO: = ({-}{ \Delta})^{\frac{\alpha}{2}}-  \frac{\gamma}{|x|^{\alpha}}$ on domains of $\R^n$ containing the singularity $0$, where $0<\alpha<2$ and  $ 0 \le \gamma < \gamma_H(\alpha)$, the latter being the best constant in the fractional Hardy inequality on $\R^n$.  
We tackle the existence of least-energy solutions for the borderline boundary value problem $(\LO-\lambda I)u= {\frac{u^{\crits-1}}{|x|^s}}$ on $\Omega$, where $0\leq s <\alpha <n$ and $ {\crits}={\frac{2(n-s)}{n-{\alpha}}}$ is the critical fractional Sobolev exponent. We show that if $\gamma$ is below a certain threshold $\gamma_{crit}$, then such solutions exist for all $0<\lambda <\lambda_1(\LO)$, the latter being the first eigenvalue of $\LO$. On the other hand, for $\gamma_{crit}<\gamma <\gamma_H(\alpha)$, we prove existence of such solutions only for those $\lambda$ in $(0, \lambda_1(\LO))$ for which the domain $\Omega$ has a positive {\it fractional Hardy-Schr\"odinger mass} $m_{\gamma, \lambda}(\Omega)$. This latter notion is introduced by way of an invariant of the linear equation $(\LO-\lambda I)u=0$ on $\Omega$.
\end{abstract}
 
\maketitle

\section{Introduction} 
We study various linear and non-linear equations involving the fractional Hardy-Schr\"odinger operator $ \LO: = ({-}{ \Delta})^{\frac{\alpha}{2}}-  \frac{\gamma}{|x|^{\alpha}}$, where  $\displaystyle (-\Delta)^{\frac{\alpha}{2}}$ is the so-called fractional Laplacian, defined below. Throughout this paper, we shall assume that  
\begin{equation}\label{cond1}
\hbox{$0<\alpha<n$ \quad and \quad $ 0 \le \gamma < \gamma_H(\alpha)=2^\alpha \frac{\Gamma^2(\frac{n+\alpha}{4})}{\Gamma^2(\frac{n-\alpha}{4})}$},
\end{equation}
the latter being the best constant in the fractional Hardy constant on $\R^n$ (see below). Our main focus will be on the case when $\alpha <2$, that is when $\displaystyle (-\Delta)^{\frac{\alpha}{2}}$ is not a local operator. We shall study problems on bounded domains, but will start by recalling the properties of  $\displaystyle (-\Delta)^{\frac{\alpha}{2}}$ on the whole of $\R^n$, where  it can be defined on the 
Schwartz class $\mathcal{S}$ (the space of rapidly decaying $C^\infty$ functions on $\R^n$) via the Fourier transform,
\begin{equation*}
 (-\Delta)^{\frac{\alpha}{2}}u= \mathcal{F}^{-1}(| 2 \pi \xi|^{\alpha}\mathcal{F}(u)).
 \end{equation*}
Here $ \mathcal{F}(u)$ is the Fourier transform of $u$, $\displaystyle \mathcal{F}(u)(\xi)=\int_{\R^n} e^{-2\pi i x.\xi} u(x) dx$. See Servadei-Valdinoci \cite{Servadei-Valdinoci} and references therein for the basics on the fractional Laplacian. For $\alpha \in (0,2)$, the fractional Sobolev space $\h$ is defined as the completion of $C_c^{\infty}(\R^n)$ under the norm 
$$\|u\|_{\h}^2= \int_{\mathbb{R}^n}|2\pi \xi |^{\alpha} |\mathcal{F}u(\xi)|^2 d\xi =\int_{\mathbb{R}^n} |(-\Delta)^{\frac{\alpha}{4}}u|^2 dx.$$
By Proposition 3.6 in Di Nezza-Palatucci-Valdinoci \cite{Hitchhikers-guide} (see also Frank-Lieb-Seiringer \cite{Frank-Lieb-Seiringer}), the following relation holds: 
For $u \in \h,$ 
\begin{equation*}
 \int_{\mathbb{R}^n}|2\pi \xi |^{\alpha} |\mathcal{F}u(\xi)|^2 d\xi=\frac{C_{n,\alpha}}{2}\int_{\R^n}  \int_{\R^n} \frac{|u(x)- u(y)|^2}{|x-y|^{n+\alpha}} dxdy, 
 \end{equation*} 
 where $C_{n,\alpha}=\frac{2^\alpha\Gamma\left(\frac{n+\alpha}{2}\right)}{\pi^{\frac{n}{2}}\left|\Gamma\left(-\frac{\alpha}{2}\right)\right|}.$

The fractional Hardy inequality in $\R^n$ then states that 
\begin{equation*}
\gamma_H(\alpha):= \inf\left\{\frac{ \int_{\R^n} |({-}{ \Delta})^{\frac{\alpha}{4}}u|^2 dx  }{\int_{\R^n} \frac{|u|^2}{|x|^{\alpha}}dx};\, u \in \h \setminus \{0\}\right\}=2^\alpha \frac{\Gamma^2(\frac{n+\alpha}{4})}{\Gamma^2(\frac{n-\alpha}{4})},
\end{equation*}
which means that the fractional Hardy-Schr\"odinger operator $ \LO$ is positive whenever (\ref{cond1}) is satisfied. 
In this case, a Hardy-Sobolev type inequality holds for $ \LO$. It states that if $ 0 \le s < \alpha < n$, and  $ {\crits}={\frac{2(n-s)}{n-{\alpha}}}$, then $\mu_{\gamma,s,\alpha}(\R^n)$ is finite and strictly positive, where the latter is the best constant 
\begin{equation} \label{def:mu}
\mu_{\gamma,s,\alpha}(\R^n):= \inf\limits_{u \in \h \setminus \{0\}} \frac{ \int_{\R^n} |({-}{ \Delta})^{\frac{\alpha}{4}}u|^2 dx - \gamma \int_{\R^n} \frac{|u|^2}{|x|^{\alpha}}dx }{(\int_{\R^n} \frac{|u|^{\crits}}{|x|^{s}}dx)^\frac{2}{\crits}}.
\end{equation}
Note that any minimizer for (\ref{def:mu}) leads --up to a constant-- to a  
variational solution of the following borderline problem  on $\R^n$,
 \begin{equation}\label{Main:problem}
\left\{\begin{array}{rl}
({-}{ \Delta})^{\frac{\alpha}{2}}u- \gamma \frac{u}{|x|^{\alpha}}= {\frac{u^{\crits-1}}{|x|^s}} & \text{in }  {\R^n}\\
 u\geq 0\; ;\; u\not\equiv 0 \,\,\,\,\,\,\,\,\,\,\,\,\,\,\,\,\,  & \text{in }  \mathbb{R}^n.
\end{array}\right.
\end{equation}
Indeed, a function $ u\in \h$ is said to be a weak solution to (\ref{Main:problem}) if $u\geq 0$, $u\not\equiv 0$ and for any $\displaystyle \varphi\in\h$, we have
\begin{eqnarray*}\label{def:weak:sol}
\frac{C_{n,\alpha}}{2}\int_{\mathbb{R}^n}\int_{\mathbb{R}^n} \frac{(u(x)-u(y))(\varphi(x)-\varphi(y))}{|x-y|^{n+\alpha}}\ dxdy=\int_{\mathbb{R}^n}(\gamma\frac{u}{|x|^\alpha}+\frac{u^{2_\alpha^*(s)-1}}{|x|^s})\varphi\ dx.
\end{eqnarray*}
Unlike the case of the Laplacian ($\alpha =2$), no explicit formula is known for the best constant $\mu_{\gamma,s,\alpha}(\R^n)$ nor for the extremals where it is achieved. We therefore try to describe their asymptotic profile whenever they exist.  
This was considered in  Ghoussoub-Shakerian 
\cite{GS}, where the following is proved.
\begin{theorem}[Ghoussoub-Shakerian \cite{GS}] 
Suppose $0<\alpha<2$, $ 0 \le s < \alpha<n$, and $\gamma <2^\alpha \frac{\Gamma^2(\frac{n+\alpha}{4})}{\Gamma^2(\frac{n-\alpha}{4})}$. 
\begin{enumerate}
\item If either $ s > 0$ or $\{s=0$ and $\gamma \ge 0 \}$, then $\mu_{\gamma,s,\alpha}(\R^n)$ is attained.

\item If $s=0$ and $\gamma < 0$, then there are no extremals for $\mu_{\gamma,s,\alpha}(\R^n).$

\item If either ${0 < \gamma < \gamma_H(\alpha) } \text{ or } \{ \gamma=0 \text{ and } 0<s<\alpha  \},$
 then any non-negative minimizer for $\mu_{\gamma,s,\alpha}(\R^n)$ is positive, radially symmetric, radially decreasing,  
 and approaches zero as ${|x| \to \infty}.$
 \end{enumerate} 
\end{theorem}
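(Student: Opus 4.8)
\emph{Proof plan.} Abbreviate $[u]^2:=\int_{\R^n}|(-\Delta)^{\frac{\alpha}{4}}u|^2\,dx$, and for $u\in\h\setminus\{0\}$ set
\[
J_{\gamma,s}(u):=\frac{[u]^2-\gamma\int_{\R^n}\frac{u^2}{|x|^\alpha}\,dx}{\big(\int_{\R^n}\frac{|u|^{\crits}}{|x|^s}\,dx\big)^{2/\crits}},
\]
so $\mu_{\gamma,s,\alpha}(\R^n)=\inf J_{\gamma,s}$, and observe that $J_{\gamma,s}$ is invariant under the dilations $u\mapsto\lambda^{\frac{n-\alpha}{2}}u(\lambda\,\cdot)$, and (when $s=0$) under translations. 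For \emph{(1)}, fix a minimizing sequence $(u_k)$ normalized by $\int_{\R^n}|u_k|^{\crits}|x|^{-s}\,dx=1$. Then $J_{\gamma,s}(u_k)$ equals the numerator $[u_k]^2-\gamma\int u_k^2|x|^{-\alpha}\,dx$, which dominates a fixed positive multiple of $[u_k]^2$ (trivially if $\gamma<0$; via the fractional Hardy inequality, giving numerator $\ge(1-\gamma/\gamma_H(\alpha))[u_k]^2$, if $0\le\gamma<\gamma_H(\alpha)$). Hence $(u_k)$ is bounded in $\h$ and, along a subsequence, $u_k\rightharpoonup u$ weakly. When $\gamma\ge0$ we may moreover assume each $u_k$ radially symmetric and nonincreasing: replacing $u_k$ by its symmetric decreasing rearrangement $u_k^\ast$ does not increase $[u_k]$ (fractional P\'olya--Szeg\H{o}) and does not decrease $\int u_k^2|x|^{-\alpha}$ or $\int |u_k|^{\crits}|x|^{-s}$ (Hardy--Littlewood), hence does not increase $J_{\gamma,s}$.

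The crux is to prevent loss of compactness. Using the dilation invariance, rescale $(u_k)$ so that any concentration occurs at unit scale, e.g.\ pick $\lambda_k>0$ with $\int_{B_{\lambda_k}(0)}|u_k|^{\crits}|x|^{-s}\,dx=\tfrac12$ and replace $u_k$ by $\lambda_k^{\frac{n-\alpha}{2}}u_k(\lambda_k\,\cdot)$. A normalized minimizing sequence can then fail to be precompact only through mass escaping to infinity. If $s>0$ this cannot happen: a bubble of mass escaping to infinity, of the form $\rho_k^{-\frac{n-\alpha}{2}}U\big(\rho_k^{-1}(x-x_k)\big)$ with $|x_k|\to\infty$, contributes to $\int|u_k|^{\crits}|x|^{-s}\,dx$ only an amount comparable to $(|x_k|/\rho_k)^{-s}\int|U|^{\crits}\to0$, contradicting the normalization. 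If $s=0$ and $\gamma>0$ we use instead the strict inequality $\mu_{\gamma,0,\alpha}(\R^n)<\mu_{0,0,\alpha}(\R^n)$, obtained by testing $J_{\gamma,0}$ on the explicit extremal $U$ of the fractional Sobolev inequality, for which $0<\int U^2|x|^{-\alpha}\,dx<\infty$: this gives $\mu_{\gamma,0,\alpha}(\R^n)\le\mu_{0,0,\alpha}(\R^n)-\gamma\,\frac{\int U^2|x|^{-\alpha}}{(\int U^{2^\star_\alpha})^{2/2^\star_\alpha}}<\mu_{0,0,\alpha}(\R^n)$, whereas every bubble escaping to infinity would cost at least $\mu_{0,0,\alpha}(\R^n)$. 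Feeding this into the concentration--compactness principle of P.-L.\ Lions adapted to $\h$ (equivalently, working on the Caffarelli--Silvestre extension to $\R^{n+1}_+$ to make the analysis local), together with the Brezis--Lieb lemma for the denominator, the rescaled sequence has a subsequence converging strongly in the critical norm (vanishing is excluded by the normalization of the concentration function, dichotomy and escape to infinity by the considerations above) to some $v\ne0$ with $J_{\gamma,s}(v)=\mu_{\gamma,s,\alpha}(\R^n)$; replacing $v$ by $|v|$ makes $v\ge0$. I expect this compactness step, in particular the case $\gamma<0$, $s>0$ where symmetrization is unavailable, to be the main obstacle.

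For \emph{(2)} one first shows $\mu_{\gamma,0,\alpha}(\R^n)=\mu_{0,0,\alpha}(\R^n)$ when $\gamma<0$. The inequality $\ge$ is immediate since $-\gamma\int u^2|x|^{-\alpha}\ge0$. For $\le$: given $u\in C_c^\infty(\R^n\setminus\{0\})$ and $u_R:=u(\cdot-Re_1)$, the support of $u_R$ eventually leaves every neighborhood of $0$, so $\int u_R^2|x|^{-\alpha}\,dx\to0$ as $R\to\infty$ while $[u_R]$ and $\int|u_R|^{2^\star_\alpha}\,dx$ are unchanged; hence $\mu_{\gamma,0,\alpha}(\R^n)\le\lim_{R\to\infty}J_{\gamma,0}(u_R)=[u]^2/\big(\int|u|^{2^\star_\alpha}\big)^{2/2^\star_\alpha}$, and the infimum over such $u$, which are dense in $\h$ since a point has zero $\tfrac{\alpha}{2}$-capacity when $n>\alpha$, equals $\mu_{0,0,\alpha}(\R^n)$. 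If now $v$ attained this common value, then $\mu_{0,0,\alpha}(\R^n)\big(\int v^{2^\star_\alpha}\big)^{2/2^\star_\alpha}=[v]^2+|\gamma|\int v^2|x|^{-\alpha}>[v]^2\ge\mu_{0,0,\alpha}(\R^n)\big(\int v^{2^\star_\alpha}\big)^{2/2^\star_\alpha}$, the strict inequality holding since $v\not\equiv0$ --- a contradiction. So no extremal exists.

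For \emph{(3)}, let $u\ge0$ attain $\mu_{\gamma,s,\alpha}(\R^n)$. Up to a positive multiplicative constant $u$ is a weak solution of (\ref{Main:problem}), so $(-\Delta)^{\frac{\alpha}{2}}u=\gamma\frac{u}{|x|^\alpha}+\frac{u^{\crits-1}}{|x|^s}\ge0$; by standard regularity $u$ is continuous on $\R^n\setminus\{0\}$, and the strong maximum principle for the fractional Laplacian forces $u>0$ on $\R^n$ (at a zero of $u$ the nonlocal quantity $(-\Delta)^{\frac{\alpha}{2}}u$ would be strictly negative). For the symmetry: since $\gamma\ge0$, the rearrangement argument above shows $u^\ast$ is again an extremal; because the numerator and denominator of $J_{\gamma,s}$ move in opposite directions under rearrangement, every inequality used must be an equality, and in particular $\int(u^\ast)^2|x|^{-\alpha}=\int u^2|x|^{-\alpha}$ if $\gamma>0$, respectively $\int|u^\ast|^{\crits}|x|^{-s}=\int|u|^{\crits}|x|^{-s}$ if $0<s<\alpha$. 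Since $|x|^{-\alpha}$ (resp.\ $|x|^{-s}$) is \emph{strictly} radially decreasing, the equality case of the Hardy--Littlewood inequality forces $u=u^\ast$, i.e.\ $u$ is radially symmetric about $0$ and radially nonincreasing; it is exactly the hypothesis ``$\gamma>0$ or $0<s<\alpha$'' that supplies a strictly decreasing weight, the excluded case $\gamma=s=0$ being genuinely different since the Sobolev extremals need not be centered at $0$. Finally, a radially nonincreasing $u=u^\ast$ with $\int_{\R^n}|u|^{\crits}|x|^{-s}\,dx=\omega_{n-1}\int_0^\infty u(r)^{\crits}r^{n-1-s}\,dr<\infty$ and $n-1-s>-1$ necessarily satisfies $u(r)\to0$ as $r\to\infty$.
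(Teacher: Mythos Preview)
The paper does not prove this theorem: it is quoted in the introduction as a result of Ghoussoub--Shakerian \cite{GS} and is used only as background for Theorem~\ref{th:asymp:ext}. There is therefore no proof in the paper to compare your proposal against.

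That said, your outline is the standard route and almost certainly coincides with the argument in \cite{GS}: concentration--compactness after scale-normalization for existence, translation to infinity for the nonexistence in~(2), and symmetric decreasing rearrangement together with the strict equality case of Hardy--Littlewood for~(3). A few remarks on the write-up itself. In~(1), the case you flag as ``the main obstacle'' ($\gamma<0$, $s>0$) is in fact handled by the same mechanism you describe for $s>0$: the weight $|x|^{-s}$ still kills any bubble centered away from the origin, whether or not you have symmetrized, and the rescaling you perform pins the concentration scale at the origin; no separate argument is needed. In~(3), your claim that ``at a zero of $u$ the nonlocal quantity $(-\Delta)^{\alpha/2}u$ would be strictly negative'' is correct but uses that $u\not\equiv 0$ and is continuous near that zero, so the principal-value integral is genuinely negative; you should also note that the right-hand side of the equation is only known to be nonnegative \emph{away from} $0$, so the maximum principle is being applied on $\R^n\setminus\{0\}$. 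Finally, in~(3) the conclusion ``radially nonincreasing'' can be upgraded to ``radially decreasing'' once you know $u>0$ solves the Euler--Lagrange equation: two level sets of equal value on a set of positive measure would contradict strict positivity of the right-hand side via Hopf-type arguments, or one can simply invoke the strict rearrangement inequality for the Gagliardo seminorm.
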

Note that the cases when $\gamma=0$ are by now well known. Indeed, it was stated in \cite{Cotsiolis-Tavoularis} that the infimum in $\mu_{0,0,\alpha}(\R^n)$ is attained. Actually, a function $\tilde{u} \in \h \setminus \{0\}$ is an  extremal for $\mu_{0,0,\alpha}(\R^n)$ if and only if there exist $x_0 \in \R^n,$ $k \in \R \setminus \{0\}$ and  $r>0$ and  such that
\begin{equation*}
 \tilde{u}(x) = k \left(  r^2 + |x - x_0|^2 \right)^{-\frac{(n-\alpha)}{2}} \quad  \text{ for all } x \in \R^n.
\end{equation*}
Asymptotic properties of the positive extremals of $\mu_{0,s,\alpha}(\R^n)$ (i.e., when $\gamma=0$ and $0<s<\alpha$) were  given by  Y. Lei  \cite{Lei}, Lu-Zhu \cite{Lu-Zhu}, and  Yang-Yu \cite{Yang-Yu}.
The latter proved that an extremal $\bar{u}(x)$ for $\mu_{0,s,\alpha}(\R^n)$ must have the following behaviour: There is $C>0$ such that 
\begin{equation}\label{est:ext:Lei}
 C^{-1} \left(1 + |x|^2 \right)^{-\frac{(n-\alpha)}{2}} \le  \bar{u}(x) \le C \left(1 + |x|^2 \right)^{-\frac{(n-\alpha)}{2}} \quad \hbox{for all  $x \in \R^n$.}
  \end{equation}
  Recently,  Dipierro-Montoro-Peral-Sciunzi \cite{DMPS} found a similar control of the extremal for $\mu_{\gamma,0,\alpha}(\R^n)$ (i.e., when $0 < \gamma <\gamma_H(\alpha)$ and $s=0$). Our first result is an improvement of their estimate since it gives the exact asymptotic behaviour of the extremal of $\mu_{\gamma,s,\alpha}(\R^n)$ in the general case. For that, we consider the function 
\begin{equation}\label{Definition of Psi.0}
\Psi_{n,\alpha}(\beta) := 2^\alpha \frac{\Gamma(\frac{n-\beta}{2})\Gamma(\frac{\alpha+\beta}{2})}{  \Gamma(\frac{n-\beta-\alpha}{2})\Gamma(\frac{\beta}{2} ) }. 
\end{equation}

\begin{theorem}\label{th:asymp:ext}
Assume $0 \le s<\alpha<2,$ $n> \alpha$ and $\displaystyle 0\leq\gamma<\gamma_H(\alpha)$. Then any positive  extremal $\displaystyle u\in \h$ for $\mu_{\gamma,s,\alpha}(\R^n)$ satisfies $u\in C^1(\rn\setminus\{0\})$ and  
\begin{equation}\label{ass1}
\lim_{x\to 0}|x|^{\Bm}u(x)=\lambda_0\hbox{ and }\lim_{|x|\to \infty}|x|^{\Bp}u(x)=\lambda_\infty,
\end{equation}
where $\lambda_0,\lambda_\infty>0$ and $\Bm$ (resp., $\Bp$) is the unique solution in 
$\left(0,\frac{n-\alpha}{2}\right)$ (resp., in $\left(\frac{n-\alpha}{2},n-\alpha\right))$ of the equation $\Psi_{n,\alpha}(t)=\gamma$. In particular, there exists $C_1,C_2>0$ such that 
$$\frac{C_1}{|x|^{\Bm} + |x|^{\Bp}} \leq u(x)\leq \frac{C_2}{|x|^{\Bm} + |x|^{\Bp}} \quad \hbox{for all $x\in \rn\setminus\{0\}$.}$$

\end{theorem}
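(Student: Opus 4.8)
The plan is to derive the asymptotics from the fact that, near the singularity and near infinity, a positive extremal $u$ behaves like a solution of the homogeneous linearized equation $(-\Delta)^{\alpha/2}v - \gamma |x|^{-\alpha}v = 0$, whose radial solutions are precisely the powers $|x|^{-t}$ with $\Psi_{n,\alpha}(t)=\gamma$. The function $\Psi_{n,\alpha}$ is (up to normalization) the Fourier-type symbol of the operator $(-\Delta)^{\alpha/2}$ acting on the homogeneous function $|x|^{-t}$: one has the distributional identity $(-\Delta)^{\alpha/2}\bigl(|x|^{-t}\bigr) = \Psi_{n,\alpha}(t)\,|x|^{-t-\alpha}$ for $t$ in the admissible range $0<t<n-\alpha$. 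So the first step is to record this computation, check that $\Psi_{n,\alpha}$ is strictly convex on $(0,n-\alpha)$ with minimum value $\gamma_H(\alpha)$ attained at $t=(n-\alpha)/2$, and conclude that for $0\le\gamma<\gamma_H(\alpha)$ the equation $\Psi_{n,\alpha}(t)=\gamma$ has exactly two roots $\Bm\in(0,\frac{n-\alpha}{2})$ and $\Bp\in(\frac{n-\alpha}{2},n-\alpha)$; these are the only candidate exponents.

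The second step is regularity: from $u\in\h$ one bootstraps using the equation \eqref{Main:problem} and known regularity theory for the fractional Laplacian with Hardy potential (e.g.\ via the Caffarelli-Silvestre extension or the De Giorgi-Nash-Moser-type estimates available in the literature) to get that $u$ is bounded and $C^1$ away from the origin, and that $u>0$ everywhere on $\rn\setminus\{0\}$ (positivity coming from the strong maximum principle / Hopf lemma for \LO, or from Theorem A(3) already quoted). One also needs crude two-sided bounds to start the iteration, namely $0<u(x)\le C|x|^{-\Bm}$ near $0$ and $u(x)\le C|x|^{-\Bp}$ near infinity; the upper bounds can be obtained by comparison with the explicit supersolutions $|x|^{-\Bm}$ and $|x|^{-\Bp}$ once one knows $u$ is small on the relevant annulus, and the matching lower bounds from a Harnack inequality.

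The heart of the argument is the third step: upgrading these power bounds to the existence of the limits $\lambda_0=\lim_{x\to0}|x|^{\Bm}u(x)$ and $\lambda_\infty=\lim_{|x|\to\infty}|x|^{\Bp}u(x)$. Here the right tool is the Kelvin transform $u\mapsto |x|^{-(n-\alpha)}u(x/|x|^2)$, which is conformally natural for $(-\Delta)^{\alpha/2}$, preserves the class of solutions of \eqref{Main:problem}, and swaps the roles of $0$ and $\infty$ (and of $\Bm$ and $\Bp$, since $\Bm+\Bp=n-\alpha$). Thus it suffices to treat the behavior at the origin. There one writes the equation as $(-\Delta)^{\alpha/2}u - \gamma|x|^{-\alpha}u = f$ with $f=|x|^{-s}u^{\crits-1}$, notes that the already-established upper bound makes $f$ integrable enough near $0$ that the nonlinear term is a lower-order perturbation of the linear Hardy equation, and then invokes (or proves, by a Green-function representation / separation-of-variables argument in the Caffarelli-Silvestre extension, analyzing the indicial roots $-\Bm,-\Bp$ of the resulting ODE in the extension variable) a sharp characterization of positive solutions: any positive solution vanishing at a suitable rate is asymptotic to a multiple of the homogeneous solution $|x|^{-\Bm}$. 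This yields $|x|^{\Bm}u(x)\to\lambda_0$ with $\lambda_0\ge0$, and $\lambda_0>0$ follows from the lower bound in step two. Running the same argument on the Kelvin transform gives the behavior at infinity, and the displayed two-sided estimate is then immediate by combining the two limits with continuity and positivity of $u$ on compact subsets of $\rn\setminus\{0\}$.

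The main obstacle is step three — making rigorous the claim that a positive solution of the perturbed Hardy equation must pick out the \emph{subcritical} indicial root $-\Bm$ at the origin (and $-\Bp$ at infinity) rather than some mixture, and controlling the error from the nonlinear term $|x|^{-s}u^{\crits-1}$ uniformly. This requires a careful ODE/ballooning analysis of the extension problem (or an iteration on annuli $|x|\sim 2^{-k}$ with a contraction estimate), together with an a priori exclusion of the faster-decaying behavior via positivity. Everything else — the symbol computation for $\Psi_{n,\alpha}$, the convexity, the regularity bootstrap, and the conformal covariance of the Kelvin transform — is standard once assembled.
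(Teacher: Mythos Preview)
Your overall architecture matches the paper's: obtain crude two-sided power bounds near $0$, establish $C^1$ regularity away from the origin, upgrade to a genuine limit $|x|^{\Bm}u(x)\to\lambda_0$ via an extension/indicial-root analysis, then use the Kelvin transform to transfer the conclusion to infinity (exploiting $\Bm+\Bp=n-\alpha$). In particular your step~3 is essentially what the paper does, invoking the Fall--Felli asymptotic theorem based on the Caffarelli--Silvestre extension once the perturbation $h(x)=|x|^{-s}u^{\crits-2}$ is shown to satisfy $|h(x)|+|x|\,|\nabla h(x)|\le C|x|^{\theta-\alpha}$ with $\theta>0$.

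Two points deserve correction or sharpening. First, a slip in step~1: $\Psi_{n,\alpha}$ is \emph{not} convex with minimum $\gamma_H(\alpha)$ at the midpoint; it is unimodal with a \emph{maximum} $\gamma_H(\alpha)$ at $t=\frac{n-\alpha}{2}$ and tends to $0$ at both endpoints (see Proposition~\ref{rem:M-1}). With your description the equation $\Psi_{n,\alpha}(t)=\gamma<\gamma_H(\alpha)$ would have no solutions rather than two. This is presumably just a misstatement, but fix it.

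Second, and more substantively, your step~2 is the place where the paper's argument differs from your sketch and where your plan is thinnest. You propose to get the upper bound $u(x)\le C|x|^{-\Bm}$ near $0$ by comparison with the explicit solution $|x|^{-\Bm}$ of the homogeneous equation. But $u$ satisfies $L_{\gamma,\alpha}u=|x|^{-s}u^{\crits-1}\ge 0$, so $u$ is a \emph{super}solution of the linear problem, and $|x|^{-\Bm}$ is an exact solution; there is no obvious ordering that a maximum principle would deliver, and the nonlinear right-hand side is not a priori controlled until you already know the bound you want. The paper avoids this circularity by the \emph{ground state representation} of Frank--Lieb--Seiringer (Lemma~\ref{lem:ground:rep}): setting $v(x)=|x|^{\Bm}u(x)$ absorbs the Hardy potential into a weight and turns the problem into
\[
(-\Delta_{\Bm})^{\alpha/2}v=\frac{v^{\crits-1}}{|x|^{s+\Bm\crits}}\quad\text{in the weighted space }H^{\frac{\alpha}{2},\Bm}_0(\rn),
\]
for which a clean Moser iteration (Proposition~\ref{prop:M-1}) gives $v\in L^\infty(\rn)$, and a weighted weak Harnack inequality gives $v\ge C>0$ near $0$. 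This yields the two-sided bound $C^{-1}\le |x|^{\Bm}u(x)\le C$ directly, without any comparison argument. You should replace your comparison sketch by this mechanism (or supply a genuine argument for why comparison would work). The gradient bound needed for the Fall--Felli input is then obtained by a rescaling/blow-up argument on $u_i(x)=r_i^{\Bm}u(r_ix)$, not by further comparison.
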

\begin{remark} \rm Note that a direct consequence of Theorem  \ref{th:asymp:ext} is (\ref{est:ext:Lei})  and the corresponding control by Dipierro-Montoro-Peral-Sciunzi \cite{DMPS}.

\medskip\noindent Also note that if $\alpha = 2$, that is when  the fractional Laplacian is  the classical Laplacian, the best constant in the Hardy inequality is then $\gamma_H(2)= \frac{(n-2)^2}{4}$. The best constant associated with the Hardy-Sobolev inequality is
\begin{equation*} 
\mu_{\gamma,s,2}(\R^n):= \inf\limits_{u \in D^{1,2} (\R^n) \setminus \{0\}} \frac{ \int_{\R^n} |\nabla u|^2 dx - \gamma \int_{\R^n} \frac{|u|^2}{|x|^2}dx }{(\int_{\R^n} \frac{|u|^{2^\star(s)}}{|x|^{s}}dx)^\frac{2}{2^*(s)}},
\end{equation*}
where  $s \in [0,2),$ $ 2^\star(s):=\frac{2(n-s)}{n-2},$ $ 0 \le \gamma < \gamma_H(2)= \frac{(n-2)^2}{4}$ and $D^{1,2}(\R^n)$ is the completion of $C_c^\infty(\Omega)$ with respect to the norm $ \|u\|^2 = \int_{\R^n} |\nabla u|^2dx.$ 
The extremals for $\mu_{\gamma,s,2}(\R^n) $ are then explicit and are  given by multiples of the functions $ u_\epsilon (x) =  \epsilon^{-\frac{n-2}{2}} U(\frac{x}{\epsilon})$ for  $\epsilon>0$, where
\begin{equation*}
U(x) = \frac{1}{\left(|x|^\frac{(2-s) \sigma_-(\gamma) }{n-2} + |x|^{\frac{(2-s) \sigma_+(\gamma)}{n-2}}\right)^{\frac{n-2}{2-s}}} \quad \text{ for } \R^n \setminus \{0\},
\end{equation*}
and 
\begin{equation*}
\sigma_\pm(\gamma) = \frac{ n-2}{2} \pm\sqrt{\frac{(n-2)^2}{4} - \gamma}.
\end{equation*}
Note that the radial function $u(x)= |x|^{-\beta}$ is a solution of $ L_{\gamma,2} (u)=0$ on $\R^n \setminus \{ 0 \}$ if and only if $\beta \in \{\sigma_-(\gamma), \sigma_+(\gamma)\}.$
\end{remark}

\medskip\noindent Back to the case $0<\alpha<2$, we now turn to when $\Omega$ is a smooth bounded domain in $\R^n$ with $0$ in its interior. The best constant in the corresponding fractional Hardy-Sobolev inequality is then,
\begin{equation*} 
\mu_{\gamma,s,\alpha}(\Omega):= \inf\limits_{u \in \homega\setminus \{0\}} \frac{ \frac{C_{n,\alpha}}{2} \int_{\R^n}  \int_{\R^n} \frac{|u(x)- u(y)|^2}{|x-y|^{n+\alpha}} dxdy - \gamma \int_{\Omega} \frac{|u|^2}{|x|^{\alpha}}dx }{(\int_{\Omega} \frac{|u|^{\crits}}{|x|^{s}}dx)^\frac{2}{\crits}},
\end{equation*}
where $\homega$ is  the closure of $C^\infty_c(\Omega)$ with respect to the norm  
$$\|u\|^2_{\homega} = \frac{C_{n,\alpha}}{2} \int_{\R^n}  \int_{\R^n} \frac{|u(x)- u(y)|^2}{|x-y|^{n+\alpha}} dxdy = \int_{\R^n} |(-\Delta)^{\frac{\alpha}{4}}u|^2 dx.$$ 
In Proposition \ref{prop:val:best:cst}, we note that --just like the case when $\alpha=2$-- we have $ \mu_{\gamma,s,\alpha}(\Omega)=\mu_{\gamma,s,\alpha}(\R^n)$, 
and therefore  (\ref{Main:problem}) restricted to $\Omega$, with Dirichlet boundary condition has no extremal, unless $\Omega$ is essentially $\R^n$. 
We therefore resort to a setting popularized by Brezis-Nirenberg \cite{Brezis-Nirenberg} by considering the following boundary value problem:
\begin{equation}\label{syst:BN}
\left\{\begin{array}{rl}
\displaystyle ({-}{ \Delta})^{\frac{\alpha}{2}}u- \gamma \frac{u}{|x|^{\alpha}}= {\frac{u^{\crits-1}}{|x|^s}}+ \lambda u & \text{in }  {\Omega}\vspace{0.1cm}\\
u\geq 0 \,\,\,\,\,\,\,\,\,\,\,\,\,\,\,\,\,\,\,\,\,\,\,\,\,\,\,\,\,\,\,\,\,\,  & \text{in } \Omega ,\\
 u=0 \,\,\,\,\,\,\,\,\,\,\,\,\,\,\,\,\,\,\,\,\,\,\,\,\,\,\,\,\,\,\,\,\,\,  & \text{in } \R^n \setminus \Omega
\end{array}\right.
\end{equation}
where $0< \lambda < \lambda_1(\LO)$ and  $\lambda_1(\LO)$ is the first eigenvalue of the operator $ \LO=(-\Delta)^{\frac{\alpha}{2}}-  \frac{\gamma}{|x|^\alpha}$ with Dirichlet boundary condition, that is,
$$\lambda_1 := \lambda_1(\LO) =\inf\limits_{u \in \homega \setminus \{0\}} \frac{ \frac{C_{n,\alpha}}{2} \int_{\R^n} \int_{\R^n} \frac{|u(x) - u(y) |^2}{|x-y|^{n+\alpha}} dx dy- \gamma \int_\Omega \frac{u^2}{|x|^\alpha}}{\int_{\Omega} u^2 dx}. $$
One then considers the quantity
\begin{align*}
\mu_{\gamma,s,\alpha,\lambda}(\Omega) &= \inf\limits_{u \in \homega \setminus \{0\}} \frac{\frac{C_{n,\alpha}}{2} \int_{\R^n} \int_{\R^n} \frac{|u(x) - u(y) |^2}{|x-y|^{n+\alpha}} dx dy -\gamma \int_{\Omega} \frac{u^2}{|x|^{\alpha}}dx - \lambda \int_{\Omega} u^2 dx }{\left(\int_{\Omega} \frac{u^{\crits}}{|x|^s}dx\right)^{\frac{2}{\crits}}},
\end{align*}
and uses the fact that compactness is restored as long as  $\mu_{\gamma,s,\alpha,\lambda}(\Omega) < \mu_{\gamma,s,\alpha}(\R^n)$; see Proposition \ref{prop:exist} and also \cite{Brezis-Nirenberg} for more details. This type of condition is now classical in borderline variational problems; see Aubin \cite{Aubin} and Brezis-Nirenberg \cite{Brezis-Nirenberg}. 

\medskip\noindent When $\alpha=2$, i.e., in the case of the standard Laplacian, the minimization problem $\mu_{\gamma,s,\alpha,\lambda}(\Omega)$ has been extensively studied, see for example Lieb \cite{Lieb}, Chern-Lin \cite{Chern-Lin}, Ghoussoub-Moradifam \cite{Ghoussoub-Moradifam} and Ghoussoub-Robert \cite{Ghoussoub-Robert-2014}. The non-local case has also been the subject of several studies, but in the absence of the Hardy term, i.e., when $\gamma=0$. 
In \cite{Servadei}, Servadei proved the existence of extremals for $\mu_{0,0,\alpha,\lambda}(\R^n),$ and completed the study of problem (\ref{syst:BN}) which has been initiated by Servadei-Valdinoci \cites{Servadei-Valdinoci-low-dimension,Servadei-Valdinoci}. Recently, it has been shown by Yang-Yu \cite{Yang-Yu} that there exists a positive extremal for $\mu_{0,s,\alpha,\lambda}(\R^n)$ when $s \in [0,2).$ In this paper, we consider 
the remaining cases. 

\medskip\noindent In the spirit of Jannelli \cite{Jannelli}, who dealt with the Laplacian case, we observe that problem (\ref{syst:BN}) is deeply influenced by the value of the parameter $\gamma$.  Roughly speaking, if $\gamma$ is sufficiently small then $\mu_{\gamma,s,\alpha,\lambda}(\Omega)$ is attained for any $0<\lambda <\lambda_1$. This is essentially what was obtained by Servadei-Valdinoci \cite{Servadei-Valdinoci} when $s=\gamma=0$ and $n\geq 2\alpha$ via local arguments. This is, however not the case, when $\gamma$ is closer to $\gamma_H(\alpha)$, which amounts to dealing with low dimensions: see for instance Servadei-Valdinoci \cite{Servadei-Valdinoci-low-dimension}. In this context of low dimension, the local arguments generally fail, and it is necessary to use \emph{global} arguments via the introduction of a notion of mass in the spirit of Schoen \cite{schoen}. In the present case, and as in the work of Ghoussoub-Robert \cite{Ghoussoub-Robert-2015}, we define a notion of mass for the operator $\LO-\lambda I$, which again turns out to be critical for this non-local case. The mass is defined via the following key result. 

\begin{theorem}\label{Theorem:TheMass} Let $\Omega$ be a bounded smooth domain in $\mathbb{R}^n$ ($n > \alpha$) and consider, for $0<\alpha<2$, the boundary value problem
 \begin{equation}\label{Main problem:TheMass}
\left\{\begin{array}{rl}
(-\Delta)^{\frac{\alpha}{2}}H-\left( \frac{\gamma}{|x|^\alpha}+a(x)\right) H=0 & \text{in }  {\Omega \setminus \{0\}}\\
 H\geq 0  & \text{in }  \Omega \setminus \{0\}\\
 H=0 & \text{in } \R^n \setminus \Omega,
\end{array}\right.
\end{equation}
where $a(x)\in C^{0,\tau}(\overline{\Omega})$ for some $\tau\in(0,1)$. Assuming  the operator $(-\Delta)^{\frac{\alpha}{2}}-(\frac{\gamma}{|x|^\alpha}+a(x))$ coercive, there exists then a threshold $-\infty< \gamma_{crit}(\alpha)<\gamma_H(\alpha)$ such that for any $\gamma$ with $\gamma_{crit}(\alpha)<\gamma<\gamma_H(\alpha)$, 
there exists a unique solution to (\ref{Main problem:TheMass}) (in the sense of Definition \ref{def:punct}) $H :\Omega\to \rr$, $H\not\equiv 0$, and a constant $c \in \R$ such that 
$$H(x) = \frac{1}{|x|^{\Bp}}+ \frac{c}{|x|^{\Bm}}+ o \left( \frac{1}{|x|^{\Bm}} \right) \quad \text{ as } x \to 0.$$ 

\smallskip\noindent We define the \emph{fractional Hardy-singular internal mass} of $\Omega$ associated to the operator $\LO  $ to be $$m^\alpha_{\gamma,a}(\Omega): =c \in \R.$$
\end{theorem}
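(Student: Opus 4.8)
The plan is to construct $H$ as a perturbation of the singular function $|x|^{-\Bp}$, which is the "slow-decaying" solution of the homogeneous equation $\LO u = 0$ on $\rn\setminus\{0\}$, and then read off the mass from the asymptotic expansion at the origin. First I would recall (from the analysis behind Theorem \ref{th:asymp:ext}) that the radial powers $|x|^{-\Bm}$ and $|x|^{-\Bp}$ are precisely the solutions of $(-\Delta)^{\alpha/2}u = \gamma|x|^{-\alpha}u$ on $\rn\setminus\{0\}$, with $\Bm < \frac{n-\alpha}{2} < \Bp$ the two roots of $\Psi_{n,\alpha}(t)=\gamma$; the restriction $\gamma > \gamma_{crit}(\alpha)$ will be exactly what guarantees that $\Bp$ is large enough for $|x|^{-\Bp}$ to be integrable against suitable test functions and for the relevant energy/fractional-norm computations near $0$ to converge (this is the "low-dimension" threshold phenomenon à la Jannelli). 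A natural candidate is $H = \eta\,|x|^{-\Bp} + \phi$, where $\eta$ is a cutoff equal to $1$ near $0$ and supported in $\Omega$, and $\phi$ solves a linear Dirichlet problem on $\Omega$ with right-hand side $f := -(-\Delta)^{\alpha/2}(\eta|x|^{-\Bp}) + (\frac{\gamma}{|x|^\alpha}+a)(\eta|x|^{-\Bp})$. Because $\eta|x|^{-\Bp}$ already kills the singular part of the operator, $f$ has a milder singularity at $0$ than $|x|^{-\Bp}$ itself — precisely mild enough (given $\gamma > \gamma_{crit}$) to lie in the dual space $H^{-\alpha/2}$, or in a weighted Lebesgue space where the coercive operator $(-\Delta)^{\alpha/2} - (\frac{\gamma}{|x|^\alpha}+a)$ is invertible.

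Next I would invoke coercivity to solve the linear problem $(-\Delta)^{\alpha/2}\phi - (\frac{\gamma}{|x|^\alpha}+a)\phi = f$ in $\Omega$, $\phi = 0$ in $\rn\setminus\Omega$, by Lax–Milgram in $\homega$ (after checking $f \in H^{-\alpha/2}(\Omega)$), obtaining a unique $\phi$. The function $H = \eta|x|^{-\Bp} + \phi$ then solves $(-\Delta)^{\alpha/2}H - (\frac{\gamma}{|x|^\alpha}+a)H = 0$ in $\Omega\setminus\{0\}$ in the sense of Definition \ref{def:punct}, with $H = 0$ outside $\Omega$. Positivity ($H \geq 0$) would follow from a maximum-principle / Hopf-type argument for the coercive fractional Hardy–Schrödinger operator: $H$ is positive near $0$ because of the dominant $|x|^{-\Bp}$ term, and coercivity plus the sign of the equation propagates positivity to all of $\Omega\setminus\{0\}$ (one shows $H^- \equiv 0$ by testing against $H^-$ and using coercivity, after checking $H^- \in \homega$).

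The crucial remaining point is the asymptotic expansion $H(x) = |x|^{-\Bp} + c|x|^{-\Bm} + o(|x|^{-\Bm})$ as $x \to 0$, together with uniqueness of $H$. For the expansion, I would analyze the regularity of $\phi$ near the origin: $f$ having a controlled singularity, a representation-formula / Green's-function argument (or a blow-up / scaling analysis in the spirit of Theorem \ref{th:asymp:ext}) shows that $\phi(x) = c|x|^{-\Bm} + o(|x|^{-\Bm})$ near $0$ for some constant $c$ — the coefficient $c$ arising as the unique multiple of the faster-decaying homogeneous solution $|x|^{-\Bm}$ that is compatible with $\phi$ being globally in $\homega$ and solving the inhomogeneous equation. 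Here the asymptotics of the fractional Hardy operator developed for the $\rn$ problem transfer locally to $\Omega\setminus\{0\}$ since the equation is the same modulo the lower-order term $a(x)H$, which only contributes to the $o(|x|^{-\Bm})$ error. Uniqueness of the pair $(H,c)$: if $H_1, H_2$ are two solutions with such expansions, their difference $H_1 - H_2$ has no $|x|^{-\Bp}$ term, so it behaves like $O(|x|^{-\Bm})$ near $0$, hence (again using $\Bm < \frac{n-\alpha}{2}$ and $\gamma > \gamma_{crit}$) lies in $\homega$, solves the homogeneous coercive equation on all of $\Omega$, and must therefore vanish identically — forcing $c_1 = c_2$ as well. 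I expect the main obstacle to be the sharp asymptotic expansion of $\phi$ near $0$: controlling the precise order of the remainder requires a careful analysis of the fractional Poisson/Green kernel near the singularity (the nonlocal tail of $(-\Delta)^{\alpha/2}$ makes naive ODE-type expansions unavailable), and it is precisely this step that pins down the threshold $\gamma_{crit}(\alpha)$ as the value below which the next-order term $|x|^{-\Bm}$ would fail to dominate the remainder generated by the nonlocal interaction.
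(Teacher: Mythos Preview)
Your overall strategy matches the paper's proof almost exactly: write $H=\eta|x|^{-\Bp}+g$, compute the defect $f$, use coercivity to solve for $g\in\homega$ (the paper checks $f\in L^{2n/(n+\alpha)}(\Omega)\subset (\homega)'$, which is where $\Bp<n/2$, i.e.\ $\gamma>\gamma_{crit}(\alpha)$, enters), prove positivity by testing against $H_-$, and get uniqueness by observing that the difference of two solutions is $O(|x|^{-\Bm})$ hence lies in $\homega$ and is killed by coercivity.

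Where your sketch is vague is precisely the step you flagged: extracting the sharp expansion $g(x)=c|x|^{-\Bm}+o(|x|^{-\Bm})$. Your suggestion of a Green's-function or generic blow-up argument is not what the paper does, and it is not clear such an argument would go through directly in the nonlocal setting. The paper's device is to write $f=f_1-f_2$ with $f_1,f_2\geq 0$ and $f_i\in C^1$ near $0$ (via $\varphi_1(t)=\sqrt{1+t^2}$, $\varphi_2=\varphi_1-t$), solve separately for $g_1,g_2\geq 0$, and then exploit \emph{nonnegativity} to apply a weak Harnack inequality giving two-sided bounds $c^{-1}\leq |x|^{\Bm}g_i(x)\leq c$. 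With these bounds in hand, the equation for $g_i$ is rewritten as $(-\Delta)^{\alpha/2}g_i-\gamma|x|^{-\alpha}g_i=h(x)g_i$ with $|h(x)|+|x||\nabla h(x)|\leq C|x|^{\tau-\alpha}$ for $\tau=\alpha-(\Bp-\Bm)>0$, and then an extension of the Fall--Felli unique-continuation/asymptotics result (via the Caffarelli--Silvestre extension and a spectral analysis on the half-sphere) yields that $|x|^{\Bm}g_i(x)$ has a finite limit at $0$. The positive decomposition is the key idea you are missing: without a sign on $g$ one cannot invoke Harnack, and without the two-sided control one cannot feed into the Fall--Felli machinery.
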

\noindent We then prove the following existence result, which complements
those in \cite{Servadei} and \cite{Yang-Yu} to the case when $\gamma>0$. 
\begin{theorem}\label{th:1}
Let $\Omega$ be a smooth bounded domain in $\R^n (n > \alpha)$ such that $0 \in \Omega,$ and let  $0 \le s < \alpha,$ $0 \le \gamma < \gamma_H(\alpha)$.Then, 
there exist extremals for $\mu_{\gamma,s,\alpha,\lambda}(\Omega)$  under one of the following two conditions:
\begin{enumerate}
\item $0 \le \gamma \le \gamma_{crit}(\alpha)$ and $0<\lambda<\lambda_1(\LO)$,
 \item $\gamma_{crit}(\alpha)<\gamma <\gamma_H(\alpha)$, $0<\lambda<\lambda_1(\LO)$ and  $m^\alpha_{\gamma,\lambda}(\Omega)>0$.
\end{enumerate}
\end{theorem}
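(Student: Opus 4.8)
The plan is to follow the Brezis-Nirenberg strategy as adapted by Ghoussoub-Robert: by Proposition \ref{prop:exist} it suffices to prove the strict inequality $\mu_{\gamma,s,\alpha,\lambda}(\Omega) < \mu_{\gamma,s,\alpha}(\R^n)$. To do this, I would construct an explicit family of test functions $u_\epsilon \in \homega$ obtained by truncating and suitably correcting the extremals of $\mu_{\gamma,s,\alpha}(\R^n)$ from Theorem \ref{th:asymp:ext}, compute the quotient $\mu_{\gamma,s,\alpha,\lambda}(\Omega)[u_\epsilon]$ up to the relevant order in $\epsilon$, and isolate the sign-determining term. Fix a positive extremal $U$ for $\mu_{\gamma,s,\alpha}(\R^n)$, normalized so that $U(x) \sim |x|^{-\bm}$ near $0$ and $U(x) \sim |x|^{-\bp}$ at infinity, and for $\epsilon>0$ set $U_\epsilon(x) = \epsilon^{-\frac{n-\alpha}{2}} U(x/\epsilon)$, which scales the quotient invariantly and concentrates at $0$. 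The test function should then be of the form $u_\epsilon = \eta U_\epsilon + $ (a corrector built from the mass-carrying solution $H$ of Theorem \ref{Theorem:TheMass} with $a \equiv -\lambda$), where $\eta$ is a cutoff near $0$; the role of the corrector is precisely to cancel the slowly-decaying tail of $U_\epsilon$ against the boundary condition, which is where the mass $m^\alpha_{\gamma,\lambda}(\Omega)$ enters.

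The two cases reflect the competition between two $\epsilon$-expansions. The term $-\lambda\int_\Omega u_\epsilon^2\,dx$ is the Brezis-Nirenberg contribution, and its order in $\epsilon$ depends on whether $\int_\Omega U_\epsilon^2\,dx$ converges or diverges, i.e. on how $\bp$ compares to $\frac{n}{2}$; equivalently on whether $\gamma$ is above or below the threshold $\gamma_{crit}(\alpha)$ at which the relevant tail integral becomes borderline. In Case (1), $\gamma \le \gamma_{crit}(\alpha)$, the $-\lambda\int u_\epsilon^2$ term dominates all error terms, is strictly negative, and for any $0<\lambda<\lambda_1$ forces $\mu_{\gamma,s,\alpha,\lambda}(\Omega)[u_\epsilon] < \mu_{\gamma,s,\alpha}(\R^n)$ for $\epsilon$ small — this is the purely local argument. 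In Case (2), $\gamma_{crit}(\alpha) < \gamma < \gamma_H(\alpha)$, this term is of lower order (or comparable) and the leading correction in the numerator is instead of the form $-c(n,\alpha)\, m^\alpha_{\gamma,\lambda}(\Omega)\, \epsilon^{\bp-\bm} + o(\epsilon^{\bp-\bm})$ for a positive geometric constant $c(n,\alpha)$; here I would use the precise asymptotics \eqref{ass1} for $U$ together with the definition of the mass $c = m^\alpha_{\gamma,\lambda}(\Omega)$ to evaluate the cross-terms, most naturally via the bilinear form $\frac{C_{n,\alpha}}{2}\iint \frac{(u(x)-u(y))(v(x)-v(y))}{|x-y|^{n+\alpha}}$ and an integration-by-parts/Pohozaev-type identity against $H$. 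The hypothesis $m^\alpha_{\gamma,\lambda}(\Omega)>0$ then yields the strict inequality.

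The main obstacle, and the bulk of the work, is the estimate in Case (2): pinning down that the mass appears with the correct sign and order, and that all remaining error terms — from the truncation $\eta$, from the nonlinear Hardy-Sobolev denominator $\left(\int_\Omega \frac{u_\epsilon^{\crits}}{|x|^s}\,dx\right)^{2/\crits}$, and from the singular Hardy term $\gamma\int_\Omega \frac{u_\epsilon^2}{|x|^\alpha}\,dx$ — are genuinely $o(\epsilon^{\bp-\bm})$. This requires sharp control of $U_\epsilon$ near $0$ and near $\partial\Omega$, handling the nonlocality of $(-\Delta)^{\alpha/2}$ (so that cutting off $U_\epsilon$ produces controllable tail contributions rather than local boundary terms), and a careful definition of $\gamma_{crit}(\alpha)$ as exactly the value of $\gamma$ separating the two regimes of the $\epsilon$-expansion — i.e. where $\bp - \bm$ crosses the exponent governing $\int_\Omega U_\epsilon^2\,dx$. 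Once the expansion
$$\mu_{\gamma,s,\alpha,\lambda}(\Omega)[u_\epsilon] = \mu_{\gamma,s,\alpha}(\R^n)\Big(1 - c(n,\alpha)\,m^\alpha_{\gamma,\lambda}(\Omega)\,\epsilon^{\bp-\bm} + o(\epsilon^{\bp-\bm})\Big)$$
is established with $c(n,\alpha)>0$, both cases close and Proposition \ref{prop:exist} delivers the extremal.
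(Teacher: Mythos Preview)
Your proposal is correct and follows essentially the same strategy as the paper: reduce to the strict inequality via Proposition~\ref{prop:exist}, then in the regime $\gamma\le\gamma_{crit}(\alpha)$ use the truncated extremal $\eta U_\epsilon$ to obtain the expansions \eqref{dev:souscrit}--\eqref{dev:crit}, and in the regime $\gamma>\gamma_{crit}(\alpha)$ use the corrected test function $T_\epsilon=\eta U_\epsilon+\epsilon^{(\bp-\bm)/2}g$ (with $g$ the regular part of $H$, and $a\equiv\lambda$, not $-\lambda$) to obtain \eqref{dev:JTE:2}. One point where the paper's execution differs from your sketch: the mass is \emph{not} extracted by an integration-by-parts or Pohozaev identity---the authors note explicitly that this fails in the nonlocal setting---but rather by interpreting the would-be boundary term as a limit of domain integrals against cutoffs $\chi_k$ shrinking to the origin, which is what makes the bilinear form $B_\eta$ of \eqref{def:B:eta} computable and produces the constant $K>0$ multiplying $m^\alpha_{\gamma,\lambda}(\Omega)$.
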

The idea of studying how critical behavior occurs while varying a parameter $\gamma$ on which an operator $\LO$ continuously depends goes back to \cite{Jannelli}, who considered the classical Hardy-Schr\"odinger operator $ L_{\gamma,2}:= -\Delta -\frac{\gamma}{|x|^2}$, and showed the existence of extremals for any $\lambda >0$ provided $0\leq \gamma \leq  \frac{(n-2)^2}{4}-1$. In this case, $\gamma_{crit}(2)= \frac{(n-2)^2}{4}-1$. The definition of the mass and the counterpart of Theorem \ref{th:1} for the operator $ L_{\gamma,2}$ was established by Ghoussoub-Robert \cite{Ghoussoub-Robert-2015}.  The complete picture can be described as follows. \\

\begin{center}
\begin{tabular}{ | m{3.3cm} | m{1.8 cm}| m{1.5cm} | m{2.5cm}|  m{1.5cm}|}
\hline
 Hardy term & Dimension & Singularity & Analytic. cond. & Extremals\\
\hline
$ \quad  0 \le  \gamma \le \gamma_{crit}(\alpha)$ & $n \ge 2 \alpha $ & $s\geq 0$ & \qquad $\lambda >0$ & Yes\\ 
\hline
$\gamma_{crit}(\alpha)< \gamma < \gamma_H(\alpha)$& $n \ge 2 \alpha$ & $s\geq 0$& \quad $m^\alpha_{\gamma,\lambda}(\Omega)>0$ & Yes\\ 
\hline
$0 \le  \gamma < \gamma_H(\alpha)$& $ \alpha < n <  2 \alpha$ & $s\geq 0$& \quad $m^\alpha_{\gamma,\lambda}(\Omega)>0$ & Yes\\ 
\hline\\
\end{tabular}
\end{center}

\medskip\noindent Even though the constructions and the methods are heavily inspired by the work of Ghoussoub-Robert \cite{Ghoussoub-Robert-2015} on the Laplacian case, the fact that the operator is nonlocal here induces several fundamental difficulties that had to be overcome. First, the construction of the mass in the local case uses a precise classification of singularities for solutions of corresponding elliptic equations, that follows from the comparison principle stating that behavior in a domain is governed by the behavior on its boundary. In the nonlocal case, this fails since one needs to consider the whole complement of the domain, and not only its boundary. We were able to bypass this difficulty by using sharp regularity results available for the fractional Laplacian. Another difficulty we had to face came from the test-functions estimates in the presence of the mass. In the classical local case, one estimates the associated functional on a singular test-function, counting on the mass to appear after suitable integrations by parts. In the nonlocal context, this strategy fails. We overcome this difficulty by looking at the integral on the boundary of a domain as a limit of integrals on the domain after multiplying by a cut-off functions whose support converge to the boundary. This process is well-defined in the nonlocal context and proves to be efficient in tackling the estimates involving the mass.

\section{The fractional Hardy-Schr\"odinger operator \LO  \ on $\R^n$ }
In this section, we study the local behavior of solutions of the fractional Hardy-Schr\"odinger operator $ \LO:= (-\Delta)^{\frac{\alpha}{2}}-\frac{\gamma}{|x|^\alpha} $ on  $\R^n$. The most basic solutions for $\LO u = 0$ on $\R^n$ are of the form $u(x) = |x|^{- \beta}$, and a straightforward computation  yields  (see \cite{Frank-Lieb-Seiringer})
$$(-\Delta)^\frac{\alpha}{2} |x|^{-\beta} = \Psi_{n,\alpha}(\beta)  |x|^{-\beta-\alpha}  \text{ in the sense of } \mathcal{S}'(\R^n)\hbox{ when }0<\beta<n-\alpha, $$ 
where
\begin{equation}\label{Definition of Psi}
\Psi_{n,\alpha}(\beta) := 2^\alpha \frac{\Gamma(\frac{n-\beta}{2})\Gamma(\frac{\alpha+\beta}{2})}{  \Gamma(\frac{n-\beta-\alpha}{2})\Gamma(\frac{\beta}{2} ) } . 
\end{equation}
Recall that  the best constant in the fractional Hardy inequality
\begin{equation*} 
\gamma_H(\alpha):=\mu_{0,\alpha,\alpha}(\R^n) = \inf\left\{\frac{ \int_{\R^n} |({-}{ \Delta})^{\frac{\alpha}{4}}u|^2 dx}{ \int_{\R^n}\frac{|u|^2}{|x|^{\alpha}} dx}; \,\,  u \in \h \setminus \{0\}\right\}
\end{equation*}
 is never achieved (see Fall \cite{Fall}), is equal to  $\Psi_{n,\alpha}(\frac{n-\alpha}{2})=2^\alpha \frac{\Gamma^2(\frac{n+\alpha}{4})}{\Gamma^2(\frac{n-\alpha}{4})}$ (see Herbst and Yafaev \cites{Herbst,Yafaev}), and it converges to the best classical Hardy constant $\gamma_H(2)=\frac{(n-2)^2}{4}$ whenever ${\alpha \to 2}$. 

\medskip\noindent We summarize some properties of the function $\beta\mapsto\Psi_{n,\alpha}(\beta)$ which will  be used freely in this section. They are essentially consequences from known properties of Gamma function $\Gamma$.

\begin{proposition}[Frank-Lieb-Seiringer \cite{Frank-Lieb-Seiringer}]\label{rem:M-1} The following properties hold:

\begin{enumerate}
\item $\Psi_{n,\alpha}(\beta)>0$ for all $\beta\in(0,n-\alpha)$.

\item The graph of $\Psi_{n,\alpha}$ in $(0,n-\alpha)$ is symmetric with respect to $ \frac{n-\alpha}{2}$, that is, 
$$\Psi_{n,\alpha}(\beta)=\Psi_{n,\alpha}(n-\alpha-\beta) \hbox{ for all $\beta\in(0,n-\alpha)$.}
$$

\item $\Psi_{n,\alpha}$ is strictly increasing in $(0,\frac{n-\alpha}{2})$, and  strictly decreasing in $ (\frac{n-\alpha}{2},n-\alpha)$.

\item $\displaystyle \Psi_{n,\alpha}\left(\frac{n-\alpha}{2}\right)=\gamma_H(\alpha).$

\item $\displaystyle \lim_{\beta\searrow0}\ \Psi_{n,\alpha}(\beta)=\lim_{\beta\nearrow n-\alpha}\ \Psi_{n,\alpha}(\beta)=0$.

\item For any $\gamma\in(0,\gamma_H(\alpha))$, there exists a unique $\Bm \in (0,\frac{n-\alpha}{2})$ such that $\Psi_{n,\alpha}(\Bm)=\gamma$.

\item For any $0<\beta\leq n-\alpha$, we have that
\begin{equation}\label{lap:x:theta:dist}
(-\Delta)^{\frac{\alpha}{2}}|x|^{-\beta}=\Psi_{n,\alpha}(\beta)|x|^{-\alpha-\beta}+c_{n,\alpha}{\bf 1}_{\{\beta=n-\alpha\}}\delta_0\hbox{ in }\mathcal{S}'(\R^n),
\end{equation}
where we define $\Psi_{n,\alpha}(n-\alpha)=0$ and $c_{n,\alpha}>0$ is a constant.
\end{enumerate}

\end{proposition}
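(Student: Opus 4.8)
The plan is to deduce items (1)--(6) from elementary properties of the Euler Gamma function and its logarithmic derivative $\psi=\Gamma'/\Gamma$ (the digamma function), and to obtain the distributional identity (7) from the Fourier representation of homogeneous tempered distributions.

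\textbf{Items (1), (2), (4), (5).} For $\beta\in(0,n-\alpha)$ the four arguments $\frac{n-\beta}{2},\ \frac{\alpha+\beta}{2},\ \frac{n-\beta-\alpha}{2},\ \frac{\beta}{2}$ all lie in $(0,\infty)$, where $\Gamma>0$; hence $\Psi_{n,\alpha}(\beta)>0$, which is (1). For (2), the substitution $\beta\mapsto n-\alpha-\beta$ exchanges $\frac{n-\beta}{2}$ with $\frac{\alpha+\beta}{2}$ in the numerator and $\frac{n-\beta-\alpha}{2}$ with $\frac{\beta}{2}$ in the denominator, leaving $\Psi_{n,\alpha}$ unchanged. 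Item (4) is the substitution $\beta=\frac{n-\alpha}{2}$, which sends both numerator arguments to $\frac{n+\alpha}{4}$ and both denominator arguments to $\frac{n-\alpha}{4}$, giving $\Psi_{n,\alpha}(\frac{n-\alpha}{2})=2^\alpha\Gamma^2(\frac{n+\alpha}{4})/\Gamma^2(\frac{n-\alpha}{4})=\gamma_H(\alpha)$. For (5), as $\beta\searrow0$ the factor $\Gamma(\frac{\beta}{2})\to+\infty$ (the pole of $\Gamma$ at $0$), while $\Gamma(\frac{n-\beta}{2}),\ \Gamma(\frac{\alpha+\beta}{2}),\ \Gamma(\frac{n-\beta-\alpha}{2})$ tend to the finite positive numbers $\Gamma(\frac{n}{2}),\ \Gamma(\frac{\alpha}{2}),\ \Gamma(\frac{n-\alpha}{2})$; hence $\Psi_{n,\alpha}(\beta)\to0$, and the limit at $n-\alpha$ follows by (2).

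\textbf{Items (3) and (6).} By (2) it suffices to show $\Psi_{n,\alpha}$ is strictly increasing on $(0,\frac{n-\alpha}{2})$. Differentiating $f:=\log\Psi_{n,\alpha}$ gives
$$2f'(\beta)=\Big(\psi\big(\tfrac{\beta}{2}+\tfrac{\alpha}{2}\big)-\psi\big(\tfrac{\beta}{2}\big)\Big)-\Big(\psi\big(\tfrac{n-\beta-\alpha}{2}+\tfrac{\alpha}{2}\big)-\psi\big(\tfrac{n-\beta-\alpha}{2}\big)\Big).$$
The series identity $\psi(x+a)-\psi(x)=\sum_{k\geq0}\frac{a}{(k+x)(k+x+a)}$ shows that $x\mapsto\psi(x+a)-\psi(x)$ is strictly decreasing on $(0,\infty)$ for every $a>0$; since $\frac{\beta}{2}<\frac{n-\beta-\alpha}{2}$ precisely when $\beta<\frac{n-\alpha}{2}$, this forces $f'(\beta)>0$ there, proving (3). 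Item (6) is then immediate: $\Psi_{n,\alpha}$ is continuous and strictly increasing on $(0,\frac{n-\alpha}{2}]$, with $\lim_{\beta\searrow0}\Psi_{n,\alpha}(\beta)=0$ and $\Psi_{n,\alpha}(\frac{n-\alpha}{2})=\gamma_H(\alpha)$, so for each $\gamma\in(0,\gamma_H(\alpha))$ the intermediate value theorem yields a unique $\beta_-(\gamma)\in(0,\frac{n-\alpha}{2})$ with $\Psi_{n,\alpha}(\beta_-(\gamma))=\gamma$, and then $\beta_+(\gamma):=n-\alpha-\beta_-(\gamma)$ is the unique solution in $(\frac{n-\alpha}{2},n-\alpha)$ by (2).

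\textbf{Item (7).} For $0<\beta<n$, $|x|^{-\beta}$ is a tempered distribution whose Fourier transform is a positive constant times $|\xi|^{-(n-\beta)}$; applying $(-\Delta)^{\alpha/2}$ is multiplication by $|2\pi\xi|^{\alpha}$, producing a constant times $|\xi|^{-(n-\alpha-\beta)}$. If $0<\beta<n-\alpha$ this exponent stays in the admissible range $(0,n)$, so the inverse transform returns a constant times $|x|^{-(\alpha+\beta)}$; tracking the Gamma factors, and using that the powers of $\pi$ cancel while $(2\pi)^\alpha\pi^{-\alpha}=2^\alpha$, one recovers exactly the constant $\Psi_{n,\alpha}(\beta)$ from the formula quoted just before the statement. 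If $\beta=n-\alpha$ the exponent degenerates: $|2\pi\xi|^{\alpha}\cdot(\text{const})\,|\xi|^{-\alpha}$ is a nonzero constant, whose inverse Fourier transform is $c_{n,\alpha}\delta_0$ with $c_{n,\alpha}>0$; equivalently $|x|^{-(n-\alpha)}$ is a positive multiple of the Riesz kernel (the fundamental solution of $(-\Delta)^{\alpha/2}$), and $c_{n,\alpha}$ is the reciprocal of that multiple. Setting $\Psi_{n,\alpha}(n-\alpha):=0$ is then consistent with (2) and (5). I expect the endpoint case $\beta=n-\alpha$ of (7) to be the only genuinely delicate point: the Dirac mass is invisible at the level of the pointwise identity and arises because $\Gamma(\frac{n-\beta-\alpha}{2})$ develops a pole as $\beta\to n-\alpha$, so one must argue on the Fourier side (or invoke the Riesz potential normalization) rather than merely substitute into $\Psi_{n,\alpha}$.
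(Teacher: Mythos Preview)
Your proof is correct. The paper does not actually prove this proposition: it attributes the result to Frank--Lieb--Seiringer and remarks only that the properties ``are essentially consequences from known properties of Gamma function $\Gamma$,'' leaving the verification to the reader. Your argument supplies precisely those details, and the approach---direct manipulation of the Gamma factors for (1), (2), (4), (5), monotonicity via the digamma series for (3), and the Fourier/Riesz-potential computation for (7)---is the natural one and matches what the cited reference does. The only minor comment is that in (7) you could be slightly more explicit about the normalization constant $c_{n,\alpha}$ (it is the reciprocal of the Riesz kernel constant, i.e., the constant making $|x|^{-(n-\alpha)}$ the fundamental solution of $(-\Delta)^{\alpha/2}$), but since the paper itself leaves $c_{n,\alpha}$ unspecified beyond positivity, this is not a gap.
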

In particular, for $0<\beta<n-\alpha$,
\begin{equation*}
\left((-\Delta)^\frac{\alpha}{2}- \frac{\gamma}{|x|^\alpha} \right)|x|^{-\beta} =0\text{ in } \mathcal{S}'(\R^n)\hbox{ if and only if }\beta \in \{\Bp , \Bm\},
\end{equation*}
where $0<\Bm<\frac{n-\alpha}{2}$ is as in Proposition \ref{rem:M-1} and $\Bp:=n-\alpha-\Bm\in \left(\frac{n-\alpha}{2},n-\alpha\right)$. In particular, it follows from Proposition \ref{rem:M-1} that $\Bm,\Bp$ are the only solutions to $\Psi_{n,\alpha}(\beta) =\gamma$ in $(0,n-\alpha)$.
Since $0<\Bm<\frac{n-\alpha}{2}<\Bp<n-\alpha$, we get that $x\mapsto |x|^{-\Bm}$ is locally in $\h$. It is the``small" or variational solution, while $x\mapsto |x|^{-\Bp}$ is the``large" or singular solution. We extend $\Bm,\Bp$ to the whole interval $[0,\gamma_H(\alpha)]$ by defining 
\begin{equation}\label{def:b:0}
\beta_-(0):=0,\quad \beta_+(0):=n-\alpha, \quad  {\rm and}\quad \beta_-(\gamma_H(\alpha))=\beta_+(\gamma_H(\alpha))=\frac{n-\alpha}{2},
\end{equation}
which is consistant with Proposition \ref{rem:M-1}.

\smallskip\noindent We now proceed to define a critical threshold $\gamma_{crit}(\alpha)$ as follows. Assuming first that $n > 2 \alpha$, then $\frac{n-\alpha}{2}<\frac{n}{2}<n-\alpha$ and therefore, by Proposition \ref{rem:M-1}, there exists $\bar{\gamma}(\alpha) \in (0, \gamma_H(\alpha))$ such that 
\begin{equation*}
\left\{\begin{array}{rll}
\displaystyle \frac{n}{2} < \Bp < n-\alpha &\text{if}  \ \gamma \in (0,\bar{\gamma}(\alpha) )\vspace{0.1cm}\\
\displaystyle \Bp =  \frac{n}{2} \,\,\,\,\,\,\,\,\,\,\,&\text{if} \ \gamma = \bar{\gamma} (\alpha)\vspace{0.1cm}\\
 \displaystyle \frac{n- \alpha}{2} < \Bp < \frac{n}{2}\,\,\,\,\,\,\,\,\,\,\,\, &\text{if} \ \gamma \in (\bar{\gamma}(\alpha) , \gamma_H(\alpha)).
\end{array}\right.
\end{equation*}
 \noindent We then set 
\begin{equation}\label{def:gamma:crit}
\gamma_{crit}(\alpha):= \left\{\begin{array}{rll}
\displaystyle  \bar{\gamma} (\alpha) &\text{if } \ n > 2 \alpha \vspace{0.1cm}\\
 \displaystyle 0 \hfill  &\text{if } n =2\alpha\vspace{0.1cm}\\
\displaystyle -1 \hfill  &\text{if } n <2\alpha.
\end{array}\right.
\end{equation}
One can easily check that for $\gamma\in [0,\gamma_H(\alpha))$, we have that
\begin{equation*}
\gamma \in (\gamma_{crit}(\alpha), \gamma_H(\alpha)) \quad \Leftrightarrow \quad \Bp<\frac{n}{2}\quad \Leftrightarrow \; x\mapsto |x|^{-\Bp}\in L^2_{loc}(\rn).
\end{equation*}

\medskip\noindent We now introduce the following terminology in defining a notion of solution on a punctured domain. 

\begin{definition}\label{def:punct} Let $\Omega$ be a smooth domain (not necessarily bounded) of $\rn$, $n>1$. Let $f$ be a function in $ L^1_{loc}(\Omega\setminus\{0\})$. We say that $u: \Omega\to \rr$ is a solution to 
\begin{equation*}
\left\{\begin{array}{rl}
(-\Delta)^{\frac{\alpha}{2}}u=f & \text{in }  {\Omega \setminus \{0\}}\\
 u=0 & \text{in } \partial \Omega,
\end{array}\right.
\end{equation*}
provided
\begin{enumerate} 
\item For any $\eta\in C^\infty_c(\rn\setminus\{0\})$, we have that $\eta u\in \homega$;
\item $\int_{\Omega}\frac{|u(x)|}{1+|x|^{n+\alpha}}\, dx<\infty;$
\item For any $\varphi\in C^\infty_c(\Omega\setminus\{0\})$, we have that
\begin{equation*}
\frac{C_{n,\alpha}}{2}\int_{\mathbb{R}^n}\int_{\mathbb{R}^n} \frac{(u(x)-u(y))(\varphi(x)-\varphi(y))}{|x-y|^{n+\alpha}}\ dxdy=\int_{\mathbb{R}^n}f(x)\varphi(x)\, dx.
\end{equation*}
\end{enumerate}
\end{definition}
Note that the third condition is consistent thanks to the two preceding it. If $\Omega$ is bounded, the second hypothesis rewrites as $u\in L^1(\Omega)$.

\section{Profile of solutions}

Throughout this paper, we shall frequently use the following fact:
\begin{proposition}\label{prop:carac:h} A
measurable function  $u: \rn\to \rr$ belongs to $\h$ if and only if  $\int_{\rn}|u|^{2^\star_\alpha(0)}\, dx<+\infty$ and $\int_{(\rn)^2}\frac{|u(x)-u(y)|^2}{|x-y|^{n+\alpha}}\, dxdy<+\infty$. 
\end{proposition}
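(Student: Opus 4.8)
The statement to prove is Proposition~\ref{prop:carac:h}: a measurable $u:\rn\to\rr$ lies in $\h$ if and only if $u\in L^{2^\star_\alpha(0)}(\rn)$ and the Gagliardo seminorm $[u]_\alpha^2:=\int_{(\rn)^2}\frac{|u(x)-u(y)|^2}{|x-y|^{n+\alpha}}\,dxdy$ is finite. Recall $2^\star_\alpha(0)=\frac{2n}{n-\alpha}$. The ``only if'' direction is essentially just the continuity of the embedding together with the identity relating $[u]_\alpha^2$ to $\|u\|_{\h}^2$ (already recorded in the excerpt via Di~Nezza--Palatucci--Valdinoci): if $u\in\h$, it is a strong $\homega$-type limit of $C_c^\infty(\rn)$ functions, the seminorm is finite by the quoted identity, and the fractional Sobolev inequality $\|u\|_{2^\star_\alpha(0)}^2\le C[u]_\alpha^2$ gives the $L^{2^\star_\alpha(0)}$ bound. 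The real content is the ``if'' direction: given a function satisfying the two integrability conditions, one must produce a sequence $u_k\in C_c^\infty(\rn)$ with $\|u_k-u\|_{\h}\to 0$, i.e.\ with $[u_k-u]_\alpha\to 0$ (the two norms agreeing up to the constant $C_{n,\alpha}/2$).

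\textbf{Key steps for the ``if'' direction.} First I would \emph{truncate and cut off}: set $u_R:=\chi_R\,T_k(u)$ where $T_k$ is truncation at height $k$ and $\chi_R$ is a smooth cutoff equal to $1$ on $B_R$, supported in $B_{2R}$, with $|\nabla\chi_R|\le C/R$. A standard computation shows $[\,\chi_R v\,]_\alpha \lesssim [v]_\alpha + (\text{terms controlled by }\|v\|_{L^{2^\star_\alpha(0)}}$ and $|\nabla\chi_R|)$; since $\alpha<2<n$ one checks the cutoff contribution $\iint \frac{|\chi_R(x)-\chi_R(y)|^2}{|x-y|^{n+\alpha}}|v(y)|^2\,dxdy$ is finite and, using dominated convergence as $R\to\infty$ together with $v\in L^{2^\star_\alpha(0)}$, that $[\,\chi_R v - v\,]_\alpha\to 0$. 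The truncation step $T_k(u)\to u$ in seminorm as $k\to\infty$ follows from $|T_k(u)(x)-T_k(u)(y)|\le|u(x)-u(y)|$ pointwise and dominated convergence, using that $[u]_\alpha<\infty$; and $T_k(u)\to u$ in $L^{2^\star_\alpha(0)}$ likewise. So it suffices to approximate a \emph{bounded, compactly supported} $v$ with finite Gagliardo seminorm by $C_c^\infty$ functions. Second, \emph{mollify}: put $v_\epsilon:=v*\rho_\epsilon$ with a standard mollifier; these are smooth, uniformly compactly supported, and one shows $[v_\epsilon - v]_\alpha\to 0$ by writing the seminorm of $v_\epsilon-v$ as an average over translations, $\int\rho_\epsilon(z)\,\tau_z v - v$, and applying Minkowski's integral inequality in the Gagliardo norm together with continuity of translations $z\mapsto \tau_z v$ in the Gagliardo seminorm (which itself reduces, via the same averaging, to $[v]_\alpha<\infty$ and dominated convergence). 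Combining, $v_\epsilon\to v$ in $\h$, and chaining the three approximations $u\leftarrow\chi_R T_k(u)\leftarrow (\chi_R T_k u)_\epsilon$ with a diagonal argument yields the desired $C_c^\infty$ sequence; hence $u\in\h$.

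\textbf{Main obstacle.} The step I expect to be the crux is the cutoff estimate, specifically showing $[\,\chi_R v - v\,]_\alpha\to 0$ for $v$ with only $v\in L^{2^\star_\alpha(0)}$ and $[v]_\alpha<\infty$ available. One must split the double integral into the region where $|x-y|$ is small — handled by $[v]_\alpha<\infty$ and absolute continuity of the integral — and the region where $|x-y|\ge 1$, where the kernel $|x-y|^{-n-\alpha}$ is integrable in one variable and the "cross term" $\iint_{|x-y|\ge 1}\frac{|1-\chi_R(x)|^2|v(y)|^2 + \dots}{|x-y|^{n+\alpha}}$ must be bounded using $\|v\|_{L^{2^\star_\alpha(0)}}$ (or $L^2_{loc}$, since $v$ is compactly supported) and sent to $0$ as $R\to\infty$ by dominated convergence. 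Getting the exponents to match here — this is precisely where $2^\star_\alpha(0)=\frac{2n}{n-\alpha}$ and the restriction $\alpha<n$ enter — is the only genuinely delicate bookkeeping; everything else is dominated convergence plus Minkowski's inequality. I would cite Di~Nezza--Palatucci--Valdinoci \cite{Hitchhikers-guide} for the density of $C_c^\infty(\rn)$ in the Gagliardo space and the Sobolev inequality, and present the argument above as the verification that their space coincides with $\h$ as defined here.
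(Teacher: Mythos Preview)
Your proposal is correct and follows essentially the same route as the paper's own proof, which is only a two-line sketch: approximate by a compactly supported function, then mollify, ``the rest is classical and the details are left to the reader.'' You supply considerably more detail than the paper does, and you correctly isolate the cutoff estimate $[\chi_R v-v]_\alpha\to 0$ as the one place requiring genuine care; the additional height-truncation $T_k$ you insert is not in the paper's sketch but is harmless (and arguably makes the mollification step cleaner). One small slip: in your ``Main obstacle'' paragraph you write ``(or $L^2_{loc}$, since $v$ is compactly supported)'', but at that stage $v=T_k(u)$ has not yet been cut off and is \emph{not} compactly supported---the $L^{2^\star_\alpha(0)}$ control is what you actually have, and your remark that ``this is precisely where $2^\star_\alpha(0)=\frac{2n}{n-\alpha}$ enters'' is the right instinct.
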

The proof consists of approximating $u$ by a compactly supported function satisfying the same properties. Then, by convoluting with a smooth mollifier,  this approximation is achieved by a smooth compactly supported function. The rest is classical and the details are left to the reader.

\medskip\noindent To prove Theorem \ref{th:asymp:ext}, we shall use a similar argument as in Dipierro-Montoro-Peral-Sciunzi \cite{DMPS}. The main idea is to transform  problem \eqref{Main:problem} into a different nonlocal problem in a weighted fractional space by using a representation introduced in Frank-Lieb-Seiringer \cite{Frank-Lieb-Seiringer}.

\begin{lemma}[Ground State Representation \cite{Frank-Lieb-Seiringer}; Formula (4.3)]\label{lem:ground:rep}
Assume $0<\alpha<2, 
$ $ n > \alpha$, $ 0<\beta< \frac{n-\alpha}{2}$. For $u\in C_c^\infty(\mathbb{R}^n \backslash \{0\})$, we let $v_\beta(x)=|x|^\beta u(x)$ in $\mathbb{R}^n\backslash\{0\}.$ Then,
\begin{eqnarray*}
\frac{C_{n,\alpha}}{2}\int_{\mathbb{R}^n}\int_{\mathbb{R}^n} \frac{|u(x)-u(y)|^2}{|x-y|^{n+\alpha}}\ dxdy&=&\Psi_{n,\alpha}(\beta)\int_{\mathbb{R}^n} \frac{u^2(x)}{|x|^\alpha}\ dx\\
&&+\frac{C_{n,\alpha}}{2}\int_{\mathbb{R}^n}\int_{\mathbb{R}^n} \frac{|v_\beta(x)-v_\beta(y)|^2}{|x-y|^{n+\alpha}}\ \frac{dx}{|x|^\beta}\frac{dy}{|y|^\beta}.
\end{eqnarray*}
\end{lemma}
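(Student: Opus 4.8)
The plan is to derive the identity from the distributional formula $(-\Delta)^{\frac{\alpha}{2}}|x|^{-\beta}=\Psi_{n,\alpha}(\beta)|x|^{-\alpha-\beta}$ of Proposition~\ref{rem:M-1}(7), which applies here since $0<\beta<\frac{n-\alpha}{2}<n-\alpha$, after rewriting the Gagliardo energy of $u$ in terms of $v_\beta$. The algebraic core is the elementary identity, valid for all real $a,b,\phi,\psi$:
\[
(a\phi-b\psi)^2=ab\,(\phi-\psi)^2+a(a-b)\phi^2+b(b-a)\psi^2 .
\]
Taking $a=|x|^{-\beta}$, $b=|y|^{-\beta}$, $\phi=v_\beta(x)$, $\psi=v_\beta(y)$, and using $u=|\cdot|^{-\beta}v_\beta$, one obtains, for $x,y\in\mathbb{R}^n\setminus\{0\}$, the pointwise identity
\begin{multline*}
|u(x)-u(y)|^2=\frac{|v_\beta(x)-v_\beta(y)|^2}{|x|^\beta|y|^\beta}\\
+|x|^{-\beta}\bigl(|x|^{-\beta}-|y|^{-\beta}\bigr)v_\beta(x)^2+|y|^{-\beta}\bigl(|y|^{-\beta}-|x|^{-\beta}\bigr)v_\beta(y)^2 .
\end{multline*}

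Since $u\in C^\infty_c(\mathbb{R}^n\setminus\{0\})$, so is $v_\beta$, and both vanish near the origin. The two cross terms above are not absolutely integrable against $|x-y|^{-n-\alpha}$ near the diagonal when $\alpha\ge1$ (although their sum is, just like $|u(x)-u(y)|^2$), so I would first integrate the pointwise identity over the region $\{|x-y|>\epsilon\}$, where every term is absolutely convergent, and then let $\epsilon\to0$. Multiplying by $\tfrac{C_{n,\alpha}}{2}|x-y|^{-n-\alpha}$, integrating, and using the $x\leftrightarrow y$ symmetry to merge the last two terms, one gets
\begin{multline*}
\frac{C_{n,\alpha}}{2}\iint_{|x-y|>\epsilon}\frac{|u(x)-u(y)|^2}{|x-y|^{n+\alpha}}\,dxdy\\
=\frac{C_{n,\alpha}}{2}\iint_{|x-y|>\epsilon}\frac{|v_\beta(x)-v_\beta(y)|^2}{|x-y|^{n+\alpha}}\,\frac{dx}{|x|^\beta}\frac{dy}{|y|^\beta}
+C_{n,\alpha}\int_{\mathbb{R}^n}\frac{v_\beta(x)^2}{|x|^\beta}\,I_\epsilon(x)\,dx ,
\end{multline*}
where $I_\epsilon(x):=\int_{|y-x|>\epsilon}\frac{|x|^{-\beta}-|y|^{-\beta}}{|x-y|^{n+\alpha}}\,dy$.

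Letting $\epsilon\to0$: on the left-hand side and in the first term on the right, monotone convergence produces the full integrals (finite because $\alpha<2$ handles the diagonal and $\beta<n$ the origin, while $v_\beta$ is compactly supported), whereas for the last term one has $\lim_{\epsilon\to0}I_\epsilon(x)=C_{n,\alpha}^{-1}(-\Delta)^{\frac{\alpha}{2}}|x|^{-\beta}=C_{n,\alpha}^{-1}\Psi_{n,\alpha}(\beta)|x|^{-\alpha-\beta}$ for every $x\ne0$, by Proposition~\ref{rem:M-1}(7), and $|I_\epsilon|$ is bounded uniformly in $\epsilon$ on $\operatorname{supp}v_\beta$, so dominated convergence gives
\[
\lim_{\epsilon\to0}C_{n,\alpha}\int_{\mathbb{R}^n}\frac{v_\beta(x)^2}{|x|^\beta}\,I_\epsilon(x)\,dx
=\Psi_{n,\alpha}(\beta)\int_{\mathbb{R}^n}\frac{v_\beta(x)^2}{|x|^{2\beta+\alpha}}\,dx
=\Psi_{n,\alpha}(\beta)\int_{\mathbb{R}^n}\frac{u(x)^2}{|x|^\alpha}\,dx ,
\]
using $v_\beta(x)^2|x|^{-2\beta}=u(x)^2$. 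This is exactly the asserted formula.

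The main obstacle is the last step: establishing that $I_\epsilon(x)\to C_{n,\alpha}^{-1}\Psi_{n,\alpha}(\beta)|x|^{-\alpha-\beta}$ with a uniform-in-$\epsilon$ bound over compact subsets of $\mathbb{R}^n\setminus\{0\}$. This is handled by splitting the $y$-integral into the regions near $y=x$ (where the principal value converges thanks to a second-order Taylor expansion of $y\mapsto|y|^{-\beta}$, which is precisely where $\alpha<2$ enters), near $y=0$ (where $\beta<n$ gives integrability), and near infinity (where the decay is immediate); alternatively, one may simply invoke that $(-\Delta)^{\frac{\alpha}{2}}|x|^{-\beta}$ is represented by this pointwise principal-value integral away from the origin, which is the computation behind Proposition~\ref{rem:M-1}(7).
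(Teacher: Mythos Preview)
The paper does not supply its own proof of this lemma: it is quoted verbatim from Frank--Lieb--Seiringer \cite{Frank-Lieb-Seiringer}, Formula~(4.3), and used as a black box. Your argument is correct and is essentially the computation carried out in that reference: the algebraic identity you use is exactly the one behind the ground state representation, and the passage to the limit via the pointwise principal-value formula for $(-\Delta)^{\alpha/2}|x|^{-\beta}$ (with the uniform bound on $I_\epsilon$ coming from the second-order Taylor expansion near the diagonal, which is where $\alpha<2$ is used) is the standard justification.
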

\noindent Let now $u\in \h$ be a positive weak solution to \eqref{Main:problem}. Then by  (\ref{def:weak:sol}) and Remark 4.4 in \cite{DMPS},  we have
\[\frac{C_{n,\alpha}}{2}\int_{\mathbb{R}^n}\int_{\mathbb{R}^n} \frac{|u(x)-u(y)|^2}{|x-y|^{n+\alpha}}\ dxdy=\gamma\int_{\mathbb{R}^n}\frac{ u^2(x)}{|x|^\alpha}\ dx+\int_{\mathbb{R}^n}\frac{u^{2_\alpha^*(s)}(x)}{|x|^s}\ dx.\]
Set $v(x)=|x|^{\Bm}u(x)$ on $\mathbb{R}^n\backslash\{0\}$. It follows from Lemma \ref{lem:ground:rep} and the definition of $\Bm$ that 
\begin{eqnarray*}
\frac{C_{n,\alpha}}{2}\int_{\R^n}\int_{\R^n} \frac{|v(x)-v(y)|^2}{|x-y|^{n+\alpha}}\ \frac{dx}{|x|^{\Bm}}\frac{dy}{|y|^{\Bm}}
& =&\frac{C_{n,\alpha}}{2}\int_{\R^n}\int_{\R^n} \frac{|u(x)-u(y)|^2}{|x-y|^{n+\alpha}}\ dxdy\\
&&-\Psi_{n,\alpha}(\Bm)\int_{\R^n} \frac{u^2(x)}{|x|^\alpha}\ dx\\
&=&\gamma\int_{\R^n}\frac{u^2(x)}{|x|^\alpha}\ dx+\int_{\R^n}\frac{u^{2_\alpha^*(s)}(x)}{|x|^s}\ dx\\
&&-\Psi_{n,\alpha}(\Bm)\int_{\R^n} \frac{u^2(x)}{|x|^\alpha}\ dx\\
&=&\int_{\R^n}\frac{u^{2_\alpha^*(s)}(x)}{|x|^s}\,dx
=\int_{\R^n}\ \frac{v^{2_\alpha^*(s)}(x)}{|x|^{s+\Bm 2_\alpha^*(s)}} dx.
\end{eqnarray*}
For $ 0<\beta<\frac{n-\alpha}{2}$, define the space $\displaystyle H^{\frac{\alpha}{2},\beta}_0(\R^n)$ as the completion of $C_c^\infty(\R^n \setminus \{0\})$ with respect to the norm
\[\Vert \phi\Vert_{H^{\frac{\alpha}{2},\beta}_0(\R^n)}:=\left( \int_{\R^n}\int_{\R^n} \frac{|\phi(x)-\phi(y)|^2}{|x-y|^{n+\alpha}}\ \frac{dx}{|x|^{\beta}}\frac{dy}{|y|^{\beta}}\right)^{\frac{1}{2}}.\]
Many of the properties of the space $\displaystyle H^{\frac{\alpha}{2},\beta}_0(\R^n)$ were established in \cite{DV}. By  Lemma \ref{lem:ground:rep}, Remark 4.4 in \cite{DMPS} and \cite{AB}, we have that $\displaystyle v\in \displaystyle H^{\frac{\alpha}{2},\beta}_0(\R^n)$. Now, we introduce the operator $\displaystyle (-\Delta_{\beta})^{\frac{\alpha}{2}}$, whose action on a function $w$ is given via the following duality: For $\displaystyle \phi\in H^{\frac{\alpha}{2},\beta}_0(\R^n)$,
\[\langle (-\Delta_\beta)^{\frac{\alpha}{2}} w,\phi\rangle=\frac{C_{n,\alpha}}{2}\int_{\R^n}\int_{\R^n} \frac{(w(x)-w(y))(\phi(x)-\phi(y))}{|x-y|^{n+\alpha}}\ \frac{dx}{|x|^\beta}\frac{dy}{|y|^{\beta}}. 
\]
This means that $v$ is a weak solution to
\begin{eqnarray}\label{eqn:M-6}
(-\Delta_{\Bm})^{\frac{\alpha}{2}} v=\frac{v^{2_\alpha^*(s)-1}}{|x|^{{s+\Bm2_\alpha^*(s)}}} \quad\textnormal{in $\R^n$},
\end{eqnarray}
in the sense that for any $\phi\in H^{\frac{\alpha}{2},\Bm}(\R^n)$, we have that
\[\frac{C_{n,\alpha}}{2}\int_{\R^n}\int_{\R^n} \frac{(v(x)-v(y))(\phi(x)-\phi(y))}{|x-y|^{n+\alpha}}\ \frac{dx}{|x|^{\Bm}}\frac{dy}{|y|^{\Bm}}=\int_{\R^n} \frac{v^{2_\alpha^*(s)-1}}{|x|^{s+\Bm 2_\alpha^*(s)}}\phi\ dx .  \]
The following proposition gives a regularity result and a Harnack inequality for weak solutions of \eqref{eqn:M-6}. 
\begin{proposition}\label{prop:M-1}
Assume $0<s<\alpha<2,$ $n >\alpha$ and $0<\beta<\frac{n-\alpha}{2}$, and let $\displaystyle v\in H^{\frac{\alpha}{2},\beta}_0(\R^n)$ be a non-negative, non-zero weak solution to the problem
\[(-\Delta_\beta)^{\frac{\alpha}{2}} v=\frac{v^{2_\alpha^*(s)-1}}{|x|^{{s+\beta2_\alpha^*(s)}}} \quad\textnormal{in $\R^n$}.\]
\noindent Then, $v\in L^\infty(\R^n)$ and there exist constants $R>0$ and $C>0$ such that $\displaystyle C\leq v(x)$ in $B_R(0)$.

\end{proposition}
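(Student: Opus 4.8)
The plan is to establish two separate statements: the global bound $v\in L^\infty(\R^n)$, and the local lower bound $C\le v$ on a small ball $B_R(0)$. Both should follow from regularity theory for the weighted operator $(-\Delta_\beta)^{\alpha/2}$ together with a Moser/De Giorgi-type iteration adapted to the weight, exactly as carried out in Dipierro-Montoro-Peral-Sciunzi \cite{DMPS} for the case $\gamma$-only (i.e. $s=0$) — the point here being that the Hardy-Sobolev weight $|x|^{-(s+\beta 2_\alpha^\star(s))}$ on the right-hand side behaves no worse than in that reference once one checks the exponent bookkeeping. So the first thing I would do is record, from Lemma \ref{lem:ground:rep} and the ground-state substitution $v=|x|^{\beta}u$, that $v$ is finite energy in $H^{\frac\alpha2,\beta}_0(\R^n)$ and that the equation can be written in the divergence-type weak form displayed just above the proposition.

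\medskip\noindent Next I would prove $v\in L^\infty(\R^n)$. The idea: fix a point and test the weak formulation with truncated powers $\varphi = \eta^2 v\min\{v,L\}^{2(q-1)}$ (with $\eta$ a cutoff and $L$ a truncation level), use the fractional weighted Sobolev inequality available for $H^{\frac\alpha2,\beta}_0(\R^n)$ (this is where the results of \cite{DV} and \cite{AB} quoted in the text enter), and iterate on $q$. The key smallness input needed to start the iteration is that the singular coefficient $|x|^{-(s+\beta 2_\alpha^\star(s))}$ multiplied by $v^{2_\alpha^\star(s)-2}$ lies, locally, in the correct Lorentz/Lebesgue space — which is exactly the borderline case, since $v$ itself is only known a priori to be in the weighted critical Lebesgue space. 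One handles this by a concentration argument: split $v = v\mathbbm{1}_{\{v\le N\}} + v\mathbbm{1}_{\{v>N\}}$ and choose $N$ large so that the bad part has small weighted norm, absorbing it into the left-hand side. Away from $0$ this is standard nonlocal elliptic regularity; near $0$ the weight $|x|^{-\beta}$ in the bilinear form and the weight $|x|^{-(s+\beta 2_\alpha^\star(s))}$ on the right conspire, via the relation $\crits = \frac{2(n-s)}{n-\alpha}$, to reproduce the scaling-invariant structure, so the iteration closes. I would also note that $v\in H^{\frac\alpha2,\beta}_0$ already gives, by the weighted Sobolev embedding, that $v\in L^{2_\alpha^\star(s)}_{\mathrm{loc}}$ with the $|x|^{-s-\beta 2_\alpha^\star(s)}$ weight, which is the base case $q=1$.

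\medskip\noindent For the local lower bound I would invoke a Harnack-type inequality for non-negative weak supersolutions of $(-\Delta_\beta)^{\alpha/2} w \ge 0$ near the origin. Since $v\ge 0$, $v\not\equiv 0$, and $(-\Delta_\beta)^{\alpha/2} v = |x|^{-s-\beta 2_\alpha^\star(s)} v^{2_\alpha^\star(s)-1}\ge 0$, $v$ is a non-negative supersolution; combined with $v\in L^\infty$ (so that the right-hand side is locally bounded in $L^p$ for the relevant $p$), the weak Harnack inequality for the weighted fractional operator — again in the spirit of \cite{DMPS}, Remark 4.4, and the De Giorgi-Nash-Moser theory for nonlocal operators with the Muckenhoupt-type weight $|x|^{-\beta}$ (here $\beta<\frac{n-\alpha}2$ ensures the weight is admissible) — yields $\inf_{B_R(0)} v \ge c\, \big(\fint_{B_{2R}} v^{p_0}\big)^{1/p_0}>0$ for some small exponent $p_0>0$, because $v>0$ on a set of positive measure by the strong maximum principle / the fact that $v$ solves a subcritical-looking equation with non-negative data. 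That gives the constants $R,C>0$.

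\medskip\noindent The main obstacle I expect is the $L^\infty$ bound near the singularity: the right-hand side weight is critical, so a naive Moser iteration does not start, and one genuinely needs the concentration-compactness splitting (choosing the truncation level $N$ depending on the fixed but arbitrary solution $v$) together with the precise form of the weighted fractional Sobolev inequality from \cite{DV,AB}. A secondary technical point is checking that the weight $|x|^{-\beta}$ is of the right class for both the weighted Sobolev inequality and the weak Harnack inequality to hold simultaneously — this is exactly the reason the hypothesis $0<\beta<\frac{n-\alpha}{2}$ (rather than $\beta$ up to $n-\alpha$) is imposed. Everything else — the far-field behaviour where the equation is a perfectly nice nonlocal equation with smooth coefficients, and the passage between the $u$- and $v$-formulations — is routine and can be quoted from \cite{DMPS} and the regularity references already cited.
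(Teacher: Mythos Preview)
Your proposal is correct and follows essentially the same strategy as the paper: a Moser-type iteration with truncated powers plus the concentration splitting $v=v\mathbbm{1}_{\{v\le N\}}+v\mathbbm{1}_{\{v>N\}}$ to absorb the critical term for the $L^\infty$ bound, and a weak Harnack inequality for the weighted operator $(-\Delta_\beta)^{\alpha/2}$ for the lower bound. The only cosmetic differences are that the paper iterates globally (no cutoff $\eta$, testing directly with $\phi_{p,T}(v)$ on all of $\R^n$ against the global weighted Sobolev inequality) rather than locally, and it cites Theorem~3.4 and Lemma~3.10 of Abdellaoui--Medina--Peral--Primo \cite{AMPP.CZ} rather than \cite{DMPS} for the Harnack step.
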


\begin{proof}
The statement that $v(x)\geq C$ in $B_R(0)$ is essentially the Harnack inequality for super-harmonic functions associated to the nonlocal operator $\displaystyle (-\Delta_\beta)^{\frac{\alpha}{2}}$, which is just Theorem 3.4 in Abdellaoui-Medina-Peral-Primo \cite{AMPP.CZ}. See also the proof of Lemma 3.10 in \cite{AMPP.CZ} and also \cite{DMPS}. We now show that $v\in L^\infty(\R^n)$ by using a similar argument as in  \cite{DMPS}. For any $p\geq 1$ and $T>0$, define the function
\[\phi_{p,T}(t)=\left\{
\begin{array}{ll}
\displaystyle t^{p} &\textnormal{if $0\leq t\leq T$} \vspace{0.1cm}\\
pT^{p-1}(t-T)+T^p  &\textnormal{if $t>T$}.
\end{array}
\right.\]
It is easy to check that the function $\phi_{p,T}(t)$ has the following properties:
\begin{itemize}
\item $\phi_{p,T}(t)$ is convex and Lipschitz in $[0,\infty)$.

\item $\phi_{p,T}(t)\leq t^p$ for all $t\geq0$.\vspace{0.1cm}

\item $t\phi_{p,T}'(t)\leq 2p\phi_{p,T}(t)$ for all $t\geq0$, since 
$\displaystyle t\phi_{p,T}'(t)=\left\{
\begin{array}{ll}
p\phi_{p,T}(t)  &\textnormal{if $0<t<T$}\vspace{0.1cm}\\
p T^{p -1} t  &\textnormal{if $t>T$}.
\end{array}
\right.$ 

\item If $T_2>T_1>0$, then $\phi_{p,T_1}(t)\leq\phi_{p,T_2}(t)$ for all $t\geq0$.

\end{itemize}
Since $\phi_{p,T}(t)$ is convex and Lipschitz, then as noted in \cite{lpps}, 
\begin{eqnarray}\label{eqn:M-7}
(-\Delta_\beta)^{\frac{\alpha}{2}} \phi_{p,T}(v)\leq \phi_{p,T}'(v)(-\Delta_\beta)^{\frac{\alpha}{2}}v\quad\textnormal{in $\R^n$}.
\end{eqnarray}
Since $\phi_{p,T}(t)$ is Lipschitz and $\phi_{p,T}(0) = 0$, then $\displaystyle \phi_{p,T}(v)\in H^{\frac{\alpha}{2},\beta}_0(\R^n)$. By the weighted fractional Hardy-Sobolev inequality, the ground state representation formula, Lemma \ref{lem:ground:rep}, and (\ref{def:mu}), we get that there exists some constant $C_0>0$ which only depends on $n$, $\alpha$, $s$ and $\beta$ such that
\begin{eqnarray}\label{eqn:M-8}
\left[\int_{\R^n} \frac{|\phi_{p,T}(v)|^{2_\alpha^*(s)}}{|x|^{s+\beta 2_\alpha^*(s)}}\ dx\right]^{\frac{2}{2_\alpha^*(s)}}\leq \frac{C_0}{2}\int_{\R^n}\int_{\R^n} \frac{|\phi_{p,T}(v(x))-\phi_{p,T}(v(y))|^2}{|x-y|^{n+\alpha}}\ \frac{dx}{|x|^\beta}\frac{dy}{|y|^\beta}.
\end{eqnarray}
Since $\phi_{p,T}(t)\geq0$ for all $t\geq0$, we get from \eqref{eqn:M-7} that
\begin{eqnarray*}
\int_{\R^n}\int_{\R^n} \frac{|\phi_{p,T}(v(x))-\phi_{p,T}(v(y))|^2}{|x-y|^{n+\alpha}}\ \frac{dx}{|x|^\beta}\frac{dy}{|y|^\beta}&=&\int_{\R^n}\phi_{p,T}(v)(-\Delta_\beta)^{\frac{\alpha}{2}}\phi_{p,T}(v)\ dx\\
&\leq& \int_{\R^n}\phi_{p,T}(v)\phi_{p,T}'(v)(-\Delta_\beta)^{\frac{\alpha}{2}}v\ dx\\
&=&\int_{\R^n} \phi_{p,T}(v)\phi_{p,T}'(v) \frac{v^{2_\alpha^*(s)-1}}{|x|^{s+\beta2_\alpha^*(s)}}\ dx\\&\leq&2p\int_{\R^n}|\phi_{p,T}(v)|^2 \frac{v^{2_\alpha^*(s)-2}}{|x|^{s+\beta2_\alpha^*(s)}}\ dx.
\end{eqnarray*}
Note that the last inequality holds, since $t\phi_{p,T}'(t)\leq2 p\phi(t)$ for all $t\geq0$.
By \eqref{eqn:M-8}, we have
\begin{eqnarray}\label{eqn:M-9}
\left[\int_{\R^n} \frac{|\phi_{p,T}(v)|^{2_\alpha^*(s)}}{|x|^{s+\beta 2_\alpha^*(s)}}\ dx\right]^{\frac{2}{2_\alpha^*(s)}}\leq pC_0\int_{\R^n}|\phi_{p,T}(v)|^2 \frac{v^{2_\alpha^*(s)-2}}{|x|^{s+\beta2_\alpha^*(s)}}\ dx.
\end{eqnarray}
Letting $\displaystyle p_1=\frac{2_\alpha^*(s)}{2}$, then 
\begin{eqnarray}\label{eqn:M-10}
\left[\int_{\R^n} \frac{|\phi_{p_1,T}(v)|^{2_\alpha^*(s)}}{|x|^{s+\beta 2_\alpha^*(s)}}\ dx\right]^{\frac{2}{2_\alpha^*(s)}}\leq p_1C_0\int_{\R^n}|\phi_{p_1,T}(v)|^2 \frac{v^{2_\alpha^*(s)-2}}{|x|^{s+\beta2_\alpha^*(s)}}\ dx.
\end{eqnarray}
For $m>0$, a simple computation and H\"{o}lder's inequality yield that
\begin{eqnarray*}
p_1C_0\int_{\R^n}|\phi_{p_1,T}(v)|^2 \frac{v^{2_\alpha^*(s)-2}}{|x|^{s+\beta2_\alpha^*(s)}} dx
 &=&p_1C_0\int_{v(x)\leq m}|\phi_{p_1,T}(v)|^2 \frac{v^{2_\alpha^*(s)-2}}{|x|^{s+\beta2_\alpha^*(s)}}\ dx\\
 &&+p_1C_0\int_{v(x)>m}|\phi_{p_1,T}(v)|^2 \frac{v^{2_\alpha^*(s)-2}}{|x|^{s+\beta2_\alpha^*(s)}} dx\\
 &\leq&  p_1 C_0  m^{2_\alpha^*(s)-2} \int_{v(x)\leq m} \frac{|\phi_{p_1,T}(v)|^2}{|x|^{s+\beta 2_\alpha^*(s)}} dx\\
 &&+p_1C_0 \int_{v(x)>m} \frac{|\phi_{p_1,T}(v)|^2}{|x|^{\frac{2(s+\beta2_\alpha^*(s))}{2_\alpha^*(s)}}} \frac{v^{2_\alpha^*(s)-2}}{|x|^{s+\beta2_\alpha^*(s)-\frac{2(s+\beta2_\alpha^*(s))}{2_\alpha^*(s)}}} dx\\
&\leq& p_1 C_0  m^{2_\alpha^*(s)-2} \int_{\R^n} \frac{|\phi_{p_1,T}(v)|^2}{|x|^{s+\beta 2_\alpha^*(s)}} dx\\
&&+p_1C_0\left[\int_{v(x)>m}\frac{|\phi_{p_1,T}(v)|^{2_\alpha^*(s)}}{|x|^{s+\beta 2_\alpha^*(s)}} dx\right]^{\frac{2}{2_\alpha^*(s)}}  \left[\int_{v(x)>m} \frac{v^{2_\alpha^*(s)}}{|x|^{s+\beta 2_\alpha^*(s)}}\ dx\right]^{\frac{\alpha-s}{n-s}}\\
& \leq& p_1 C_0  m^{2_\alpha^*(s)-2} \int_{\R^n} \frac{|\phi_{p_1,T}(v)|^2}{|x|^{s+\beta 2_\alpha^*(s)}}\ dx\\
&&+p_1C_0\left[\int_{\R^n}\frac{|\phi_{p_1,T}(v)|^{2_\alpha^*(s)}}{|x|^{s+\beta 2_\alpha^*(s)}}\ dx\right]^{\frac{2}{2_\alpha^*(s)}}  \left[\int_{v(x)>m} \frac{v^{2_\alpha^*(s)}}{|x|^{s+\beta 2_\alpha^*(s)}}\ dx\right]^{\frac{\alpha-s}{n-s}}.
\end{eqnarray*}
Recall that  $\displaystyle v\in H^{\frac{\alpha}{2},\beta}_0(\R^n)$, hence $\displaystyle \int_{\R^n} \frac{v^{2_\alpha^*(s)}}{|x|^{s+\beta 2_\alpha^*(s)}}\ dx<\infty$. Thus, we can take a large $M_0\gg1$ and fix it in such a way that
\[p_1C_0\left[\int_{v(x)>M_0} \frac{v^{2_\alpha^*(s)}}{|x|^{s+\beta 2_\alpha^*(s)}}\ dx\right]^{\frac{\alpha-s}{n-s}}\leq \frac{1}{2}.\]
Since $\phi_{p_1,T}(t)\leq t^{p_1}$ for all $t\geq0$, then by \eqref{eqn:M-10} and the fact that  $\displaystyle p_1=\frac{2_\alpha^*(s)}{2}$, we get
\begin{eqnarray}
\left[\int_{\R^n} \frac{|\phi_{p_1,T}(v)|^{2_\alpha^*(s)}}{|x|^{s+\beta 2_\alpha^*(s)}}\ dx\right]^{\frac{2}{2_\alpha^*(s)}}
&\leq& 2p_1 C_0  M_0^{2_\alpha^*(s)-2} \int_{\R^n} \frac{|\phi_{p_1,T}(v)|^2}{|x|^{s+\beta 2_\alpha^*(s)}}\ dx\nonumber\\
&\leq& 2p_1 C_0  M_0^{2_\alpha^*(s)-2} \int_{\R^n} \frac{|v|^{2p_1}}{|x|^{s+\beta 2_\alpha^*(s)}}\ dx\nonumber\\
&=&2p_1 C_0  M_0^{2_\alpha^*(s)-2} \int_{\R^n} \frac{v^{2_\alpha^*(s)}}{|x|^{s+\beta 2_\alpha^*(s)}}\ dx. \label{eqn:M-11}
\end{eqnarray}
Let $C_1= 2C_0M_0^{2_\alpha^*(s)-2}.$
By taking $T\rightarrow\infty$ in \eqref{eqn:M-11} and applying Fatou's lemma, we get that
\begin{eqnarray*}
\left[\int_{\R^n} \frac{v^{p_12_\alpha^*(s)}}{|x|^{s+\beta 2_\alpha*(s)}}\ dx\right]^{\frac{2}{2_\alpha*(s)}}\leq p_1C_1 \int_{\R^n} \frac{v^{2_\alpha^*(s)}}{|x|^{s+\beta 2_\alpha^*(s)}}\ dx<\infty.
\end{eqnarray*}
Define now recursively the sequence $\displaystyle \{p_k\}_{k=2}^\infty$ as follows:
\begin{eqnarray}\label{eqn:M-12}
2p_{k+1}+2_\alpha^*(s)-2=p_k2_\alpha^*(s) \quad\textnormal{for all $k\geq 1$}.
\end{eqnarray}
Using  \eqref{eqn:M-9} and \eqref{eqn:M-12}, we have
\begin{eqnarray}
\left[\int_{\R^n} \frac{|\phi_{p_{k+1},T}(v)|^{2_\alpha^*(s)}}{|x|^{s+\beta 2_\alpha^*(s)}} dx\right]^{\frac{2}{2_\alpha^*(s)}}
&\leq& p_{k+1}C_0\int_{\R^n}|\phi_{p_{k+1},T}(v)|^2 \frac{v^{2_\alpha^*(s)-2}}{|x|^{s+\beta2_\alpha^*(s)}} dx\nonumber\\
&\leq& p_{k+1}C_0\int_{\R^n} v^{2p_{k+1}} \frac{v^{2_\alpha^*(s)-2}}{|x|^{s+\beta2_\alpha^*(s)}} dx\nonumber\\
&=&C_0p_{k+1} \int_{\R^n} \frac{v^{p_k2_\alpha^*(s)}}{|x|^{s+\beta2_\alpha^*(s)}} dx.\quad \label{eqn:M-13}
\end{eqnarray}
We also have used the fact that $\phi_{p_{k+1},T}(t)\leq t^{p_{k+1}}$ for all $t\geq0.$
By taking $T\rightarrow\infty$ in \eqref{eqn:M-13} and applying Fatou's lemma, we get that
\[\left[\int_{\R^n} \frac{v^{p_{k+1}2_\alpha^*(s)}}{|x|^{s+\beta2_\alpha^*(s)}} dx\right]^{\frac{2}{2_\alpha^*(s)}}\leq C_0p_{k+1}\int_{\R^n} \frac{v^{p_k2_\alpha^*(s)}}{|x|^{s+\beta2_\alpha^*(s)}}  dx\quad\textnormal{for all $k\geq 1$}.\]
Hence, by \eqref{eqn:M-12}, we obtain that
\begin{eqnarray*}
\left[\int_{\R^n} \frac{v^{p_{k+1}2_\alpha^*(s)}}{|x|^{s+\beta2_\alpha^*(s)}}\ dx\right]^{\frac{1}{2_\alpha^*(s)(p_{k+1}-1)}}&\leq& (C_0p_{k+1})^{\frac{1}{2(p_{k+1}-1)}}\left[\int_{\R^n} \frac{v^{p_k2_\alpha^*(s)}}{|x|^{s+\beta2_\alpha^*(s)}}\ dx\right]^{\frac{1}{2(p_{k+1}-1)}}\\
&=&(C_0p_{k+1})^{\frac{1}{2(p_{k+1}-1)}}\left[\int_{\R^n} \frac{v^{p_k2_\alpha^*(s)}}{|x|^{s+\beta2_\alpha^*(s)}}\ dx\right]^{\frac{1}{2_\alpha^*(s)(p_k-1)}}\quad .
\end{eqnarray*}
For $k\geq 1$, set
$$I_k:=\left[\int_{\R^n} \frac{v^{p_k2_\alpha^*(s)}}{|x|^{s+\beta2_\alpha^*(s)}}\ dx\right]^{\frac{1}{2_\alpha^*(s)(p_k-1)}}\hbox{ and }D_k=(C_0p_{k+1})^{\frac{1}{2(p_{k+1}-1)}}.$$
We have $I_{k+1}\leq D_k I_k$ for all $k\geq 1$, and 
\begin{eqnarray*}
\ln I_{k+1}&\leq&\ln D_k+\ln I_k\leq\sum_{j=1}^k \ln D_j+\ln I_1\leq\sum_{j=1}^k \frac{\ln C_0+\ln p_{j+1}}{2(p_{j+1}-1)}+\ln I_1.
\end{eqnarray*}
It follows from \eqref{eqn:M-12} that  $\displaystyle p_{k+1}=p_1^k(p_1-1)+1$ for all $k\geq 0$. This coupled with the fact that $p_1>1$ yield  
\begin{eqnarray*}
\ln I_{k+1}&\leq&\sum_{j=1}^k \frac{\ln C_0}{2 p_1^j(p_1-1)}+\sum_{j=1}^k \frac{\ln [p_1^j(p_1-1)+1]}{2p_1^j(p_1-1)}+\ln I_1\\
&\leq&\sum_{j=1}^k \frac{\ln C_0}{2 p_1^j(p_1-1)}+\sum_{j=1}^k \frac{\ln p_1^{j+1}}{2p_1^j(p_1-1)}+\ln I_1<C_2<\infty.
\end{eqnarray*}
For any fix $R\geq 1$, we then have
\[\left[\int_{|x|\leq R} \frac{v^{p_k2_\alpha^*(s)}}{|x|^{s+\beta2_\alpha^*(s)}}\ dx\right]^{\frac{1}{2_\alpha^*(s)(p_k-1)}}\leq I_k\leq e^{C_2}=:C_3 \quad\textnormal{for all $k\geq 1$}.\]
Since $s+\beta2_\alpha^*(s)>0$, we then get
\begin{eqnarray*}
\left[\int_{|x|\leq R} v^{p_k2_\alpha^*(s)}\ dx\right]^{\frac{1}{2_\alpha^*(s)p_k}}\leq C_3 R^{\frac{s+\beta 2_\alpha^*(s)}{2_\alpha^*(s)p_k}} \quad\textnormal{for all $k\geq 1$}.
\end{eqnarray*}
Since $\displaystyle \lim_{k\rightarrow\infty}\ p_k=\infty$, we have

\[\Vert v\Vert_{L^\infty(B_R(0))}=\lim_{k\rightarrow\infty}\ \left[\int_{|x|\leq R} v^{p_k2_\alpha^*(s)}\ dx\right]^{\frac{1}{2_\alpha^*(s)p_k}}\leq C_3,\]
and finally, that $\displaystyle \Vert v\Vert_{L^\infty(\R^n)}\leq C_3$.\end{proof}

\begin{proof}[Proof of  Theorem \ref{th:asymp:ext}]
Let $v(x)=|x|^{\Bm}u(x)$ in $\R^n\backslash\{0\}$, by the discussion before at the beginning of section 3, we know that $\displaystyle v\in H^{\frac{\alpha}{2},\beta}_0(\R^n)$ is a positive weak solution to \eqref{eqn:M-6}. We deduce from Proposition \ref{prop:M-1} that for all $R>0,$ there exist some constant $C>1$ such that $C^{-1}\leq v(x)\leq C$ in $B_R(0)$. Since $v(x)=|x|^{\Bm}u(x)$ in $\R^n\backslash\{0\}$, then 
\begin{equation}\label{control:u:0}
\frac{C^{-1}}{|x|^{\Bm}}\leq u(x)\leq \frac{C}{|x|^{\Bm}} \text{ in }B_R(0)\backslash\{0\}.
\end{equation}
In order to prove the asymptotic behavior at zero, it is enough to show that $\lim\limits_{x \to 0} |x|^{\Bm} u(x) $ exists. To that end, we proceed as follows:

\medskip\noindent{\it Claim 1:} $u\in C^1(\rn\setminus\{0\})$.

\smallskip\noindent This is consequence of regularity theory and we only sketch the proof. First we define $f_0(x):=\gamma|x|^{-\alpha}u+u^{\crits-1}|x|^{-s}$, so that for any $\omega\subset\subset \rn\setminus\{0\}$, we have that $(-\Delta)^{\alpha/2}u=f_0$ in $\omega$ in the sense that $u\in \h$ and
$$\frac{C_{n,\alpha}}{2}\int_{\R^n} \int_{\R^n} \frac{|u(x) - u(y) |^2}{|x-y|^{n+\alpha}} dx dy =\int_\omega f_0\varphi\, dx\hbox{ for all }\varphi\in C^\infty_c(\omega).$$
It follows from \eqref{control:u:0} that $f_0\in L^\infty(\omega)$. Since $u\geq 0$ and $f_0\in L^\infty(\omega)$, it follows from Remark 2.5 (see also Theorem 2.1) in Jin-Li-Xiong \cite{Li_JEMS} that there exists $\tau>0$ such that $u\in C^{0,\tau}_{loc}(\rn\setminus\{0\})$. Then, using recursively Theorem 2.1 in Jin-Li-Xiong \cite{Li_JEMS}, we get that $u\in C^1(\rn\setminus\{0\})$. This proves the claim.

\medskip\noindent{\it Claim 2:} There exists $C>0$ such that $|x|^{\Bm+1}|\nabla u(x)|\leq C$ for all $x\in B_1(0)\setminus\{0\}$.

\smallskip\noindent If not, then there exists a sequence $(x_i)_{i\in\nn}\in B_1(0)\setminus\{0\}$ such that $\lim_{i\to +\infty}|x_i|^{\Bm+1}|\nabla u(x_i)|=+\infty$. For simplicity, we write $\bm:=\Bm$. It follows from from Claim 1,  that $\lim_{i\to +\infty}x_i=0$. We define $r_i:=|x_i|$ and we set
$$u_i(x):=r_i^{\bm}u(r_i x)\hbox{ for all }x\in \rn\setminus\{0\}.$$
It is easy to see that $u_i\in \h$, $u_i\geq 0$ for all $i\in\nn$ and  $(-\Delta)^{\alpha/2}u_i=f_i$ in $\omega\subset\subset \rn\setminus\{0\}$ where
$f_i(x):=\gamma|x|^{-\alpha}u_i+r_i^{(\crits-2)(\frac{n-\alpha}{2}-\bm)}u_i^{\crits-1}|x|^{-s}$ for all $x\in \rn\setminus\{0\}$. Using the apriori bound of Remark 2.5 (see also Theorem 2.1) in Jin-Li-Xiong \cite{Li_JEMS}, we get that there exists $\tau>0$ such that for any $R>1$, there exists $C(R)>0$ such that $\Vert u_i\Vert_{C^{0,\tau}(B_{R}(0)-B_{R^{-1}}(0))}\leq C(R)$ for all $i\in\nn$. Using recursively Theorem 2.1 of \cite{Li_JEMS} as in Step 1, we get that for any $\omega\subset\subset \rn\setminus\{0\}$, there exists $C(\omega)>0$ such that $\Vert u_i\Vert_{C^1(\omega)}\leq C(\omega)$. Taking $\omega$ large enough and estimating $|\nabla u_i(\frac{x_i}{|x_i|})|$, we get a contradiction, which proves Claim 2.

Set now $h(x):=\frac{u^{\crits-2}}{|x|^s}$, so that
$(-\Delta)^{\alpha/2}u-\frac{\gamma}{|x|^\alpha}u=h(x)u\hbox{ in }\rn.$
It follows from Claims 1 and 2, that $h\in C^1(\rn\setminus\{0\})$, and for some $C>0$,  
$$|h(x)|+|x|\cdot|\nabla h(x)|\leq C|x|^{\theta-\alpha}\hbox{ for all }x\in B_1(0)\setminus\{0\},$$
where $\theta:=(\crits-2)(\frac{n-\alpha}{2}-\bm)>0$. It then follows from Lemma \ref{lem:ext:fall:felli} below that there exists $\lambda_0>0$ such that 
$$\lim_{x\to 0}|x|^{\bm}u(x)=\lambda_0>0.$$
In order to deal with the behavior at infinity, let $w$ be the fractional Kelvin transform of $u$, that is, 
$$ w(x)=|x|^{\alpha-n}u(x^*):=|x|^{\alpha-n}u\left(\frac{x}{|x|^2}\right) \hbox{in $\R^n\backslash\{0\}.$}$$
 By  Lemma 2.2 and  Corollary 2.3 in \cite{Fall-Weth}, we have that $\displaystyle w\in H^{\frac{\alpha}{2}}(\R^n)$. A simple calculation gives us that $w$ is also a positive weak solution to \eqref{Main:problem}. Indeed, we have
\begin{align*}
(-\Delta)^{\frac{\alpha}{2} } w(x) = \frac{1}{|x|^{n+\alpha}} \left((-\Delta)^{\frac{\alpha}{2} }u\right)\left(\frac{x}{|x|^2}\right)
= \gamma \frac{w(x)}{|x|^\alpha}  + \frac{w^{2_\alpha^*(s)-1}(x)}{|x|^s}.
\end{align*}
Arguing as in the first part of the proof, we get that there exists $\lambda_\infty>0$ such that
$$\lim_{x\to 0}|x|^{\Bm}w(x)=\lambda_\infty>0.$$
Coming back to $u$, this implies that 
$$\lim_{|x|\to \infty}|x|^{\Bp}u(x)=\lambda_\infty>0.$$
This ends the proof of Theorem \ref{th:asymp:ext}.
\end{proof}
\section{ Analytic Conditions for The  Existence of Extremals} 

Let $a\in C^{0,\tau}(\overline{\Omega})$ for some $\tau\in(0,1)$, and define the functional $J_{a}^{\Omega}: \homega \longrightarrow \R$ by
$$J_{a}^{\Omega}(u):=\frac{\frac{C_{n,\alpha}}{2}\int_{\R^n} \int_{\R^n} \frac{|u(x) - u(y) |^2}{|x-y|^{n+\alpha}} dx dy -\gamma \int_{\Omega} \frac{u^2}{|x|^{\alpha}}dx -  \int_{\Omega} a u^2 dx }{\left(\int_{\Omega} \frac{|u|^{\crits}}{|x|^s}dx\right)^{\frac{2}{\crits}}},$$
in such a way that 
$$\mu_{\gamma,s,\alpha,a}(\Omega) := \inf \left\{ J_{a}^{\Omega}(u) : u \in \homega \setminus \{0\}  \right\}.
$$

We now prove the following proposition, which gives analytic conditions for the  existence of extremals  for $\mu_{\gamma,s,\alpha,a} (\Omega).$
 
\begin{proposition}\label{prop:exist}
Let $\Omega$ be a bounded domain in $\R^n$ $ (n > \alpha)$ such that $0 \in \Omega,$ and assume that $0 \le \gamma < \gamma_H(\alpha)$ and $0\le s \le \alpha.$
\begin{enumerate}
\item  If $\mu_{\gamma,s,\alpha,a} (\Omega) <\mu_{\gamma,s,\alpha}(\R^n) $, then there are extremals for $\mu_{\gamma,s,\alpha,a}(\Omega)$ in $\homega.$
 
\item If $a(x)$ is a constant $\lambda$, with $0<\lambda < \lambda_1(\LO)$ and if $s < \alpha,$ then $\mu_{\gamma,s,\alpha,a}(\Omega)>0.$
\end{enumerate}
\end{proposition}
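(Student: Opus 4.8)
The two parts are handled separately; both rest on elementary inequalities for the quadratic form $Q(u):=\|u\|^2_{\homega}-\gamma\int_\Omega\frac{u^2}{|x|^\alpha}$, which by the fractional Hardy inequality and $\gamma<\gamma_H(\alpha)$ satisfies $Q(u)\ge(1-\gamma/\gamma_H(\alpha))\|u\|^2_{\homega}\ge0$, so I set this up first.

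\emph{Part (2).} Besides $Q\ge0$, I would use that $Q(u)\ge\lambda_1(\LO)\int_\Omega u^2$ by the very definition of $\lambda_1(\LO)$. Writing the numerator of $J_\lambda^\Omega(u)$ as $\theta\,Q(u)+\big((1-\theta)Q(u)-\lambda\int_\Omega u^2\big)$ and choosing $\theta:=1-\lambda/\lambda_1(\LO)\in(0,1)$ makes the second bracket nonnegative, so the numerator is $\ge(1-\lambda/\lambda_1(\LO))\,Q(u)$. Since $s<\alpha$ forces $\crits>2$, extending $u\in\homega$ by zero to $\R^n$ and invoking the fractional Hardy--Sobolev inequality \eqref{def:mu} gives $Q(u)\ge\mu_{\gamma,s,\alpha}(\R^n)\big(\int_\Omega\frac{|u|^{\crits}}{|x|^s}\big)^{2/\crits}$ with $\mu_{\gamma,s,\alpha}(\R^n)>0$. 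Dividing, $J_\lambda^\Omega(u)\ge(1-\lambda/\lambda_1(\LO))\,\mu_{\gamma,s,\alpha}(\R^n)>0$ for every $u\neq0$, whence $\mu_{\gamma,s,\alpha,\lambda}(\Omega)>0$. This part has no real obstacle.

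\emph{Part (1).} I would run a Brezis--Nirenberg type argument. Take a minimizing sequence $(u_k)\subset\homega$ with $\int_\Omega\frac{|u_k|^{\crits}}{|x|^s}\,dx=1$. H\"older against the locally bounded power $|x|^{2s/(\crits-2)}$ gives $\int_\Omega u_k^2\le C$, and since $Q(u_k)-\int_\Omega au_k^2\to\mu:=\mu_{\gamma,s,\alpha,a}(\Omega)$ while $Q(u_k)\ge(1-\gamma/\gamma_H(\alpha))\|u_k\|^2_{\homega}$ and $|\int_\Omega au_k^2|\le\|a\|_\infty C$, the sequence $(u_k)$ is bounded in $\homega$ (in particular $\mu>-\infty$). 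Passing to a subsequence, $u_k\rightharpoonup u$ in $\homega$, $u_k\to u$ strongly in $L^2(\Omega)$ (compact embedding of the fractional space on a bounded domain) and a.e.; set $v_k:=u_k-u\rightharpoonup0$. Then I would establish the four splittings: $\|u_k\|^2_{\homega}=\|u\|^2_{\homega}+\|v_k\|^2_{\homega}+o(1)$ (Hilbert structure); $\int_\Omega\frac{u_k^2}{|x|^\alpha}=\int_\Omega\frac{u^2}{|x|^\alpha}+\int_\Omega\frac{v_k^2}{|x|^\alpha}+o(1)$, the cross term vanishing because $w\mapsto\int_\Omega\frac{uw}{|x|^\alpha}$ is a continuous linear form on $\h$ by the Hardy inequality; $\int_\Omega au_k^2=\int_\Omega au^2+o(1)$ (strong $L^2$ convergence); and the Brezis--Lieb identity $\int_\Omega\frac{|u_k|^{\crits}}{|x|^s}=\int_\Omega\frac{|u|^{\crits}}{|x|^s}+\int_\Omega\frac{|v_k|^{\crits}}{|x|^s}+o(1)$. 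Setting $a_0:=\int_\Omega\frac{|u|^{\crits}}{|x|^s}$ and $b:=\lim_k\int_\Omega\frac{|v_k|^{\crits}}{|x|^s}$ (so $a_0+b=1$), I combine: the numerators converge to $\mu$ and split as $\big(Q(u)-\int_\Omega au^2\big)+Q(v_k)+o(1)$ with $Q(v_k)\ge0$; the definition of $\mu$ gives $Q(u)-\int_\Omega au^2\ge\mu\,a_0^{2/\crits}$; and $v_k\in\homega\subseteq\h$ with \eqref{def:mu} (cf.\ Proposition \ref{prop:val:best:cst}) gives $Q(v_k)\ge\mu_{\gamma,s,\alpha}(\R^n)\,b_k^{2/\crits}$. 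A short case analysis on the sign of $\mu$, using the \emph{strict} inequality $\mu<\mu_{\gamma,s,\alpha}(\R^n)$, the positivity $\mu_{\gamma,s,\alpha}(\R^n)>0$, and the subadditivity $a_0^{2/\crits}+b^{2/\crits}\ge(a_0+b)^{2/\crits}=1$, then forces $b=0$ and $a_0=1$. Hence $u\neq0$ with $\int_\Omega\frac{|u|^{\crits}}{|x|^s}=1$ and $Q(u)-\int_\Omega au^2=\mu$, i.e.\ $u$ is an extremal (one may take $u\ge0$ by replacing $u_k$ with $|u_k|$, which does not increase the numerator).

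The main obstacle is the concentration analysis in part (1): obtaining, in the nonlocal setting, the clean decoupling of all four functionals along $u_k=u+v_k$ — in particular the Brezis--Lieb identity for the critical Hardy--Sobolev term and the splitting (with vanishing cross term) of the subcritical but noncompact Hardy term — and then extracting $a_0=1$ from the sign/subadditivity bookkeeping. The boundedness of the minimizing sequence, the Rellich compactness, and all of part (2) are routine.
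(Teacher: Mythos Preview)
Your proposal is correct and follows essentially the same route as the paper's proof: a normalized minimizing sequence, boundedness via the fractional Hardy inequality, Brezis--Lieb splitting of all four terms, and the strict inequality $\mu_{\gamma,s,\alpha,a}(\Omega)<\mu_{\gamma,s,\alpha}(\R^n)$ to force the concentrating piece to vanish. Your Part~(2) reaches the same conclusion with the slightly sharper bound $(1-\lambda/\lambda_1)\,\mu_{\gamma,s,\alpha}(\R^n)$ in place of the paper's $(1-\lambda/\lambda_1)(1-\gamma/\gamma_H(\alpha))\,\mu_{0,s,\alpha,0}(\R^n)$; and the reference to Proposition~\ref{prop:val:best:cst} is unnecessary (the inequality $Q(v_k)\ge\mu_{\gamma,s,\alpha}(\R^n)\,b_k^{2/\crits}$ is just the definition of $\mu_{\gamma,s,\alpha}(\R^n)$ applied to $v_k\in\homega\subset\h$).
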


\begin{proof}
Let $(u_k)_{k \in \N} \subset \homega \setminus \{0\}$ be a minimizing sequence for $ \mu_{\gamma,s,\alpha,a} (\Omega),$ that is,
$$ J^\Omega_a (u_k) = \mu_{\gamma,s,\alpha,a}(\Omega) +o(1) \ \text{ as } k \to \infty. $$ 
Up to multiplying by a constant, we may assume that 
\begin{equation}\label{minim:norm}
\int_{\Omega} \frac{|u_k|^{\crits}}{|x|^{s}}\ dx=1
\end{equation}
\begin{equation} \label{lim:minim}
\frac{C_{n,\alpha}}{2}\int_{\R^n} \int_{\R^n} \frac{|u_k(x) - u_k(y) |^2}{|x-y|^{n+\alpha}} dx dy - \int_\Omega  \left(  \frac{\gamma}{|x|^\alpha}  + a  \right) u^2_k dx  = \mu_{\gamma,s,\alpha,\lambda}(\Omega) +o(1) 
\end{equation}
as $k\to +\infty$. By (\ref{minim:norm}), we have $\displaystyle \int_\Omega
 u_k^2 dx \le C <\infty \text{ for all } k.$ Since $0 \le \gamma<\gamma_H(\alpha),$ the fractional Hardy inequality combined with (\ref{lim:minim}) yields that 
$\displaystyle \|u_k\|_ {\homega} \le C$ for all $k$. It then follows that there exists $u \in \homega$ such that, up to a subsequence, such that $(u_k)$ goes to $u$ weakly in $\homega$ and strongly in $L^2(\Omega)$ as $k \to \infty.$

\smallskip\noindent We first show that $ \int_{\Omega} \frac{|u|^{\crits}}{|x|^{s}}dx =1.$ Define $\theta_k = u_k -u $ for all $k \in \N$. It follows from the boundedness in $\homega$ that, up to a subsequence, we have that $\theta_k\rightharpoonup 0$ weakly in $\homega$, strongly in $L^2(\Omega)$ as $k \to \infty$, and $\theta_k(x)\to 0$ for a.e. $x\in\Omega$ as $k\to +\infty$. Hence, by the Brezis-Lieb lemma (see \cite{Brezis-Lieb} and \cite{Yang}), we get that 
\begin{equation*}
\int_{\R^n} \int_{\R^n} \frac{|u_k(x) - u_k(y) |^2}{|x-y|^{n+\alpha}} dx dy = \int_{\R^n} \int_{\R^n} \frac{|\theta_k(x) - \theta_k(y) |^2}{|x-y|^{n+\alpha}} dx dy +\int_{\R^n} \int_{\R^n} \frac{|u(x) - u(y) |^2}{|x-y|^{n+\alpha}} dx dy +o(1),
\end{equation*}
\begin{equation*}
  1=\int_{\Omega} \frac{|u_k|^{\crits}}{|x|^{s}}dx= \int_{\Omega} \frac{|\theta_k|^{\crits}}{|x|^{s}}dx+ \int_{\Omega} \frac{|u|^{\crits}}{|x|^{s}}dx+o(1),
\end{equation*}
$$\int_{\Omega} \frac{u_k^{2}}{|x|^{\alpha}}dx= \int_{\Omega} \frac{\theta_k^{2}}{|x|^{\alpha}}dx+ \int_{\Omega} \frac{u^{2}}{|x|^{\alpha}}dx+o(1),\hbox{ and } \int_\Omega u_k^2 dx = \int_\Omega u^2 dx +o(1), $$
as $k \to \infty.$ Thus, we have
\begin{align} \label{eq:BL:lemma}
\begin{split}
\mu_{\gamma,s,\alpha, a}(\Omega)&=\left[ \frac{C_{n,\alpha}}{2}\int_{\R^n} \int_{\R^n} \frac{|u(x) - u(y) |^2}{|x-y|^{n+\alpha}} dx dy - \int_\Omega  \left(  \frac{\gamma}{|x|^\alpha}  + a  \right) u^2 dx \right] \\
&+ \left[ \frac{C_{n,\alpha}}{2}\int_{\R^n} \int_{\R^n} \frac{|\theta_k(x) - \theta_k(y) |^2}{|x-y|^{n+\alpha}} dx dy - \gamma \int_\Omega  \frac{\theta_k^2}{|x|^\alpha} dx \right]+  o(1)
\end{split}
\end{align}
as $k\to +\infty$. The definition of $\mu_{\gamma,s,\alpha,a}(\Omega)$ and $\homega\subset \h$ yield
\begin{equation*}
 \mu_{\gamma,s,\alpha,a}(\Omega) \left( \int_{\Omega}\frac{|u|^{\crits}}{|x|^{s}}dx   \right)^{\frac{2}{2_\alpha^*(s)}} \le \frac{C_{n,\alpha}}{2}\int_{\R^n} \int_{\R^n} \frac{|u(x) - u(y) |^2}{|x-y|^{n+\alpha}} dx dy - \int_\Omega  \left(  \frac{\gamma}{|x|^\alpha}  +a  \right) u^2 dx,
\end{equation*}
and
\begin{equation}\label{HS:ineq:theta}
 \mu_{\gamma,s,\alpha}(\R^n) \left( \int_{\Omega} \frac{|\theta_k|^{\crits}}{|x|^{s}}dx   \right)^{\frac{2}{2_\alpha^*(s)}} \le \frac{C_{n,\alpha}}{2}\int_{\R^n} \int_{\R^n} \frac{|\theta_k(x) - \theta_k(y) |^2}{|x-y|^{n+\alpha}} dx dy - \gamma \int_\Omega  \frac{\theta_k^2}{|x|^\alpha} dx.
\end{equation}
Summing these two inequalities and using (\ref{minim:norm}) and (\ref{eq:BL:lemma}), and passing to the limit $k \to \infty$, we obtain
$$ \mu_{\gamma,s,\alpha}(\R^n) \left(1- \int_{\Omega} \frac{|u|^{\crits}}{|x|^{s}}dx  \right)^{\frac{2}{2_\alpha^*(s)}} \le     \mu_{\gamma,s,\alpha,a}(\Omega) \left( 1 -  \left( \int_{\Omega}\frac{|u|^{\crits}}{|x|^{s}}dx   \right)^{\frac{2}{2_\alpha^*(s)}}\right). $$
Finally, the fact that  $  \mu_{\gamma,s,\alpha,a}(\Omega)< \mu_{\gamma,s,\alpha}(\R^n)$ implies that $ \int_{\Omega}\frac{|u|^{\crits}}{|x|^{s}}dx=1.$ It remains to show that $u$ is an extremal for $\mu_{\gamma,s,\alpha,a}(\Omega)$. For that, note that since $ \int_{\Omega}\frac{|u|^{\crits}}{|x|^{s}}dx=1,$ the definition of $\mu_{\gamma,s,\alpha,a}(\Omega)$ yields that
$$\mu_{\gamma,s,\alpha,a}(\Omega) \le \frac{C_{n,\alpha}}{2}\int_{\R^n} \int_{\R^n} \frac{|u(x) - u(y) |^2}{|x-y|^{n+\alpha}} dx dy - \int_\Omega  \left( \frac{\gamma }{|x|^\alpha}  + a \right) u^2 dx. $$
The second term in the right-hand-side of (\ref{eq:BL:lemma}) is nonnegative due to (\ref{HS:ineq:theta}). Therefore, we get that 

$$ \mu_{\gamma,s,\alpha,a}(\Omega) = \frac{C_{n,\alpha}}{2}\int_{\R^n} \int_{\R^n} \frac{|u(x) - u(y) |^2}{|x-y|^{n+\alpha}} dx dy - \int_\Omega  \left( \frac{\gamma }{|x|^\alpha}  + a  \right)  u^2dx.$$ 
This proves the first claim of the Proposition.

\smallskip\noindent Now assume that $ \lambda\in (0,\lambda_1(\LO))$ and $ 0 \le \gamma < 
\gamma_H(\alpha)$, then for all $ u \in \homega \setminus \{0\},$ 

\begin{align*}
J_\lambda^{\Omega}(u) &= \frac{\frac{C_{n,\alpha}}{2}\int_{\R^n} \int_{\R^n} \frac{|u(x) - u(y) |^2}{|x-y|^{n+\alpha}} dx dy - \int_\Omega  \left(  \frac{\gamma}{|x|^\alpha}  + \lambda \right)u^2 dx}{ \left( \int_\Omega \frac{|u|^{\crits}}{|x|^{s}}dx   \right)^{\frac{2}{2_\alpha^*(s)}}}\\
&\ge \left( 1 - \frac{\lambda}{\lambda_1(\LO)} \right)  \frac{ \frac{C_{n,\alpha}}{2}\int_{\R^n} \int_{\R^n} \frac{|u(x) - u(y) |^2}{|x-y|^{n+\alpha}} dx dy - \gamma \int_\Omega   \frac{u^2}{|x|^\alpha} dx}{ \left( \int_\Omega \frac{|u|^{\crits}}{|x|^{s}}dx   \right)^{\frac{2}{2_\alpha^*(s)}}}\\
& \ge \left( 1 - \frac{\lambda}{\lambda_1(\LO)} \right) \left( 1- \frac{\gamma}{\gamma_H(\alpha)}\right) \mu_{0,s,\alpha,0}(\Omega) \\
& = \left( 1 - \frac{\lambda}{\lambda_1(\LO)} \right) \left( 1- \frac{\gamma}{\gamma_H(\alpha)}\right) \mu_{0,s,\alpha,0}(\R^n ) >0.
\end{align*}
Therefore, $\mu_{\gamma,s,\alpha,\lambda}(\Omega) > 0.$
\end{proof}

\section{The fractional  Hardy singular interior mass of a domain in the critical case }

In this section, we define the fractional  Hardy singular interior mass of a domain by proving Theorem \ref{Theorem:TheMass}. We shall need the following five lemmae.

\begin{lemma}\label{lem:dist} Assume $0<\beta\leq n-\alpha$, and let $\eta \in C^{\infty}_c(\Omega)$ be a cut-off function such that $0\le \eta(x) \le 1$ in $\Omega,$ and  $\eta(x) \equiv 1$ in $B_\delta(0),$ for some $\delta >0 $ small. Then $x\mapsto \eta(x)|x|^{-\beta}\in \homega$ and there exists $f_\beta\in L^\infty_{loc}(\rn)$ with $f_\beta(x)\geq 0$ on $B_\delta(0)$ and $f_\beta\in C^1(B_\delta(0))$ such that
\begin{equation}\label{eq:dist}
(- \Delta)^{\frac{\alpha}{2}} (\eta |x|^{-\beta})=\Phi_{n,\alpha}(\beta)|x|^{-\alpha}\eta|x|^{-\beta}+f_\beta \quad \hbox{ in }{\mathcal D}^\prime(\Omega\setminus\{0\}),
\end{equation}
in the sense that, if $v_\beta(x):=\eta(x)|x|^{-\beta}$, then for all $\varphi\in C^\infty_c(\Omega\setminus\{0\})$, 
$$ \frac{C_{n,\alpha}}{2}\int_{\R^n} \int_{\R^n} \frac{(v_\beta(x) - v_\beta(y)) (\varphi(x) -\varphi(y))}{|x-y|^{n+\alpha}} dx dy = \Phi_{n,\alpha}(\beta)\int_{\Omega} \frac{v_\beta\varphi}{|x|^{\alpha}}dx+\int_\Omega f_\beta\varphi(x)\, dx. $$
Moreover, if $\beta<\frac{n-\alpha}{2}$, then $v_\beta\in \homega$ and equality \eqref{eq:dist} holds in the classical sense of $\homega$.
\end{lemma}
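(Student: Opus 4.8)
The plan is to compute $(-\Delta)^{\alpha/2}(\eta|x|^{-\beta})$ by splitting off the part where $\eta\equiv 1$. Write $v_\beta = \eta|x|^{-\beta} = |x|^{-\beta} - (1-\eta)|x|^{-\beta}$. On $B_\delta(0)$ we have $1-\eta\equiv 0$, so the function $g_\beta := (1-\eta)|x|^{-\beta}$ is supported in $\Omega\setminus B_\delta(0)$ (together with the region outside $\Omega$ where, after the $\homega$-extension by zero, $g_\beta$ vanishes as well since $\eta\in C^\infty_c(\Omega)$), hence $g_\beta\in C^\infty_c(\rn\setminus\{0\})$ and in particular $g_\beta$ is bounded with bounded derivatives near $0$. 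First I would record from Proposition \ref{rem:M-1}(7) that $(-\Delta)^{\alpha/2}|x|^{-\beta} = \Psi_{n,\alpha}(\beta)|x|^{-\alpha-\beta} + c_{n,\alpha}{\bf 1}_{\{\beta=n-\alpha\}}\delta_0$ in $\mathcal S'(\rn)$; since we test only against $\varphi\in C^\infty_c(\Omega\setminus\{0\})$, the Dirac mass at $0$ never contributes, so on $\Omega\setminus\{0\}$ we get $(-\Delta)^{\alpha/2}|x|^{-\beta} = \Psi_{n,\alpha}(\beta)|x|^{-\alpha-\beta}$ as a distribution there. (I note the statement's $\Phi_{n,\alpha}$ should read $\Psi_{n,\alpha}$.) Then $(-\Delta)^{\alpha/2}g_\beta$ is a smooth function away from the support of $g_\beta$; using the singular-integral representation of $(-\Delta)^{\alpha/2}$ one sees it equals $-c_{n,\alpha}\,\mathrm{p.v.}\!\int_{\rn}\frac{g_\beta(y)}{|x-y|^{n+\alpha}}\,dy$, which near $0$ is $C^\infty$ (the integrand is smooth in $x$ there, with no singularity since $g_\beta$ vanishes on a neighborhood of $0$) and globally bounded on compacta.

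Next I would set
\[
f_\beta(x) := \Psi_{n,\alpha}(\beta)\Big(\eta(x)|x|^{-\alpha-\beta} - |x|^{-\alpha-\beta}\Big) - (-\Delta)^{\alpha/2}g_\beta(x)
= -\Psi_{n,\alpha}(\beta)(1-\eta(x))|x|^{-\alpha-\beta} - (-\Delta)^{\alpha/2}g_\beta(x),
\]
so that combining the two computations gives, in $\mathcal D'(\Omega\setminus\{0\})$,
\[
(-\Delta)^{\alpha/2}v_\beta = \Psi_{n,\alpha}(\beta)|x|^{-\alpha-\beta} - (-\Delta)^{\alpha/2}g_\beta
= \Psi_{n,\alpha}(\beta)|x|^{-\alpha}\,\eta|x|^{-\beta} + f_\beta .
\]
On $B_\delta(0)$ the first term of $f_\beta$ vanishes identically (as $1-\eta\equiv 0$ there), so $f_\beta = -(-\Delta)^{\alpha/2}g_\beta$ on $B_\delta(0)$, which by the previous paragraph is $C^1$ (in fact $C^\infty$) on $B_\delta(0)$; and $f_\beta\geq 0$ there because $-(-\Delta)^{\alpha/2}g_\beta(x) = c_{n,\alpha}\int_{\rn}\frac{g_\beta(y)}{|x-y|^{n+\alpha}}\,dy \geq 0$ since $g_\beta\geq 0$. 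Global local boundedness $f_\beta\in L^\infty_{loc}(\rn)$ follows since away from $0$ everything is smooth and near $0$ the term $(1-\eta)|x|^{-\alpha-\beta}$ vanishes. To make the identity rigorous as stated — a pairing of the Gagliardo bilinear form against $\varphi\in C^\infty_c(\Omega\setminus\{0\})$ — I would verify first that $v_\beta\in\homega$: for $\beta<\frac{n-\alpha}{2}$ this is because $|x|^{-\beta}$ is locally in $\h$ (as noted after Proposition \ref{rem:M-1}) and $\eta$ is a smooth cutoff, so $\eta|x|^{-\beta}\in\h$ with support in $\Omega$; for the full range $\beta\leq n-\alpha$ one checks the two finiteness conditions of Proposition \ref{prop:carac:h} directly — $\int|v_\beta|^{2^\star_\alpha(0)}<\infty$ since $\beta\,\frac{2n}{n-\alpha}<n$ exactly when $\beta<\frac{n-\alpha}{2}$...

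Here lies the one genuine subtlety: for $\beta\in[\frac{n-\alpha}{2},n-\alpha]$ the function $|x|^{-\beta}$ is \emph{not} in $\h$, so $v_\beta$ is not in $\homega$ and the identity must be read purely distributionally, i.e. the Gagliardo double integral on the left is interpreted as the duality pairing $\langle(-\Delta)^{\alpha/2}v_\beta,\varphi\rangle$ which makes sense because $\varphi$ is supported away from $0$ and $v_\beta$ satisfies the tail condition $\int\frac{|v_\beta(x)|}{1+|x|^{n+\alpha}}dx<\infty$ (compact support) — exactly in the spirit of Definition \ref{def:punct}. The cleanest route is: establish \eqref{eq:dist} first for $\beta<\frac{n-\alpha}{2}$ honestly in $\homega$ using the splitting above plus the known $\mathcal S'$ identity for $|x|^{-\beta}$, then observe that for the remaining $\beta$ the \emph{same} pointwise formula $(-\Delta)^{\alpha/2}v_\beta = \Psi_{n,\alpha}(\beta)|x|^{-\alpha}v_\beta + f_\beta$ holds classically on $\Omega\setminus\{0\}$ (both sides are $C^1$ there once one knows $|x|^{-\beta}$ is $C^\infty$ away from $0$ and its fractional Laplacian there is the classical $\Psi_{n,\alpha}(\beta)|x|^{-\alpha-\beta}$), and the distributional pairing against $\varphi\in C^\infty_c(\Omega\setminus\{0\})$ follows by the same singular-integral manipulation that is used throughout for punctured-domain solutions. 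The main obstacle is thus bookkeeping rather than depth: carefully justifying that the Gagliardo bilinear form against a test function supported away from $0$ equals $\int f_0 \varphi$ even when $v_\beta\notin\homega$, which is handled exactly by the device recalled in the paper's Definition \ref{def:punct} (conditions (2)–(3)), and checking the sign $f_\beta\geq 0$ via the positivity of the Riesz kernel.
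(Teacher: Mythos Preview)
Your approach is correct and in fact produces the \emph{same} $f_\beta$ as the paper, just via a slightly different bookkeeping: the paper moves $\eta$ across the Gagliardo bilinear form, writing
\[
\frac{C_{n,\alpha}}{2}\iint\frac{(v_\beta(x)-v_\beta(y))(\varphi(x)-\varphi(y))}{|x-y|^{n+\alpha}}\,dxdy
=\langle(-\Delta)^{\alpha/2}|x|^{-\beta},\eta\varphi\rangle+\int f_\beta\varphi\,dx,
\]
and reads off $f_\beta(x)=C_{n,\alpha}\,\mathrm{p.v.}\!\int\frac{\eta(x)-\eta(y)}{|x-y|^{n+\alpha}}|y|^{-\beta}\,dy$ directly, whereas you split $v_\beta=|x|^{-\beta}-g_\beta$ linearly and compute each piece. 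Both routes are short and, once the algebra is done, give identical remainders on $B_\delta(0)$.

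Two small slips to repair. First, $g_\beta=(1-\eta)|x|^{-\beta}$ is \emph{not} compactly supported: since $\eta\in C^\infty_c(\Omega)$, outside $\mathrm{supp}\,\eta$ one has $g_\beta=|x|^{-\beta}$, which persists to infinity. This does no damage --- what you actually use is that $g_\beta$ is $C^\infty$ on all of $\mathbb{R}^n$, vanishes on $B_\delta(0)$, and satisfies $\int\frac{|g_\beta|}{1+|x|^{n+\alpha}}\,dx<\infty$, all of which hold. Second, there is a sign error in your displayed definition of $f_\beta$: from $(-\Delta)^{\alpha/2}v_\beta=\Psi_{n,\alpha}(\beta)|x|^{-\alpha-\beta}-(-\Delta)^{\alpha/2}g_\beta$ and the target form $\Psi_{n,\alpha}(\beta)\,\eta|x|^{-\alpha-\beta}+f_\beta$, one gets $f_\beta=+\Psi_{n,\alpha}(\beta)(1-\eta)|x|^{-\alpha-\beta}-(-\Delta)^{\alpha/2}g_\beta$, not with a minus on the first term. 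The sign is immaterial on $B_\delta(0)$ (where $1-\eta\equiv 0$), so your positivity and $C^1$ conclusions there are unaffected.
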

\begin{proof} When $\beta<\frac{n-\alpha}{2}$, it follows from Proposition \ref{prop:carac:h} that $x\mapsto \eta(x)|x|^{-\theta}\in \homega$. In the general case, for $\varphi\in C^\infty_c(\Omega\setminus\{0\})$, straightforward computations yield
$$ \frac{C_{n,\alpha}}{2}\int_{\R^n} \int_{\R^n} \frac{(v_\beta(x) - v_\beta(y)) (\varphi(x) -\varphi(y))}{|x-y|^{n+\alpha}} dx dy = \langle (-\Delta)^{\frac{\alpha}{2}}|x|^{-\beta},\eta \varphi\rangle+\int_\Omega f_\beta\varphi\, dx,$$
where
$$f_\beta(x):=C(n,\alpha)\lim_{\eps\to 0}\int_{|x-y|>\eps}\frac{\eta(x)-\eta(y)}{|x-y|^{n+\alpha}}\cdot\frac{1}{|y|^{\beta}}\, dy \quad \hbox{for all $x\in \rn$.}$$
Note that $f_\beta\in L^\infty_{loc}(\rn)$, and for $x\in B_\delta(0)$, we have that
$$f_\beta(x):=C(n,\alpha)\int_{\rn}\frac{1-\eta(y)}{|x-y|^{n+\alpha}}\cdot\frac{1}{|y|^{\beta}}\, dy \geq 0,$$
yielding that $f_\beta\in C^1(B_{\delta}(0))$. Since $\varphi\equiv 0$ around $0$, the lemma is a consequence of \eqref{lap:x:theta:dist}.\end{proof}

\begin{lemma}[A comparison principle via coercivity]\label{lem:comp}
Suppose $\Omega$ be a bounded smooth domain in $\mathbb{R}^n,$  $0<\alpha<2,$ $\gamma < \gamma_H(\alpha)$  and $a(x)\in C^{0,\tau}(\overline{\Omega})$ for some $\tau\in(0,1)$. Assume that  the operator $(-\Delta)^{\frac{\alpha}{2}}-(\frac{\gamma}{|x|^\alpha}+a(x))$ is coercive.  Let  $u$ be a function in $\homega $ that satisfies 
\begin{eqnarray*}
\left\{
\begin{array}{ll}
\displaystyle 
(-\Delta)^{\frac{\alpha}{2}}u-\left(\frac{\gamma}{|x|^\alpha}+a(x)\right)u &\ge 0\quad \textnormal{in $\Omega$}\\
\hfill  u &\ge 0\quad \textnormal{on $\partial\Omega$},
\end{array}
\right.
\end{eqnarray*}
in the sense that $u\geq 0$ in $\rn\setminus\Omega$ and 
$$ \frac{C_{n,\alpha}}{2}\int_{\R^n} \int_{\R^n} \frac{(u(x) - u(y)) (v(x) -v(y))}{|x-y|^{n+\alpha}} dx dy - \gamma \int_{\Omega} \frac{u . v}{|x|^{\alpha}}dx - \int_{\Omega} a(x) u v dx\geq 0 $$
for all $v \in \homega$ with $v \ge 0 $  a.e. in $\Omega.$ Then, $u \ge 0 $ in $\Omega.$
\end{lemma}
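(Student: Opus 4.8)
The plan is to run the classical argument for the weak maximum principle but powered by coercivity rather than by a sign condition on the zeroth-order coefficient. First I would take $v := u_- = \max\{-u,0\}$, the negative part of $u$. Since $u \in \homega$ and $u \ge 0$ on $\R^n \setminus \Omega$, we have $u_- \in \homega$ and $u_- \ge 0$ a.e. in $\Omega$, so $u_-$ is an admissible test function in the hypothesis. Plugging $v = u_-$ into the differential inequality gives
$$\frac{C_{n,\alpha}}{2}\int_{\R^n}\int_{\R^n}\frac{(u(x)-u(y))(u_-(x)-u_-(y))}{|x-y|^{n+\alpha}}\,dxdy - \gamma\int_\Omega \frac{u\,u_-}{|x|^\alpha}\,dx - \int_\Omega a(x)\,u\,u_-\,dx \ge 0.$$

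Next I would use the pointwise algebraic identities $u = u_+ - u_-$ and the fact that $u_+ u_- = 0$, together with the elementary inequality
$$(u(x)-u(y))(u_-(x)-u_-(y)) \le -(u_-(x)-u_-(y))^2 \quad \text{for all } x,y,$$
which is checked by splitting into the cases according to the signs of $u(x)$ and $u(y)$. Substituting $u\,u_- = -u_-^2$ on $\Omega$ into the second and third integrals, the displayed inequality becomes
$$-\frac{C_{n,\alpha}}{2}\int_{\R^n}\int_{\R^n}\frac{(u_-(x)-u_-(y))^2}{|x-y|^{n+\alpha}}\,dxdy + \gamma\int_\Omega \frac{u_-^2}{|x|^\alpha}\,dx + \int_\Omega a(x)\,u_-^2\,dx \ge 0,$$
that is,
$$\frac{C_{n,\alpha}}{2}\int_{\R^n}\int_{\R^n}\frac{(u_-(x)-u_-(y))^2}{|x-y|^{n+\alpha}}\,dxdy - \gamma\int_\Omega \frac{u_-^2}{|x|^\alpha}\,dx - \int_\Omega a(x)\,u_-^2\,dx \le 0.$$
The left-hand side is exactly the quadratic form associated to $(-\Delta)^{\alpha/2} - (\tfrac{\gamma}{|x|^\alpha} + a)$ evaluated at $u_- \in \homega$. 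By the coercivity hypothesis there is a constant $c_0 > 0$ with this quadratic form bounded below by $c_0 \|u_-\|_{\homega}^2$, so $c_0\|u_-\|_{\homega}^2 \le 0$, forcing $u_- \equiv 0$, i.e. $u \ge 0$ in $\Omega$.

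The step I expect to require the most care is the justification that $u_-$ is genuinely an admissible test function and that the bilinear form manipulations are legitimate at the level of $\homega$ rather than merely formally: one needs that the negative part operation is bounded on $\homega$ (Gagliardo seminorm does not increase under truncation, using $|u_-(x) - u_-(y)| \le |u(x)-u(y)|$), that $u_- = 0$ a.e. on $\R^n \setminus \Omega$ so indeed $u_- \in \homega$, and that all three integrals in the weak formulation converge when tested against $u_-$ — the Hardy term being controlled by $\gamma < \gamma_H(\alpha)$ via the fractional Hardy inequality, and the $a$-term by $a \in L^\infty(\Omega)$ and $u_- \in L^2(\Omega)$. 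The pointwise inequality $(u(x)-u(y))(u_-(x)-u_-(y)) \le -(u_-(x)-u_-(y))^2$ is the only genuinely nonlocal ingredient and is the nonlocal replacement for the local identity $\nabla u \cdot \nabla u_- = -|\nabla u_-|^2$; everything else is a routine consequence of coercivity.
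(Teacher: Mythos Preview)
Your proposal is correct and follows essentially the same approach as the paper: test the weak inequality with $u_-$, use $u\,u_- = -u_-^2$ for the local terms, bound the nonlocal bilinear term via the pointwise inequality $(u(x)-u(y))(u_-(x)-u_-(y)) \le -(u_-(x)-u_-(y))^2$, and conclude $u_-\equiv 0$ from coercivity. The paper organizes the nonlocal estimate by splitting into the regions $\{u\ge 0\}$ and $\{u<0\}$ rather than invoking the pointwise inequality directly, but this is the same computation.
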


\begin{proof}
Let $u_-(x) = - \min( u(x) , 0 )$ be the negative part of $u$.  It follows from Proposition \ref{prop:carac:h} that $u_-\in\homega$. We can therefore use it as a test function  to get 
$$   \langle L u, u_-\rangle:= \frac{C_{n,\alpha}}{2} \int_{\R^n} \int_{\R^n} \frac{(u(x) - u(y)) (u_-(x) -u_-(y))}{|x-y|^{n+\alpha}} dx dy - \gamma \int_{\Omega} \frac{u  u_-}{|x|^{\alpha}}dx - \int_{\Omega} a(x) u u_- dx\geq 0$$
Let 
$$ \Omega^+ :=\{ x : u(x) \ge 0 \} \quad \text{ and } \quad  \Omega^- := \{ x : u(x) < 0 \}. $$
Straightforward computations yield 
\begin{align*}
0\leq   - \langle L u_- , u_- \rangle & - \frac{C_{n,\alpha}}{2} \int_{\Omega^-} \int_{\Omega^+} \frac{(u(x) - u(y)) u_-(y)}{|x-y|^{n+\alpha}} dx dy \\
& + \frac{C_{n,\alpha}}{2} \int_{\Omega^+} \int_{\Omega^-} \frac{(u(x) - u(y)) u_-(x)}{|x-y|^{n+\alpha}} dx dy,
\end{align*}
which yields via coercivity
$$ c \|u_-\|^2_{\h} \le   \langle L u_- , u_- \rangle \le 0.  $$ 
Thus, $u_- \equiv 0$, and therefore, $u \ge 0 $ on $\Omega.$
\end{proof}

\begin{lemma}\label{lem:control:u}
Assume that $u \in \homega $ is a weak solution of 
\begin{equation*}
 (-\Delta)^{\frac{\alpha}{2}}u-\left(\frac{\gamma+O(|x|^\tau)}{|x|^\alpha}\right) u=0\quad \textnormal{ in }\homega,
\end{equation*}
for some $\tau > 0.$ If $u \not \equiv 0$ and $u \ge 0$, then there exists a constant $C > 0$ such that  
$$C^{-1} \le |x|^{\Bm} u(x) \le C  \  \text{ for } x \to 0, \ x \in \Omega. $$
\end{lemma}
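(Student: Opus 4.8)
The statement is a two-sided estimate $C^{-1}\le |x|^{\bm}u(x)\le C$ as $x\to 0$ for a nonnegative nontrivial solution of $(-\Delta)^{\frac{\alpha}{2}}u - \frac{\gamma + O(|x|^\tau)}{|x|^\alpha}u = 0$. The natural route is to mimic the argument already used in the proof of Theorem \ref{th:asymp:ext}: factor out the singular weight by setting $v(x) := |x|^{\bm}u(x)$ and transform the equation into a problem for the weighted operator $(-\Delta_{\bm})^{\frac{\alpha}{2}}$ on a ball, to which one can apply the interior regularity and Harnack machinery (Proposition \ref{prop:M-1}, the Harnack inequality of Abdellaoui-Medina-Peral-Primo \cite{AMPP.CZ}). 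First I would localize: pick $\delta>0$ small so that $B_\delta(0)\subset\subset\Omega$ and, using a cut-off $\eta$ equal to $1$ on $B_{\delta/2}(0)$, replace $u$ by $\eta u$, paying the price of a bounded (in fact $C^1$ away from $0$) right-hand side supported away from the origin. Then I would invoke the Ground State Representation (Lemma \ref{lem:ground:rep}) and the definition of $\bm$ (so that $\Psi_{n,\alpha}(\bm)=\gamma$) to see that $v=|x|^{\bm}u$ weakly solves, near $0$,
\begin{equation*}
(-\Delta_{\bm})^{\frac{\alpha}{2}} v = \frac{O(|x|^{\tau})}{|x|^\alpha}v + (\text{terms regular away from }0),
\end{equation*}
in the weighted space $H^{\frac{\alpha}{2},\bm}_0$.

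\textbf{Upper bound.} Having the equation for $v$, the $L^\infty$ bound near $0$ is obtained by a Moser/De Giorgi iteration entirely parallel to the proof of Proposition \ref{prop:M-1}: the coefficient $O(|x|^\tau)|x|^{-\alpha}$ replaces the nonlinear term $v^{\crits-2}|x|^{-s-\bm\crits}$, and since $\tau>0$ this coefficient is actually better behaved in the relevant Hardy-Sobolev scaling, so one can absorb the small-mass piece exactly as before (choose a small ball on which the relevant integral of the weight is below a fixed threshold, then iterate). This gives $v\in L^\infty(B_{\delta'}(0))$, i.e. $u(x)\le C|x|^{-\bm}$ near $0$.

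\textbf{Lower bound.} For $C^{-1}|x|^{-\bm}\le u(x)$, the point is that $v\ge 0$ is a nontrivial weak supersolution of $(-\Delta_{\bm})^{\frac{\alpha}{2}}v \ge -\frac{C|x|^\tau}{|x|^\alpha}v$, hence (after absorbing the lower-order term, again using $\tau>0$) an $(-\Delta_{\bm})^{\frac{\alpha}{2}}$-supersolution with controlled right-hand side on a small ball; the weak Harnack inequality for the weighted fractional operator (Theorem 3.4 and Lemma 3.10 in \cite{AMPP.CZ}, as cited in Proposition \ref{prop:M-1}) then yields $\inf_{B_{\delta'}(0)} v \ge c \|v\|_{L^1(B_{2\delta'}(0))} > 0$, the positivity of the right-hand side being guaranteed because $u\not\equiv 0$, $u\ge 0$, together with the strong maximum principle for $(-\Delta)^{\frac{\alpha}{2}}$ (which forces $u>0$ in $\Omega\setminus\{0\}$ and hence $v$ strictly positive at some interior point, so its $L^1$ norm on any ball is positive). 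This gives $u(x)\ge C^{-1}|x|^{-\bm}$ near $0$, completing the proof.

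\textbf{Main obstacle.} The routine parts are the iteration and the citations; the delicate point is justifying that $v=|x|^{\bm}u$ genuinely lies in the weighted space $H^{\frac{\alpha}{2},\bm}_0(\R^n)$ (or at least locally, after the cut-off) and that the Ground State Representation applies to a merely-$\homega$ solution rather than a $C_c^\infty$ function — one must approximate and pass to the limit, exactly as in the discussion preceding Proposition \ref{prop:M-1}, and check that the perturbative term $O(|x|^\tau)|x|^{-\alpha}u$ is integrable against the relevant test functions so that all the weak formulations are consistent near $0$. The other subtlety is making sure the contributions of the cut-off error $f_{\bm}$ (in the spirit of Lemma \ref{lem:dist}) do not spoil either the Moser iteration or the Harnack step; since $f_{\bm}$ is bounded and supported away from $0$ this is handled by standard interior estimates, but it must be stated carefully.
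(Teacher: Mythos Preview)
Your proposal is correct and follows essentially the same route as the paper: transform via the ground state representation to $v=|x|^{\bm}u$ and the weighted operator $(-\Delta_{\bm})^{\frac{\alpha}{2}}$, then obtain the upper bound by the Moser iteration of Proposition~\ref{prop:M-1} and the lower bound by the weak Harnack inequality of \cite{AMPP.CZ} (Theorem~3.4 and Lemma~3.10). The paper's proof is terser and simply points to those two ingredients, while you spell out the localization, the handling of the cut-off error, and the passage from $\homega$ to the weighted space; these are exactly the details one would need to fill in, and your identification of them as the main technical checks is accurate.
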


\begin{proof}
We  use the weak Harnack inequality to  prove the lower bound. Indeed, using  Theorem 3.4 and Lemma 3.10 in \cite{AMPP.CZ}, we get that there exists $ C_1 > 0 $ such that for $ \delta_1 >0 $ small enough, 
\begin{equation*}
u(x) \ge C_1 |x|^{-\Bm} \text{ in } B_{\delta_1}.
\end{equation*}
The other inequality goes as in the iterative scheme used to prove Proposition \ref{prop:M-1}.\end{proof}

\begin{lemma}[See Fall-Felli \cite{Fall_Felli}]\label{lem:ext:fall:felli}
Consider an open subset $\omega\subset \Omega$ with $0\in \omega$, and a function $h\in C^1(\omega)$  such that for some $\tau>0$, 
$$|h(x)|+|x|\cdot|\nabla h(x)|\leq C|x|^{\tau-\alpha}\hbox{ for all }x\in\omega\setminus\{0\}.$$
Let $u \in \homega$ be a weak solution of \begin{equation*}
 (-\Delta)^{\frac{\alpha}{2}}u-\frac{\gamma}{|x|^\alpha} u=h(x)u\quad \textnormal{ in }\omega\subset\Omega,
\end{equation*}
in the sense that for all $\varphi\in C^\infty_c(\omega)$, 
$$\frac{C_{n,\alpha}}{2}\int_{\R^n} \int_{\R^n} \frac{(u(x) - u(y)) (\varphi(x)-\varphi(y))}{|x-y|^{n+\alpha}} dx dy - \gamma \int_{\Omega} \frac{u \varphi}{|x|^{\alpha}}dx = \int_{\Omega} h(x) u \varphi\, dx.$$
Assume further that there exists $C>0$ such that 
\begin{equation*}
  C^{-1}\le |x|^{\Bm} u(x) \le C \ \text{ for } x \to 0, \ x \in \Omega.
\end{equation*}
Then there exists  $l >0$ such that  
\begin{equation*}
\lim\limits_{x \to 0} |x|^{\Bm} u(x) = l.
\end{equation*}
\end{lemma}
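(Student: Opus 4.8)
The strategy is to reduce the equation $(-\Delta)^{\frac{\alpha}{2}}u - \frac{\gamma}{|x|^\alpha}u = h(x)u$ to a problem in the weighted fractional space $H^{\frac{\alpha}{2},\Bm}_0(\R^n)$, exactly as was done in Section 3, and then extract the limit via a compactness-plus-uniqueness argument. First I would set $v(x) := |x|^{\Bm}u(x)$, so that by the Ground State Representation (Lemma \ref{lem:ground:rep}) and the definition of $\Bm$ (i.e. $\Psi_{n,\alpha}(\Bm)=\gamma$), $v$ solves the degenerate equation
\begin{equation*}
(-\Delta_{\Bm})^{\frac{\alpha}{2}}v = h(x)v\quad\text{in } \omega,
\end{equation*}
and the two-sided bound on $|x|^{\Bm}u$ becomes $C^{-1}\le v(x)\le C$ near $0$. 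Since $|h(x)|\le C|x|^{\tau-\alpha}$ is a strictly subcritical perturbation (the Hardy-critical weight is $|x|^{-\alpha}$, and $\tau>0$), one expects $v$ to have a genuine (non-oscillating) limit at $0$.

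The core of the argument is a blow-up/scaling analysis. For $r>0$ small set $v_r(x):=v(rx)$; then $v_r$ solves $(-\Delta_{\Bm})^{\frac{\alpha}{2}}v_r = r^\alpha h(rx)v_r$ with $|r^\alpha h(rx)|\le C|x|^{\tau-\alpha}r^\tau\to 0$ locally uniformly on $\R^n\setminus\{0\}$ as $r\to 0$, while $C^{-1}\le v_r\le C$ uniformly. Using the regularity theory for $(-\Delta_\beta)^{\frac{\alpha}{2}}$ (the De Giorgi--Nash--Moser type estimates and Harnack inequality from Abdellaoui--Medina--Peral--Primo \cite{AMPP.CZ}, already invoked in the proof of Proposition \ref{prop:M-1}, together with the interior Schauder-type estimates of Jin--Li--Xiong \cite{Li_JEMS}), I would obtain uniform $C^{0,\sigma}_{loc}(\R^n\setminus\{0\})$ — and then $C^1_{loc}$ — bounds on the family $\{v_r\}$, so that along any sequence $r_i\to 0$ a subsequence converges in $C^0_{loc}(\R^n\setminus\{0\})$ to some $v_0$ which is a bounded positive solution of $(-\Delta_{\Bm})^{\frac{\alpha}{2}}v_0=0$ on $\R^n\setminus\{0\}$, bounded between $C^{-1}$ and $C$. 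The next step — a Liouville-type classification — shows that such a $v_0$ must be constant: undoing the transformation, $u_0(x)=|x|^{-\Bm}v_0$ solves $(-\Delta)^{\frac{\alpha}{2}}u_0=\frac{\gamma}{|x|^\alpha}u_0$ with $u_0\asymp |x|^{-\Bm}$, and the homogeneity/uniqueness of the exponent $\Bm$ in $(0,\frac{n-\alpha}{2})$ (Proposition \ref{rem:M-1}) forces $u_0(x)=\ell|x|^{-\Bm}$, i.e. $v_0\equiv \ell$. Finally one upgrades this subsequential convergence to a genuine limit: were $\limsup_{x\to 0}v(x)>\liminf_{x\to 0}v(x)$, one could pick two sequences $r_i,r_i'\to 0$ realizing the two values, producing two distinct constant blow-up limits, contradicting that both equal the same $\ell$ — or, more robustly, one runs a monotonicity/energy (Almgren frequency) argument à la Fall--Felli \cite{Fall_Felli} to rule out oscillation. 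Hence $\lim_{x\to 0}|x|^{\Bm}u(x)=\ell$ exists, and $\ell>0$ by the lower bound.

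The main obstacle is the Liouville-type step together with the passage from subsequential to full convergence: the fractional equation is nonlocal, so a blow-up limit on $\R^n\setminus\{0\}$ must be shown to genuinely satisfy the limiting PDE globally (one has to control the tails $\int |v_r(y)|/(1+|y|)^{n+\alpha}\,dy$, which requires knowing the behaviour of $v$ both near $0$ and away from $0$, not just the local bound), and the classification of bounded solutions of $(-\Delta_{\Bm})^{\frac{\alpha}{2}}w=0$ on the punctured space is exactly the kind of sharp removable-singularity/Liouville statement that is delicate in the nonlocal setting. Since the lemma is attributed to Fall--Felli, I would in practice cite their monotonicity-formula machinery for this step rather than reprove it, and devote the written proof to checking that the hypotheses of their result are met after the change of variable $v=|x|^{\Bm}u$.
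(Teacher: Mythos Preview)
Your proposal takes a genuinely different route from the paper. You work intrinsically in the weighted space $H^{\frac{\alpha}{2},\Bm}_0$ via $v=|x|^{\Bm}u$ and propose a blow-up/Liouville scheme for $(-\Delta_{\Bm})^{\alpha/2}$. The paper instead passes through the Caffarelli--Silvestre extension: it invokes Theorem~1.1 of Fall--Felli \cite{Fall_Felli} as a black box to obtain
\[
\lim_{\tau\to 0}|\tau x|^{\frac{n-\alpha}{2}-\sqrt{(\frac{n-\alpha}{2})^2+\mu}}u(\tau x)=\psi\bigl(0,\tfrac{x}{|x|}\bigr)\quad\text{in }C^1_{loc}(B_1(0)\setminus\{0\}),
\]
where $(\mu,\psi)$ is an eigenpair for a weighted spherical problem on $\mathbb{S}^{n+1}_+$. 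The two-sided bound forces the exponent to equal $\Bm$, hence $\mu$ is the first eigenvalue and the eigenspace is one-dimensional. The paper's actual work is then to show that $\psi(0,\cdot)$ is \emph{constant} on $\partial\mathbb{S}^{n+1}_+$: it constructs an explicit positive eigenfunction $\psi_0$ by extending $|x|^{-\Bm}$ harmonically (via Fall's Lemma~3.1) and checks $\psi_0(0,\theta)=|\theta|^{-\Bm}=1$ on the boundary sphere; simplicity of the first eigenvalue gives $\psi=l\psi_0$, so the limit is the number $l$.

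Your approach is conceptually clean but has a real gap at the step you flag yourself. The argument ``if $\limsup v>\liminf v$, two sequences give two distinct constant blow-up limits, contradiction'' does not work: nothing you have established forces the constants arising from different sequences $r_i\to 0$ to coincide. The Liouville classification only tells you each subsequential limit is \emph{some} constant; it does not single out which one. Ruling out oscillation is precisely the content of an Almgren-type monotonicity formula, and in the fractional setting that formula lives on the extension --- which is exactly the Fall--Felli machinery you end up citing anyway. A second, smaller issue: your blow-up $v_r(x)=v(rx)$ must satisfy the nonlocal equation on the rescaled domain, and passing to the limit requires uniform control of the tail $\int_{\rn}|v_r(y)|(1+|y|)^{-n-\alpha}\,dy$, i.e.\ global information on $u$ (not just the local two-sided bound near $0$); this is available here since $u\in\homega$ is compactly supported, but it should be said. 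In short, your outline is correct in spirit, but once you defer the decisive step to \cite{Fall_Felli} you are essentially doing what the paper does --- only the paper's reduction via the extension and the explicit spherical eigenfunction is shorter and avoids the need for a separate Liouville theorem for $(-\Delta_{\Bm})^{\alpha/2}$.
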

\begin{proof} This result is an extension of Theorem 1.1 proved by Fall-Felli \cite{Fall_Felli}, who showed that under these conditions, one has
\begin{equation}\label{cv:u}
\lim_{\tau\to 0}\left|\tau x\right|^{\frac{n-\alpha}{2}-\sqrt{\left(\frac{n-\alpha}{2}\right)^2+\mu}}u(\tau x)=\psi\left(0,\frac{x}{|x|}\right)\hbox{ in }C^{1}_{loc}(B_1(0))\setminus\{0\}
\end{equation}
where $\mu\in\rr$ and $\psi:\mathbb{S}^{n+1}_+:=\{\theta\in \mathbb{S}^{n+1}\; \theta_1>0\}\to\rr$ are respectively an eigenvalue and an eigenfunctions for the problem
\begin{equation}\label{eq:eigen}\left\{\begin{array}{ll}
-\hbox{div}(\theta_1^{1-\alpha}\nabla \psi)=\mu\psi&\hbox{ in }\mathbb{S}^{n+1}_+\\
-\lim_{\theta_1\to 0}{\theta_1}^{1-\alpha}\partial_{\nu}\psi(\theta_1,\theta')=\gamma k_{\alpha/2}&\hbox{ for }\theta'\in\partial\mathbb{S}^{n+1}_+,
\end{array}\right.\end{equation}
where $k_{\alpha/2}$ is a positive constant. We refer to \cite{Fall_Felli} for the explicit definition of this eigenvalue problem, in particular the relevant spaces used via the Caffarelli-Silvestre classical representation \cite{Caffarelli-Silvestre}. It then follows from the pointwise control \eqref{control:u:0} that 
$$\Bm:=\frac{n-\alpha}{2}-\sqrt{\left(\frac{n-\alpha}{2}\right)^2+\mu},$$
and by Proposition 2.3 in Fall-Felli \cite{Fall_Felli}, that $\mu$ is the first eigenvalue of the eigenvalue problem \eqref{eq:eigen}. Then, using classical arguments, we get that the corresponding eigenspace is one-dimensional and is spanned by any positive eigenfunction of \eqref{eq:eigen} (no matter the value of $\mu$, it must necessarily be the first eigenvalue).

\smallskip\noindent We are  left with proving that $\psi(0,x/|x|)$ is independant of $x$. In view of the remarks above, this amonts to prove the existence of a positive eigenfunction that is constant on the boundary.

\medskip\noindent  We now exhibit such an eigenfunction by following the argument in Proposition 2.3 in \cite{Fall_Felli}. First, use (\cite{Fall}, Lemma 3.1) to obtain $\Gamma\in C^0([0,+\infty)\times\rn)\cap C^2((0,+\infty)\times \rn)$ such that
\begin{equation}\label{eq:eigen:2}\left\{\begin{array}{ll}
-\hbox{div}(t^{1-\alpha}\nabla \Gamma)=0&\hbox{ in }(0,+\infty)\times \rn\\
-\lim_{t\to 0}t^{1-\alpha}\partial_{\nu}\Gamma(t,x)=k_{\alpha/2}\frac{\gamma }{|x|^\alpha}&\hbox{ for }x\in \rn=\partial((0,+\infty)\times \rn)\\
\Gamma(0,x)=|x|^{-\Bm}&\hbox{ for }x\in \rn=\partial((0,+\infty)\times \rn).
\end{array}\right.\end{equation}
Moreover, $\Gamma$ is in the relevant function space, $\Gamma>0$ and satisfies
$$\Gamma(z)=|z|^{-\Bm}\Gamma\left(\frac{z}{|z|}\right)\hbox{ for all }z\in (0,+\infty)\times \rn$$
where $|z|=\sqrt{t^2+|x|^2}$ if $z=(t,x)$. In particular, we have that $\Gamma(z)=|z|^{-\Bm}\psi_0(\theta)$ for $\theta:=z/|z|$ and some $\psi_0\in C^0(\overline{\mathbb{S}^{n+1}_+})\cap C^2(\mathbb{S}^{n+1}_+)$. Following \cite{Fall_Felli}, we get that $\psi_0$ is an eigenvalue for the problem \eqref{eq:eigen}. Moreover, $\psi_0>0$. Therefore, $\psi_0$ corresponds to the first eigenvalue and spans the corresponding eigenspace. Finally, we remark that for $\theta\in \partial\mathbb{S}_+^{n+1}$, we have that
$$\psi_0(0,\theta)=\Gamma(0,\theta)=|\theta|^{-\Bm}=1.$$
Since the eigenspace is one-dimensional, there exists $l\in\rr$ such that $\psi=l\cdot \psi_0$. Therefore $\psi(0,x/|x|)=l$ for all $x\in B_1(0)\setminus\{0\}\subset\rn$. It then follows from \eqref{cv:u} that
$$\lim_{x\to 0}|x|^{\Bm}u(x)=l>0,$$
which complete the proof of Lemma \ref{lem:ext:fall:felli}.\end{proof}

\begin{proof}[Proof of Theorem \ref{Theorem:TheMass}]  We first prove the existence of a solution. For $\delta >0 $ small enough, let $\eta \in C^{\infty}_c(\Omega)$ be a cut-off function as in Lemma \ref{lem:dist} such that $\eta(x) \equiv 1$ in $B_\delta(0).$  Set $\beta:=\Bp\leq n-\alpha$ in \eqref{eq:dist} and define
$$f(x) := - \left( (- \Delta)^{\frac{\alpha}{2}} - \left(\frac{\gamma}{|x|^{\alpha}}+ a(x) \right) \right) (\eta |x|^{-\Bp})=-f_{\Bp}+\frac{a\eta}{|x|^{\Bp}} \text{ in } \Omega \setminus \{0 \}$$
in the distribution sense. In particular, $f\in C^1(B_{\delta}(0)\setminus \{0\})$ and there exists a positive constant $C>0$ such that  
\begin{equation}\label{bnd:f:0}
|f(x)|+|x|\cdot |\nabla f(x)| \le C |x|^{-\Bp} \ \text{ for } x \neq 0 \text{ close to } 0. 
\end{equation}
In the sequel, we write $\bp:=\Bp$ and $\bm:=\Bm$. Note that the assumption $\gamma > \gamma_{crit}(\alpha)$ implies that  $\bp < \frac{n}{2} < \frac{n+\alpha}{2}.$  Thus, using (\ref{bnd:f:0}) and the fact that  $\bp < \frac{n+\alpha}{2},$ we get that $f \in L^{\frac{2n}{n+\alpha}}(\Omega)$. Since  $L^{\frac{2n}{n+\alpha}}(\Omega) = \left( L^{\frac{2n}{n-\alpha}}(\Omega) \right)'  \subset \left( \homega \right)', $ there exists $g \in \homega$ such that 
$$  \left( (- \Delta)^{\frac{\alpha}{2}} - \left(\frac{\gamma}{|x|^{\alpha}}+ a(x) \right) \right)  g = f \text{ weakly  in } \homega .$$
\noindent Set
\begin{equation}\label{def:u:mass}
H(x) := \frac{\eta(x)}{|x|^{\bp}} + g(x) \text{ for all }  x \in \overline{\Omega} \setminus \{0\}.
\end{equation}
Thanks to \eqref{eq:dist}, $H:\Omega\to\rr$ is a solution to
\begin{equation}\label{eq:u:mass}
\left\{\begin{array}{rl}
(-\Delta)^{\frac{\alpha}{2}}H-\left( \frac{\gamma}{|x|^\alpha}+a(x)\right) H=0 & \text{in }  {\Omega \setminus \{0\}}\\
 H=0 & \text{in } \R^n \setminus \Omega,
\end{array}\right.
\end{equation}
in the sense of Definition \ref{def:punct}. The idea is to now write $f$ as the difference of two positive $C^1$ functions. The decomposition $f=|f|-2f_-$ does not work here since the resulting functions are not necessarily $C^1$. To  smooth out the functions $x\mapsto |x|$ and $x\mapsto x_-$, we consider
$$\varphi_1(x):=\sqrt{1+x^2}\hbox{ and }\varphi_2(x):=\varphi_1(x)-x\hbox{ for all }x\in\rr.$$
It is clear that $\varphi_1,\varphi_2\in C^1(\rr)$ and there exists $C>0$ such that
\begin{equation}\label{ppty:phi}
0\leq \varphi_i(x)\leq C(1+|x|)\,,\, |\varphi_i'(x)|\leq C\hbox{ and }x=\varphi_1(x)-\varphi_2(x)\hbox{ for all }x\in \rr \hbox{ and }i=1,2.
\end{equation}
Define $f_i:=\varphi_i\circ f$ for $i=1,2$. In particular, $f=f_1-f_2$. Let $g_1,g_2\in \homega$ be   solutions to 
\begin{equation}
 (- \Delta)^{\frac{\alpha}{2}} g_i- \left(\frac{\gamma}{|x|^{\alpha}}+ a(x) \right) g_i = f_i\text{ weakly in } \homega\label{eq:f:plus}
\end{equation}
for $i=1,2$. Since $f_1,f_2\geq 0$, Lemma \ref{lem:comp} yields $g_1,g_2\geq 0$. Also
$$\left( (- \Delta)^{\frac{\alpha}{2}} - \left(\frac{\gamma}{|x|^{\alpha}}+ a(x) \right) \right) (g-(g_1-g_2))=f-(f_1-f_2)=0.$$
It  follows from coercivity that  $g= g_1-g_2$. Assuming $g_1 \not\equiv 0$, it follows from Lemma 3.10 in \cite{AMPP.CZ} that there exists $K' >0 $ such that $g_1(x) \ge K' |x|^{-\bm}$ in $B_\delta(0) \setminus \{0\}$.

\medskip\noindent Since $g_1\in \homega$, it follows from \eqref{eq:f:plus} and Theorem 2.1 of Jin-Li-Xiong \cite{Li_JEMS} that $g_1\in C^{0,\tau}_{loc}(\Omega\setminus\{0\})$ for some $\tau>0$. Arguing as in the proof of Theorem \ref{th:asymp:ext}, we get that $g_1\in C^1(\Omega\setminus\{0\})$. Setting 
$$h(x):=\frac{f_1(x)}{g_1(x)}\hbox{ for }x\hbox{ close to }0,$$
we have that $h\in C^1(B_\delta(0))$. Now use (\ref{bnd:f:0}) and \eqref{ppty:phi} to get that 
$$f_1(x) \le C(1+|f(x)|) \le C |x|^{-\bp} = C |x|^{-\bm} |x|^{\alpha -(\bp-\bm)} |x|^{-\alpha} \le K_1  |x|^{-\alpha+ (\alpha -(\bp-\bm))} g_1(x).$$
Using the fact that $\gamma> \gamma_{crit}(\alpha)$
if and only if $\alpha -(\bp-\bm) > 0$, we get that $|h(x)|\leq C|x|^{\tau-\alpha}$ for $x\to 0$ where $\tau:=\alpha -(\bp-\bm)>0$. Therefore, we have that 
$$(- \Delta)^{\frac{\alpha}{2}}g_1 - \frac{\gamma+ O(|x|^{ (\alpha -(\bp-\bm))}) }{|x|^{\alpha}} g_1 = 0 \text{ weakly in } \homega,$$
with $g_1\geq 0$ and $g_1\not\equiv 0$. It then follows from Lemma \ref{lem:control:u} that there exists $c>0$ such that $c^{-1}\leq |x|^{\bm}g_1(x)\leq c $ for $x\in\Omega$, $x\neq 0$ close to $0$.  Arguing as in Claim 2 in the proof of Theorem \ref{th:asymp:ext}, we get that there exists $C>0$ such that 
\begin{equation}\label{bnd:u}
c^{-1}\leq |x|^{\bm}g_1(x)\leq c \hbox{ and }|x|^{\bm+1}|\nabla g_1(x)|\leq C\hbox{ for all }x\in B_\delta(0).
\end{equation}

\medskip\noindent We now deal with the differential of $h$. With the controls \eqref{bnd:f:0}, \eqref{ppty:phi} and \eqref{bnd:u}, we get that
$$|x|\cdot|\nabla h(x)|\leq C|x|^{\tau-\alpha}\hbox{ for }x\in B_{\delta/2}(0)\setminus\{0\}.$$
Now, writing $(-\Delta)^{\frac{\alpha}{2}}g_1-\frac{\gamma}{|x|^\alpha}g_1=h(x)g_1$ in $\Omega$ and using Lemma \ref{lem:ext:fall:felli}, we get that $|x|^{-\bm}g_1(x)$ has a finite limit as $x \to 0$. Note that this is also clearly the case if $g_1\equiv 0$. The same holds for $g_2$. Therefore, there exists a constant $c \in \R$ such that 
$ |x|^{-\bm} g(x) \to c $ as $x \to 0. $
In other words,
$$H(x) = \frac{1}{|x|^{\bp}}+ \frac{c}{|x|^{\bm}}+ o \left( \frac{1}{|x|^{\bm}} \right) \quad \text{ as } x \to 0,$$
and there exists $C>0$ such that $|g(x)|\leq C|x|^{-\bm}$ for all $x\in \Omega$.\par

\medskip\noindent We now prove that $H>0$ in $\Omega\setminus\{0\}$. Indeed, from the above asymptotic expansion we have that $H(x)>0$ for $x\to 0$, $x\neq 0$. Since $\chi H\in \h$ for all $\chi\in C^\infty_c(\rn\setminus\{0\})$, it follows from Proposition \ref{prop:carac:h} that $H_-\in H_0^{\frac{\alpha}{2}}(\Omega\setminus B_\eps(0))$ for some $\eps>0$ small. We then test \eqref{eq:u:mass} against $H_-$ and, arguing as in the proof of Lemma \ref{lem:comp} we get that $H_-\equiv 0$, and then $H\geq 0$. Since $H\not\equiv 0$, $H\in C^1(\Omega\setminus\{0\})$, it follows from the Harnack inequality (see Lemma 3.10 in \cite{AMPP.CZ}) that $H>0$ in $\Omega\setminus\{0\}$. This proves the existence of a solution $u$ to Problem \eqref{Main problem:TheMass} with the relevant asymptotic behavior.

\medskip\noindent We now deal with uniqueness. Assume that there exists another solution $u'$ satisfying the hypothesis of Theorem \ref{Theorem:TheMass}. We define $\bar{u}:=u-u'$. Then $\bar{u}:\Omega\to\rr$ is a solution to
\begin{equation*}
\left\{\begin{array}{rl}
(-\Delta)^{\frac{\alpha}{2}}\bar{u}-\left( \frac{\gamma}{|x|^\alpha}+a(x)\right) \bar{u}=0 & \text{in }  {\Omega \setminus \{0\}}\\
 \bar{u}=0 & \text{in } \R^n \setminus \Omega,
\end{array}\right.
\end{equation*}
in the sense of Definition \ref{def:punct}. Since $|\bar{u}(x)|\leq C|x|^{-\bm}$ for all $x\in \Omega$ where $C>0$ is some uniform constant, then by using Proposition \ref{prop:carac:h} one concludes that $\bar{u}\in \homega$ is a weak solution to 
$$(-\Delta)^{\frac{\alpha}{2}}\bar{u}-\left( \frac{\gamma}{|x|^\alpha}+a(x)\right) \bar{u}=0  \hbox{ in }\Omega,
$$
that is,  for all $\varphi\in\homega$, 
$$\frac{C_{n,\alpha}}{2}\int_{\R^n} \int_{\R^n} \frac{(\bar{u}(x) - \bar{u}(y)) (\varphi(x) -\varphi(y))}{|x-y|^{n+\alpha}} dx dy - \int_{\rn} \left( \frac{\gamma}{|x|^\alpha}+a(x)\right)\bar{u}\varphi\,dx =0.
$$
Taking $\varphi:=\bar{u}$ and using the coercivity, we get that $\bar{u}\equiv 0$, and then $u\equiv u'$, which yields the uniqueness.\end{proof}

\section{Existence of extremals}

This section is devoted to prove the main result, which is Theorem \ref{th:1}. By choosing a  suitable test function, we  estimate the functional $J_a^\Omega(u)$, and  we show that the condition $\mu_{\gamma,s,\alpha,a}(\Omega) < \mu_{\gamma,s,\alpha}(\R^n)$ holds under suitable conditions on the dimension or on the mass of the domain. Recall that Proposition \ref{prop:exist} implies that it is this strict inequality that guarantees the existence of extremals for $\mu_{\gamma,s,\alpha,a}(\Omega)$.

\medskip\noindent We fix  $ a \in C^{0, \tau} (\overline{\Omega}),$ $\tau \in (0,1)$ and $\eta \in C_c^{\infty} (\Omega) $ such that
\begin{equation}\label{def:eta}
\eta\equiv 1\hbox{ in }B_\delta(0)\hbox{ and }\eta\equiv 0\hbox{ in }\rn\setminus B_{2\delta}(0)\hbox{ with }B_{4\delta}(0)\subset \Omega.
\end{equation}
Let $U\in \h$ be an extremal for $\mu_{\gamma,s,\alpha,0}(\rn)$. It follows from Theorem \ref{th:asymp:ext} that, up to multipliying by a nonzero constant, $U$ satisfies for some $\kappa>0$, 
\begin{equation}\label{eq:U}
(-\Delta)^{\frac{\alpha}{2}}U-\frac{\gamma}{|x|^\alpha}U=\kappa \frac{U^{\crits-1}}{|x|^s}\hbox{ weakly in }\h.
\end{equation}
 Moreover, $U\in C^1(\rn\setminus\{0\})$, $U>0$ and 
\begin{equation}\label{asymp:U}
\lim_{|x|\to \infty}|x|^{\bp}U(x)=1.
\end{equation}
Set
\begin{eqnarray}
J_a^\Omega(u):=\frac{\frac{C_{n,\alpha}}{2}\int_{(\rn)^2}\frac{(u(x)-u(y))^2}{|x-y|^{n+\alpha}}\, dxdy-\int_{\Omega}\left(\frac{\gamma}{|x|^\alpha}+a\right) u^2\, dx}{\left(\int_{\Omega}\frac{|u|^{\crits}}{|x|^s}\, dx\right)^{\frac{2}{\crits}}}=\frac{A(u)}{B(u)^{\frac{2}{\crits}}},\label{def:J:A:B}
\end{eqnarray}
where
\begin{equation}\label{def:A:B}
A(u):=\langle u, u\rangle-\int_\Omega au^2\, dx \quad \hbox{ and } \quad B(u):=\int_{\Omega}\frac{|u|^{\crits}}{|x|^s}\, dx 
\end{equation}
with
\begin{eqnarray}\label{def:bracket}
\langle u,v\rangle&:=&\frac{C_{n,\alpha}}{2}\int_{(\rn)^2}\frac{(u(x)-u(y))(v(x)-v(y))}{|x-y|^{n+\alpha}}\, dxdy\\
&&-\int_{\rn}\frac{\gamma}{|x|^\alpha}uv\, dx\hbox{ for }u,v\in \h.\nonumber
\end{eqnarray}
Consider
$$u_\epsilon(x):=\eps^{-\frac{n-\alpha}{2}}U(\eps^{-1}x)\hbox{ for }x\in\rn\setminus\{0\}.$$
It follows from Proposition \ref{prop:carac:h}, that $\eta\ue\in \homega$

\subsection{General estimates for $\eta\ue$}
We define the following bilinear form $B_\eta$ on $\h$ as follows:  For any $\varphi,\psi\in \h$,  \begin{eqnarray}
\qquad B_\eta(\varphi,\psi):=\langle \eta\varphi,\psi\rangle-\langle \varphi,\eta\psi\rangle= \frac{C_{n,\alpha}}{2}\int_{(\rn)^2}\frac{\eta(x)-\eta(y)}{|x-y|^{n+\alpha}}\left(\varphi(y)\psi(x)-\varphi(x)\psi(y)\right)\, dxdy. \label{def:B:eta}
\end{eqnarray} 
This expression makes sense since $\eta\equiv 1$ around $0$ and $\eta\equiv 0$ around $\infty$.  Note that 
\begin{eqnarray}\label{eq:60}
\langle \eta\ue,\eta\ue\rangle&=& \langle \ue, \eta^2\ue\rangle+\eps^{\bp-\bm}B_\eta\left(\frac{\ue}{\eps^{\frac{\bp-\bm}{2}}},\frac{\eta\ue}{\eps^{\frac{\bp-\bm}{2}}}\right).
\end{eqnarray}
It follows from \eqref{eq:U} and the definition of $\ue$ that
$$\langle \ue,\varphi\rangle=\kappa\int_{\rn}\frac{\ue^{\crits-1}}{|x|^s}\varphi\, dx\hbox{ for all }\varphi\in \h.$$
By a change of variable, we get as $\eps\to 0$,
\begin{eqnarray*}
\langle \ue,\eta^2\ue\rangle&=& \kappa\int_{\rn}\frac{\eta^2\ue^{\crits}}{|x|^s}\, dx=\kappa\int_{\rn}\frac{\ue^{\crits}}{|x|^s}\, dx+O\left(\int_{\rn\setminus B_\delta(0)}\frac{\ue^{\crits}}{|x|^s}\, dx\right)\\
&=&\kappa\int_{\rn}\frac{U^{\crits}}{|x|^s}\, dx+O\left(\int_{\rn\setminus B_{\eps^{-1}\delta}(0)}\frac{U^{\crits}}{|x|^s}\, dx\right).
\end{eqnarray*}
 With \eqref{asymp:U}, we get that
\begin{equation}\label{est:0}
\langle \ue,\eta^2\ue\rangle=\kappa\int_{\rn}\frac{U^{\crits}}{|x|^s}\, dx+O\left(\eps^{\frac{\crits}{2}(\bp-\bm)}\right)=\kappa\int_{\rn}\frac{U^{\crits}}{|x|^s}\, dx+o\left(\eps^{\bp-\bm}\right).
\end{equation}
We now deal with the second term of \eqref{eq:60}. First note that 
\begin{equation*}
B_\eta\left(\frac{\ue}{\eps^{\frac{\bp-\bm}{2}}},\frac{\eta\ue}{\eps^{\frac{\bp-\bm}{2}}}\right)=\frac{C_{n,\alpha}}{2}\int_{(\rn)^2}\frac{(\eta(x)-\eta(y))^2}{|x-y|^{n+\alpha}}\frac{\ue(x)}{\eps^{\frac{\bp-\bm}{2}}}\cdot \frac{\ue(y)}{\eps^{\frac{\bp-\bm}{2}}}\, dxdy.
\end{equation*}
It follows from \eqref{asymp:U} and the pointwise control of Theorem \ref{th:asymp:ext} that there exists $C>0$ such that for any $x\in\rn\setminus\{0\}$, we have that
\begin{equation}\label{est:tue}
\lim_{\eps\to 0}\frac{\ue(x)}{\eps^{\frac{\bp-\bm}{2}}}=S(x):=\frac{1}{|x|^{\bp}}\hbox{ and }\left|\frac{\ue(x)}{\eps^{\frac{\bp-\bm}{2}}}\right|\leq \frac{C}{|x|^{\bp}}.
\end{equation}
Since $\eta(x)=1$ for all $x\in B_\delta(0)$ and $\Bp<n$, Lebesgue's convergence theorem yields
\begin{equation}\label{lim:B:2:bis}
\lim_{\eps\to 0}B_\eta\left(\frac{\ue}{\eps^{\frac{\bp-\bm}{2}}},\frac{\eta\ue}{\eps^{\frac{\bp-\bm}{2}}}\right)=\frac{C_{n,\alpha}}{2}\int_{(\rn)^2}\frac{(\eta(x)-\eta(y))^2}{|x-y|^{n+\alpha}}S(x)S(y)\, dxdy=B_\eta(S,\eta S).
\end{equation}
By plugging together \eqref{eq:60}, \eqref{est:0} and \eqref{lim:B:2:bis}, we get as $\eps\to 0$, 
\begin{equation}\label{est:num:1}
\langle \eta\ue,\eta\ue\rangle=\kappa\int_{\rn}\frac{U^{\crits}}{|x|^s}\, dx+B_\eta(S,\eta S)\eps^{\bp-\bm}+o\left(\eps^{\bp-\bm}\right).
\end{equation}
Arguing as in the proof of \eqref{est:0}, we obtain as $\eps\to 0$, 
\begin{equation}\label{est:num:2}
\int_{\rn}\frac{(\eta \ue)^{\crits}}{|x|^s}\, dx=\int_{\rn}\frac{U^{\crits}}{|x|^s}\, dx+o(\eps^{\bp-\bm}).
\end{equation}
As an immediate consequence, we get 
\begin{proposition}\label{prop:val:best:cst}
Suppose that $0 \le s < \alpha <n,$ $0 < \alpha < 2$ and $ 0 \le \gamma < \gamma_H(\alpha).$ Then,  $$\mu_{\gamma,s,\alpha,0}(\Omega) = \mu_{\gamma,s,\alpha}(\R^n).$$
\end{proposition}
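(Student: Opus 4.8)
The plan is to establish the two inequalities $\mu_{\gamma,s,\alpha,0}(\Omega)\le\mu_{\gamma,s,\alpha}(\R^n)$ and $\mu_{\gamma,s,\alpha,0}(\Omega)\ge\mu_{\gamma,s,\alpha}(\R^n)$ separately. The second one is essentially immediate: since $a\equiv 0$ here, the numerator of $J_0^\Omega(u)$ is exactly $\langle u,u\rangle=\frac{C_{n,\alpha}}{2}\int_{(\rn)^2}\frac{(u(x)-u(y))^2}{|x-y|^{n+\alpha}}\,dxdy-\gamma\int_\Omega\frac{u^2}{|x|^\alpha}\,dx$, and because $\homega\subset\h$ (extending functions by zero outside $\Omega$), every competitor $u\in\homega\setminus\{0\}$ is also an admissible competitor for $\mu_{\gamma,s,\alpha}(\R^n)$ with the same value of the Rayleigh quotient. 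Taking the infimum over the smaller class $\homega$ can therefore only be larger, giving $\mu_{\gamma,s,\alpha,0}(\Omega)\ge\mu_{\gamma,s,\alpha}(\R^n)$.

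For the reverse inequality, the idea is to test $J_0^\Omega$ against the truncated extremals $\eta\ue$ and let $\eps\to0$. Concretely, I would take the extremal $U\in\h$ for $\mu_{\gamma,s,\alpha,0}(\rn)=\mu_{\gamma,s,\alpha}(\R^n)$ normalized as in \eqref{eq:U}, form $\ue(x)=\eps^{-\frac{n-\alpha}{2}}U(\eps^{-1}x)$ and $\eta\ue\in\homega$ with $\eta$ as in \eqref{def:eta}. Then the estimates \eqref{est:num:1} and \eqref{est:num:2} that were just derived give, as $\eps\to0$,
\begin{equation*}
\langle\eta\ue,\eta\ue\rangle=\kappa\int_{\rn}\frac{U^{\crits}}{|x|^s}\,dx+O(\eps^{\bp-\bm})\quad\text{and}\quad\int_{\rn}\frac{(\eta\ue)^{\crits}}{|x|^s}\,dx=\int_{\rn}\frac{U^{\crits}}{|x|^s}\,dx+o(\eps^{\bp-\bm}).
\end{equation*}
Hence
\begin{equation*}
\mu_{\gamma,s,\alpha,0}(\Omega)\le J_0^\Omega(\eta\ue)=\frac{\kappa\int_{\rn}\frac{U^{\crits}}{|x|^s}\,dx+O(\eps^{\bp-\bm})}{\left(\int_{\rn}\frac{U^{\crits}}{|x|^s}\,dx+o(\eps^{\bp-\bm})\right)^{2/\crits}}=\kappa\left(\int_{\rn}\frac{U^{\crits}}{|x|^s}\,dx\right)^{1-\frac{2}{\crits}}+o(1).
\end{equation*}
I would then identify the leading constant: using \eqref{eq:U} with test function $U$ itself gives $\langle U,U\rangle=\kappa\int_{\rn}\frac{U^{\crits}}{|x|^s}\,dx$, so that $J_0^{\rn}(U)=\langle U,U\rangle/(\int\frac{U^{\crits}}{|x|^s})^{2/\crits}=\kappa(\int\frac{U^{\crits}}{|x|^s})^{1-2/\crits}$, which equals $\mu_{\gamma,s,\alpha}(\R^n)$ since $U$ is an extremal. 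Letting $\eps\to0$ yields $\mu_{\gamma,s,\alpha,0}(\Omega)\le\mu_{\gamma,s,\alpha}(\R^n)$, and combined with the first part this proves the proposition.

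The only genuinely non-routine point is making sure the family $\eta\ue$ is legitimate and that the error terms really are $o(\eps^{\bp-\bm})$ (and in particular $o(1)$), but this is exactly the content of the preceding computations \eqref{est:0}--\eqref{est:num:2}, which rely on the sharp decay $|x|^{\bp}U(x)\to1$ from Theorem \ref{th:asymp:ext} together with $\bp<n$ to make the relevant integrals converge and the tail contributions small; so all the real work has already been done above and the proposition is indeed an "immediate consequence." I would simply assemble these pieces, being careful only to note that the normalization of $U$ may be adjusted by a positive constant without affecting the Rayleigh quotient.
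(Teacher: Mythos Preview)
Your proof is correct and follows essentially the same approach as the paper: the inequality $\mu_{\gamma,s,\alpha,0}(\Omega)\ge\mu_{\gamma,s,\alpha}(\R^n)$ is immediate from $\homega\subset\h$, and the reverse inequality is obtained by plugging the test functions $\eta\ue$ into $J_0^\Omega$ and invoking the estimates \eqref{est:num:1}--\eqref{est:num:2} to get $J_0^\Omega(\eta\ue)=J_0^{\rn}(U)+O(\eps^{\bp-\bm})\to\mu_{\gamma,s,\alpha}(\R^n)$. Your additional remark identifying $\kappa(\int U^{\crits}/|x|^s)^{1-2/\crits}$ with $J_0^{\rn}(U)$ via $\langle U,U\rangle=\kappa\int U^{\crits}/|x|^s$ is a helpful clarification the paper leaves implicit.
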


\begin{proof}

It follows from the definition of $\mu_{\gamma,s,\alpha} (\Omega)$ that $\mu_{\gamma,s,\alpha,0} (\Omega) \ge \mu_{\gamma,s,\alpha} (\R^n).$ We now show the reverse inequity. Using the estimates \eqref{est:num:1} and \eqref{est:num:2} above, we have as $\eps\to 0$, 
\begin{eqnarray*}
J_0^\Omega (U_\epsilon) &=&\frac{\kappa\int_{\rn}\frac{U^{\crits}}{|x|^s}\, dx}{\left(\int_{\rn}\frac{U^{\crits}}{|x|^s}\, dx\right)^{\frac{2}{\crits}}}+O(\eps^{\bp-\bm})\\
&=&J_0^{\rn}(U)+O(\eps^{\bp-\bm}) =  \mu_{\gamma,s,\alpha} (\R^n) + O(\eps^{\bp-\bm}).
\end{eqnarray*} 
Letting $ {\epsilon \to 0}$ yields  $\mu_{\gamma,s,\alpha,0} (\Omega) \le \mu_{\gamma,s,\alpha} (\R^n)$ from which follows that  $\mu_{\gamma,s,\alpha,0}(\Omega)= \mu_{\gamma,s,\alpha}(\R^n)$.\end{proof}

\subsection{Test functions for  the non-critical case $0 \le \gamma \le \gamma_{crit}(\alpha)$.} 

We now estimate $J(\eta\ue)$ when $0\le\gamma\le\gamma_{crit}(\alpha)$, that is in the case when 
$\bm\geq \frac{n}{2}.$
Note that since $\bm+\bp=n-\alpha$, we have that 
$$\hbox{$\bp-\bm>\alpha$ when $\gamma<\gamma_{crit}(\alpha)$ \quad and \quad $\bp-\bm=\alpha$ if $\gamma=\gamma_{crit}(\alpha).$}$$ 

We start with the following:
\begin{proposition} \label{prop:test:subcrit}
Let $0 \le s <\alpha<2,$ $0 \le \gamma \le \gamma_{crit}(\alpha)$ and $n \ge 2\alpha $. Then, as $\epsilon \to 0$, 
\begin{equation*} 
 \int_\Omega a(\eta\ue)^2 dx=\left\{\begin{array}{rl}
\epsilon^\alpha \left(\int_{\R^n} U^2 dx  \right) a(0)+  o(\epsilon^\alpha) \,\,\,\,\,\,\,\,\,\,\,\, & \text{if }   0 \le \gamma < \gamma_{crit}(\alpha)\\
\omega_{n-1} a(0)\epsilon^\alpha \ln(\epsilon^{-1}) +  o(\epsilon^\alpha \ln \epsilon)  & \text{if } \gamma = \gamma_{crit}(\alpha). 
\end{array}\right.  \end{equation*}
\end{proposition}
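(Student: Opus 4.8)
The plan is to evaluate the integral $\int_\Omega a(\eta u_\epsilon)^2\,dx$ by first localizing, then rescaling, then exploiting the decay rates of $U$ coming from Theorem~\ref{th:asymp:ext}. First I would split $\int_\Omega a(\eta u_\epsilon)^2\,dx = \int_{B_\delta(0)} a u_\epsilon^2\,dx + \int_{\Omega}a(\eta^2-\mathbf 1_{B_\delta})u_\epsilon^2\,dx$. On $B_\delta(0)$, write $a(x)=a(0)+(a(x)-a(0))$ and use $|a(x)-a(0)|\le C|x|^\tau$ (H\"older continuity) to absorb the correction into the error term once the main computation is done; the remaining piece $\int_{\Omega\setminus B_\delta(0)}$ is treated by the pointwise bound $u_\epsilon(x)\le C\epsilon^{\frac{n-\alpha}{2}}|x|^{-\bp}$ from Theorem~\ref{th:asymp:ext}, which gives a contribution of order $\epsilon^{n-\alpha}$, and since $\bp-\bm\le\alpha\le n-\alpha$ in this regime (indeed $n\ge 2\alpha$), this is $o(\epsilon^\alpha)$ — and it is even $o(\epsilon^\alpha\ln\epsilon)$ a fortiori. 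So the whole question reduces to the asymptotics of $a(0)\int_{B_\delta(0)}u_\epsilon^2\,dx$.

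Next I would perform the change of variables $x=\epsilon y$, so that $\int_{B_\delta(0)}u_\epsilon^2\,dx = \epsilon^\alpha\int_{B_{\delta/\epsilon}(0)}U(y)^2\,dy$. Here everything hinges on whether $U\in L^2(\rn)$. By Theorem~\ref{th:asymp:ext}, $U(y)\asymp (|y|^{\bm}+|y|^{\bp})^{-1}$, so $U^2\asymp |y|^{-2\bp}$ at infinity and $\asymp |y|^{-2\bm}$ near $0$. Since $0\le\gamma\le\gamma_{crit}(\alpha)$ we have $\bm\ge n/2$ near the origin — wait, that is the bad direction: $|y|^{-2\bm}$ with $2\bm\ge n$ is \emph{not} integrable near $0$. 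Let me recheck: in this regime $\bm\ge n/2$ means $x\mapsto|x|^{-\bm}$ is borderline-or-worse in $L^2_{loc}$. But $U$ is an $\h$-function hence locally $L^{2^\star_\alpha(0)}$, and near $0$ it behaves like $|x|^{-\bm}$; consistency of $U\in\h$ forces $\bm<(n-\alpha)/2<n/2$, so in fact the singularity at $0$ is always integrable for $U^2$. The issue is purely at infinity: $U^2\sim|y|^{-2\bp}$ and $\int^\infty |y|^{-2\bp}\,dy$ converges iff $2\bp>n$ iff $\gamma<\gamma_{crit}(\alpha)$. Thus:

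\emph{Case $0\le\gamma<\gamma_{crit}(\alpha)$:} then $2\bp>n$, $U\in L^2(\rn)$, the integral $\int_{B_{\delta/\epsilon}(0)}U^2\,dy\to\int_{\rn}U^2\,dy$, and $\int_{B_\delta}u_\epsilon^2\,dx=\epsilon^\alpha\bigl(\int_{\rn}U^2\,dx+o(1)\bigr)$. Combined with the $|x|^\tau$ error from the H\"older continuity of $a$ (which after rescaling is $\epsilon^{\alpha+\tau}\int_{B_{\delta/\epsilon}}|y|^\tau U^2\,dy$; one checks $2\bp-\tau>n$ for $\tau$ small, or handles it crudely, to see this is $o(\epsilon^\alpha)$) and the exterior $O(\epsilon^{n-\alpha})$ term, we get the first formula.

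\emph{Case $\gamma=\gamma_{crit}(\alpha)$:} then $2\bp=n$ exactly, so $U(y)^2\sim|y|^{-n}$ at infinity and the integral diverges logarithmically: $\int_{B_{\delta/\epsilon}(0)}U^2\,dy = \int_{B_{\delta/\epsilon}(0)\setminus B_R(0)}U^2\,dy + O(1)$, and using $U(y)^2 = |y|^{-n}(1+o(1))$ as $|y|\to\infty$ (from $\lim|y|^{\bp}U(y)=1$, so $|y|^{2\bp}U^2=|y|^n U^2\to 1$), we get $\int_{R\le|y|\le\delta/\epsilon}U^2\,dy = \omega_{n-1}\ln(\delta/\epsilon)+o(\ln\epsilon^{-1}) = \omega_{n-1}\ln(\epsilon^{-1})+o(\ln\epsilon^{-1})$. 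Hence $\int_{B_\delta}u_\epsilon^2\,dx = \omega_{n-1}\epsilon^\alpha\ln(\epsilon^{-1})+o(\epsilon^\alpha\ln\epsilon)$, and multiplying by $a(0)$ and checking that all error terms (the exterior $O(\epsilon^{n-\alpha})=O(\epsilon^\alpha)$ since here $n=2\alpha$ is the relevant borderline... actually one needs $n>\alpha$ so $\epsilon^{n-\alpha}=o(\epsilon^\alpha\ln\epsilon^{-1})$ when $n-\alpha>\alpha$, i.e.\ $n>2\alpha$; when $n=2\alpha$ it is $\epsilon^\alpha=o(\epsilon^\alpha\ln\epsilon^{-1})$ — fine either way) and the H\"older-correction term are indeed $o(\epsilon^\alpha\ln\epsilon)$ gives the second formula.

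The main obstacle is the bookkeeping at the logarithmic borderline $\gamma=\gamma_{crit}(\alpha)$: one must be careful that the sharp asymptotic $|y|^{\bp}U(y)\to 1$ (not merely the two-sided bound) is used to extract the precise constant $\omega_{n-1}$ in front of $\epsilon^\alpha\ln(\epsilon^{-1})$, and that the region $B_R(0)$ where the pointwise asymptotic is not yet effective contributes only $O(\epsilon^\alpha)=o(\epsilon^\alpha\ln\epsilon^{-1})$. Everything else is routine: the cutoff-versus-ball comparison, the change of variables, and absorbing the $C|x|^\tau$ modulus-of-continuity term of $a$ into the error. No genuinely new idea beyond Theorem~\ref{th:asymp:ext} is needed here; this is the standard Brezis--Nirenberg test-function computation adapted to the weight exponents $\bm,\bp$.
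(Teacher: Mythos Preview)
Your approach is essentially the same as the paper's: split off the annulus where $\eta\ne 1$, rescale on $B_\delta$, and use the decay of $U$ at infinity from Theorem~\ref{th:asymp:ext} to separate the $L^2$ case ($2\bp>n$) from the logarithmic borderline ($2\bp=n$). One arithmetic slip to correct: on $|x|\ge\delta$ the pointwise bound is $u_\epsilon(x)\le C\epsilon^{(\bp-\bm)/2}|x|^{-\bp}$, not $C\epsilon^{(n-\alpha)/2}|x|^{-\bp}$ (since $u_\epsilon(x)=\epsilon^{-(n-\alpha)/2}U(x/\epsilon)$ and $-(n-\alpha)/2+\bp=(\bp-\bm)/2$), so the exterior contribution is $O(\epsilon^{\bp-\bm})$, as the paper obtains, not $O(\epsilon^{n-\alpha})$; correspondingly, in this regime one has $\bp-\bm\ge\alpha$ (with equality exactly at $\gamma=\gamma_{crit}(\alpha)$), not $\le$. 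This corrected exterior bound still yields $o(\epsilon^\alpha)$ when $\gamma<\gamma_{crit}(\alpha)$ and $O(\epsilon^\alpha)=o(\epsilon^\alpha\ln\epsilon^{-1})$ at the threshold, so your conclusions are unaffected.
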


\begin{proof}[Proof of Proposition \ref{prop:test:subcrit}]
We write 
\begin{align*}
\int_\Omega a(\eta\ue)^2 dx&=\int_{B_\delta} a u_\epsilon^2 dx  +\int_{B_{2 \delta} \setminus B_\delta}a (\eta u_\epsilon)^2 dx\\
& = \epsilon^\alpha \int_{B_{\epsilon^{-1}\delta}} a(\eps x)U^2 dx + O(\eps^{\bp-\bm}).
\end{align*}
Assume that $\gamma<\gamma_{crit}(\alpha)$. Since $\bp > \frac{n}{2}$ and $U\in C^1(\rn\setminus\{0\})$ satisfies \eqref{ass1}, we get that $U\in L^2(\rn)$ and therefore, Lebesgue's convergence theorem and the assumption $\Bp-\Bm>\alpha$ yield
\begin{align*}
\int_\Omega a(\eta\ue)^2\, dx&= \epsilon^\alpha \left(\int_{\R^n} U^2 dx  \right)a(0) +  o(\epsilon^\alpha) \quad \text{ as } \epsilon \to 0.
\end{align*}
If now $\gamma = \gamma_{crit}(\alpha),$ then $\lim_{|x|\to \infty}|x|^{\frac{n}{2}}U(x)=1$ and $\bp-\bm =\alpha.$ Therefore
\begin{align*}
\int_\Omega (\eta\ue)^2 dx&= \omega_{n-1} a(0)\epsilon^\alpha \ln(\epsilon^{-1}) +  o(\epsilon^\alpha\ln\eps) \quad \text{ as } \epsilon \to 0.
\end{align*}
This proves Proposition \ref{prop:test:subcrit}.\end{proof}

\medskip\noindent Plugging together \eqref{est:num:1}, \eqref{est:num:2} and Proposition \ref{prop:test:subcrit} then yields, as $\eps\to 0$, 
\begin{eqnarray}
J_a^\Omega(\eta\ue)&=&\frac{\kappa\int_{\rn}\frac{U^{\crits}}{|x|^s}\, dx}{\left(\int_{\rn}\frac{U^{\crits}}{|x|^s}\, dx\right)^{\frac{2}{\crits}}}-a(0)\frac{\int_{\rn}U^2\, dx}{\left(\int_{\rn}\frac{U^{\crits}}{|x|^s}\, dx\right)^{\frac{2}{\crits}}}\eps^\alpha+o(\eps^\alpha)\nonumber\\
&=&J_0^{\rn}(U)-a(0)\frac{\int_{\rn}U^2\, dx}{\left(\int_{\rn}\frac{U^{\crits}}{|x|^s}\, dx\right)^{\frac{2}{\crits}}}\eps^\alpha+o(\eps^\alpha),\label{dev:souscrit}
\end{eqnarray}
when $\gamma<\gamma_{crit}(\alpha)$, and
\begin{eqnarray}
J_a^\Omega(\eta\ue)&=&J_0^{\rn}(U)-a(0)\frac{\omega_{n-1}}{\left(\int_{\rn}\frac{U^{\crits}}{|x|^s}\, dx\right)^{\frac{2}{\crits}}}\eps^\alpha\ln\frac{1}{\eps}+o(\eps^\alpha\ln\frac{1}{\eps}),\label{dev:crit}
\end{eqnarray}
when $\gamma=\gamma_{crit}(\alpha)$.

\subsection{The test function for the critical case}\label{subsec:test:subcrit} Here, we assume that $\gamma>\gamma_{crit}(\alpha)$. It follows from Theorem \ref{Theorem:TheMass} that there exists $H: \Omega\setminus\{0\}\to\rr$ such that 
\begin{equation*}
\left\{\begin{array}{ll}
H\in C^1(\Omega\setminus\{0\})\;,\;\xi H\in \homega &\hbox{ for all }\xi\in C_c^\infty(\rn\setminus\{0\}),\\
(-\Delta)^{\frac{\alpha}{2}}H-\left(\frac{\gamma}{|x|^\alpha}+a)\right)H=0& \hbox{ weakly in }\Omega\setminus\{0\}\\
H>0&\hbox{ in }\Omega\setminus\{0\}\\
H=0&\hbox{ in }\partial\Omega\\
{\rm and} \,\, \lim_{x\to 0}|x|^{\bp}H(x)=1.&
\end{array}\right.
\end{equation*}
Here the solution is in the sense of Definition \ref{def:punct}. In other words, the second identity means that for any $\varphi\in C^\infty_c(\Omega\setminus\{0\})$, we have that
\begin{equation}\label{eq:H}
\frac{C_{n,\alpha}}{2}\int_{(\rn)^2}\frac{(H(x)-H(y))(\varphi(x)-\varphi(y))}{|x-y|^{n+\alpha}}\, dxdy-\int_{\rn}\left(\frac{\gamma}{|x|^\alpha}+a\right) H\varphi\, dx=0.
\end{equation}
Note that this latest identity makes sense since $H\in L^1(\Omega)$ (since $\bp<n$). Let now $\eta$ be as in \eqref{def:eta}. Following the construction of the singular function $H$ in \eqref{def:u:mass}, there exists $g\in \homega$ such that 
$$H(x) := \frac{\eta(x)}{|x|^{\bp}} + g(x)\hbox{ for }x\in \Omega\setminus\{0\},$$
where 
\begin{equation}\label{eq:t}
(-\Delta)^{\frac{\alpha}{2}}g-\left(\frac{\gamma}{|x|^\alpha}+a\right)g=f,
\end{equation}
with $f\in L^\infty(\Omega)$ and $f\in C^1(B_\delta(0)\setminus\{0\})$. It follows from \eqref{bnd:f:0} that there  exists $c>0$ such that 
\begin{equation}\label{ppty:f}
|f(x)|\leq C|x|^{-\bp}\hbox{ for }x\in \Omega\setminus\{0\}\hbox{ and }|\nabla f(x)|\leq C|x|^{-\bp-1}\hbox{ for all }x\in B_{\delta/2}(0)\setminus\{0\}.
\end{equation}
We also have that
\begin{equation}\label{def:mass}
g (x)= \frac{m^\alpha_{\gamma,a}(\Omega)}{|x|^{\bm}}+o\left(\frac{1}{|x|^{\bm}}\right)\hbox{ as }x\to 0,\hbox{ and } |g(x)|\leq C|x|^{-\bm}\hbox{ for all }x\in\Omega.
\end{equation}
Define the test function as 
$$ T_\epsilon(x) = \eta u_\epsilon(x) + \epsilon^{\frac{\bp-\bm}{2}} g(x) \quad\text{ for all }  x \in \overline{\Omega} \setminus \{0\},$$
where 
$$u_\epsilon(x):=\eps^{-\frac{n-\alpha}{2}}U(\eps^{-1}x)\hbox{ for }x\in\rn\setminus\{0\},$$
and $U\in \h$ is such that $U>0$, $U\in C^1(\rn\setminus\{0\})$ and satisfies \eqref{eq:U} above for some $\kappa>0$ and also \eqref{asymp:U}. It is easy to see that $ \Te \in \homega$ for all  $\epsilon >0$. 

This subsection is devoted to computing the expansion of $J_a^\Omega(\Te)$ where $J_a^\Omega$ is defined in \eqref{def:J:A:B}, \eqref{def:A:B} and \eqref{def:bracket}. For simplicity, we set $S(x):=\frac{1}{|x|^{\bp}}$ for $x\in\rn\setminus \{0\}$. In particular, it follows from \eqref{lap:x:theta:dist} that we have that
\begin{equation}\label{eq:S}
(-\Delta)^{\frac{\alpha}{2}}S-\frac{\gamma}{|x|^\alpha}S=0\hbox{ weakly in }\rn\setminus\{0\},
\end{equation}
in the sense that $\langle S,\varphi\rangle=0$ for all $\varphi\in C^\infty_c(\rn\setminus\{0\})$. First note that 
\begin{equation*}
\lim_{\eps\to 0}\frac{T_\eps}{\eps^{\frac{\bp-\bm}{2}}}=H\hbox{ in }L^\infty_{loc}(\overline{\Omega}\setminus\{0\}).
\end{equation*}  
Therefore, since $|\eps^{-\frac{\bp-\bm}{2}}T_\eps(x)|\leq C|x|^{-\bp}$ for $x\in \Omega\setminus\{0\}$ with $2\bp<n$, Lebesgue's theorem yields as $\eps\to 0$, 
$$\int_\Omega a\Te^2\, dx=\eps^{\bp-\bm}\int_\Omega aH^2\, dx+ o\left(\eps^{\bp-\bm}\right),$$
Since $\Te=\eta\ue+\eps^{\frac{\bp-\bm}{2}}g$, we have that
\begin{eqnarray*}
A(\Te)&=& \langle \Te,\Te\rangle-\eps^{\bp-\bm}\int_\Omega aH^2\, dx+ o\left(\eps^{\bp-\bm}\right)\\
&=& \langle \eta\ue, \eta\ue\rangle+2 \eps^{\frac{\bp-\bm}{2}} \langle \eta\ue ,g \rangle + \eps^{\bp-\bm} \langle g, g\rangle-\eps^{\bp-\bm}\int_\Omega aH^2\, dx+ o\left(\eps^{\bp-\bm}\right) 
\end{eqnarray*}
We are now going to estimate these terms separately. First, Formula \eqref{def:B:eta} and \eqref{est:num:1} yield, as $\eps\to 0$ 
\begin{eqnarray}
A(\Te)&=& \kappa\int_{\rn}\frac{U^{\crits}}{|x|^s}\, dx +2 \eps^{\frac{\bp-\bm}{2}} \langle \ue ,\eta g \rangle +\eps^{\bp-\bm}M_\eps+ o\left(\eps^{\bp-\bm}\right), \label{est:A:1}
\end{eqnarray}
where
\begin{equation*}
M_\eps:=B_\eta\left(S,\eta S\right)+ 2  B_\eta\left(\frac{\ue}{\eps^{\frac{\bp-\bm}{2}}},g\right)+ \langle g, g\rangle-\int_\Omega aH^2\, dx.
\end{equation*}
As to the second term of \eqref{est:A:1}, we have 
$$\langle \ue ,\eta g \rangle=\kappa\int_{\rn}\frac{\ue^{\crits-1}\eta g}{|x|^s}\, dx.$$
We set $\theta_\eps:=\int_{\rn}\frac{\ue^{\crits-1}\eta g}{|x|^s}\, dx$. It is easy to check that, since $\eta g\in \homega$, $(\ue)_\eps$ is bounded in $\h$ and goes to $0$ weakly as $\eps\to 0$, we have that
\begin{equation}\label{lim:theta}
\lim_{\eps\to 0}\theta_\eps=0.
\end{equation}
Therefore we can rewrite \eqref{est:A:1} as
\begin{eqnarray}
A(\Te)&=& \kappa\int_{\rn}\frac{U^{\crits}}{|x|^s}\, dx+2 \kappa \eps^{\frac{\bp-\bm}{2}} \theta_\eps+\eps^{\bp-\bm}M_\eps+ o\left(\eps^{\bp-\bm}\right)\label{est:A:2}
\end{eqnarray}
as $\eps\to 0$.\\
We now estimate $M_\eps$. First, we write
$$B_\eta\left(\frac{\ue}{\eps^{\frac{\bp-\bm}{2}}},g\right)=\frac{C_{n,\alpha}}{2}\int_{(\rn)^2}F_\eps(x,y)\, dxdy,$$
where
$$F_\eps(x,y):=\frac{\eta(x)-\eta(y)}{|x-y|^{n+\alpha}}\left(\frac{\ue}{\eps^{\frac{\bp-\bm}{2}}}(y)g(x)-\frac{\ue}{\eps^{\frac{\bp-\bm}{2}}}(x)g(y)\right).$$
Remembering that $\eta\equiv 1$ in $B_\delta(0)$ and $\eta\equiv 0$ in $B_{2\delta}(0)^c$ and using \eqref{def:mass}, we get that
$$\left|F_\eps(x,y)\mathbbm{1}_{|x|<\delta/2}\right|\leq C\mathbbm{1}_{|x|<\delta/2}\mathbbm{1}_{|y|>\delta}|x|^{-\bp}|y|^{-(n+\alpha+\bm)}\in L^1((\rn)^2).$$
Similarly, we have a bound on $F_\eps$ on $\{|x|>3\delta\}$. By symmetry, this yields also a bound on $\{|y|<\delta/2\}\cup\{|y|>3\delta\}$. We are then left with getting a bound on ${\mathcal A}:=\left[B_{3\delta}(0)\setminus B_{\delta/2}(0)\right]^2$. 

For $(x,y)\in {\mathcal A}$, we have that
\begin{eqnarray*}
|F_\eps(x,y)|&\leq &C\frac{|x-y|}{|x-y|^{n+\alpha}}\cdot\left|\left(\frac{\ue}{\eps^{\frac{\bp-\bm}{2}}}(y)-\frac{\ue}{\eps^{\frac{\bp-\bm}{2}}}(x)\right)g(x)+\frac{\ue}{\eps^{\frac{\bp-\bm}{2}}}(x)(g(x)-g(y))\right|\\
&\leq &C|x-y|^{1-\alpha-n}\left(\left|\frac{\ue}{\eps^{\frac{\bp-\bm}{2}}}(y)-\frac{\ue}{\eps^{\frac{\bp-\bm}{2}}}(x)\right|+|g(x)-g(y)|\right).
\end{eqnarray*}
As noticed in the proof of Theorem \ref{Theorem:TheMass}, it follows from elliptic theory that $g\in C^1(\Omega\setminus\{0\})$. Therefore, there exists $C>0$ such that $|g(x)-g(y)|\leq C|x-y|$ for all $(x,y)\in {\mathcal A}$. 

Setting $\tilde{u}_\eps:=\eps^{-\frac{\bp-\bm}{2}}\ue$,  it follows from  \eqref{eq:U} that
$$(-\Delta)^{\frac{\alpha}{2}}\tilde{u}_\eps-\frac{\gamma}{|x|^\alpha}\tilde{u}_\eps=\kappa\eps^{\frac{\crits-2}{2}(\bp-\bm)}\frac{\tilde{u}_\eps^{\crits-1}}{|x|^s}\hbox{ weakly in }\h.$$
It then follows from \eqref{est:tue} and arguments similar to the Proof of  Theorem \ref{th:asymp:ext} (see Remark 2.5 and Theorem 2.1 of Jian-Li-Xiong \cite{Li_JEMS}) that $(\tilde{u}_\eps)_\eps$ is bounded in $C^1_{loc}(\rn\setminus\{0\})$. Therefore, there exists $C>0$ such that $|\tilde{u}_\eps(x)-\tilde{u}_\eps(y)|\leq C|x-y|$ for all $(x,y)\in {\mathcal A}$. Then, we get 
$$|F_\eps(x,y)|\leq C|x-y|^{2-\alpha-n}\in L^1({\mathcal A}).$$
Therefore, $(F_\eps)$ is uniformly dominated on $(\rn)^2$. Noting that $\frac{\ue}{\eps^{\frac{\bp-\bm}{2}}}(x)\to S(x)$ as $\eps\to 0$ for all $x\in \rn\setminus\{0\}$, Lebesgue's theorem yields
\begin{equation}\label{lim:B:1}
\lim_{\eps\to 0}B_\eta\left(\frac{\ue}{\eps^{\frac{\bp-\bm}{2}}},g\right)=B_\eta\left(S,g\right).
\end{equation}
Here again, note that $B_\eta(S,g)$ makes sense. Therefore, we get that $M_\eps=M+o(1)$ as $\eps\to 0$ where 
\begin{equation}\label{def:M}
M:=B_\eta(S,\eta S)+ 2  B_\eta\left(S,g\right)+ \langle g, g\rangle-\int_\Omega aH^2\, dx.
\end{equation}
We now estimate $B(\Te)$. Note first that since $p>2$, there exists $C(p)>0$ such that 
$$\left| |x+y|^p-|x|^p-p|x|^{p-2}xy \right|\leq C(p)\left(|x|^{p-2}y^2+|y|^p\right) \hbox{ for all $x,y\in\rr$}.$$
We therefore get that
\begin{eqnarray*}
B(\Te)&=& \int_{\rn}\frac{\left|\eta \ue+\eps^{\frac{\bp-\bm}{2}}g\right|^{\crits}}{|x|^s}\, dx=\int_{\rn}\frac{(\eta \ue)^{\crits}}{|x|^s}\, dx + \crits \eps^{\frac{\bp-\bm}{2}} \int_{\rn}\frac{\ue^{\crits-1}\eta^{\crits-1}g}{|x|^s}\, dx\\
&&+O\left(\eps^{\bp-\bm} \int_{\rn}\frac{\ue^{\crits-2}\eta^{\crits-2}g^2}{|x|^s}\, dx+\eps^{\frac{\crits}{2}(\bp-\bm)}\int_{\rn}\frac{|g|^{\crits}}{|x|^s}\, dx \right).\end{eqnarray*}
Since $\eta\equiv 1$ around $0$, we get that 
$$\int_{\rn}\frac{\ue^{\crits-1}\eta^{\crits-1}g}{|x|^s}\, dx=\int_{\rn}\frac{\ue^{\crits-1}\eta g}{|x|^s}\, dx+ O\left(\int_{\Omega\setminus B_{\delta}(0)}\frac{\ue^{\crits-1}g }{|x|^s}\, dx\right)= \theta_\eps+o\left(\eps^{\frac{\bp-\bm}{2}}\right)$$
as $\eps\to 0$. Therefore, in view of \eqref{est:num:2}, we deduce that
\begin{equation}\label{est:B}
B(\Te)=\int_{\rn}\frac{U^{\crits}}{|x|^s}\, dx+\crits \eps^{\frac{\bp-\bm}{2}}\theta_\eps+ o\left(\eps^{\bp-\bm}\right)
\end{equation}
as $\eps\to 0$. Plugging \eqref{est:A:2}, \eqref{lim:theta} and \eqref{est:B} into \eqref{def:J:A:B}, we get that
\begin{eqnarray}
J_a^\Omega(\Te)&=&\frac{\kappa \int_{\rn}\frac{U^{\crits}}{|x|^s}\, dx }{\left(\int_{\rn}\frac{U^{\crits}}{|x|^s}\, dx\right)^{\frac{2}{\crits}}}\left(1+\frac{M}{\kappa \int_{\rn}\frac{U^{\crits}}{|x|^s}\, dx}\eps^{\bp-\bm}+o\left(\eps^{\bp-\bm}\right)\right)\nonumber\\
&=&J_0^{\rn}(U)\left(1+\frac{M}{\kappa \int_{\rn}\frac{U^{\crits}}{|x|^s}\, dx}\eps^{\bp-\bm}+o\left(\eps^{\bp-\bm}\right)\right)\label{dev:JTE:1}
\end{eqnarray}
as $\eps\to 0$, where $M$ is defined in \eqref{def:M} and $J_0^{\rn}$ is as in \eqref{def:J:A:B}.

We now express $M$ in term of the mass. Note that in the classical (pointwise) setting, an integration by parts yield that $B_\eta(\varphi,\psi)$ defined in \eqref{def:B:eta} is an integral on the boundary of a domain. Hence, the mass appears by simply integrating by part independently the singular function $H$. The central remark we make here is that the integral on the boundary on a domain (defined in the local setting) can be seen as the limit of an integral on the domain via multiplication by a cut-off function with support converging to the boundary --which happened to be defined in the nonlocal setting. Therefore, despite the nonlocal aspect of our problem, we shall be able to apply the same strategy as in the local setting. 

\smallskip\noindent We shall be performing the following computations in the same order as the ones above made to get $A(\Te)$. The constant $M$ will therefore appear naturally in the two settings.

\medskip\noindent Let $\chi\in C^\infty(\rn)$ such that $\chi\equiv 0$ in $B_1(0)$ and $\chi\equiv 1$ in $\rn\setminus B_{2}(0)$. For $k\in\nn\setminus\{0\}$, define $\chi_k(x):=\chi(kx)$ for $x\in\rn$, so that
$$\chi_k(x)=0\hbox{ for }|x|<\frac{1}{k}\hbox{ and }\chi_k(x)=1\hbox{ for }|x|>\frac{2}{k}.$$
In particular, $(\chi_k)_k$ is bounded in $L^\infty(\rn)$ and $\chi_k(x)\to 1$ as $k\to +\infty$ for a.e. $x\in\rn$. Since $\chi_k H\in \homega$, then by the very definition of $H$ (see \eqref{eq:H}), we have that
\begin{eqnarray*}
0&=& \langle H,\chi_k H\rangle-\int_{\rn}aH \chi_k H\, dx\\
&=& \langle \eta S+g,\chi_k \eta S+\chi_k g \rangle-\int_{\rn}\chi_k aH^2\, dx\\
&=& \langle \eta S,\chi_k \eta S\rangle+\langle \eta S , \chi_k g\rangle+\langle \chi_k\eta S, g\rangle+\langle g,\chi_k g \rangle
-\int_{\rn}\chi_k aH^2\, dx\\
&=& \langle  S,\chi_k \eta^2 S\rangle+B_\eta(S,\chi_k \eta S)+\langle S , \eta \chi_k g\rangle+B_\eta(S,\chi_k g)+\langle  S, \chi_k\eta g\rangle\\
&&+B_{\chi_k\eta}(S,g)+\langle g,\chi_k g \rangle
-\int_{\rn}\chi_k aH^2\, dx.
\end{eqnarray*}
Since $aH^2\in L^1(\Omega)$ (this is a consequence of $2\bp<n$) and $S$ is a solution to \eqref{eq:S}, we get that
\begin{eqnarray*}
0&=&B_\eta(S,\chi_k \eta S)+B_\eta(S,\chi_k g)+B_{\chi_k\eta}(S,g)+\langle g,\chi_k g \rangle-\int_{\rn} aH^2\, dx+o(1)
\end{eqnarray*}
as $k\to +\infty$. We now estimate these terms separately. \\
Our first claim is that 
\begin{equation}\label{lim:tt}
\lim_{k\to +\infty}\langle \chi_k g,g\rangle=\langle g,g\rangle.
\end{equation}
Indeed,
\begin{eqnarray*}
\Vert \chi_k g-g\Vert_{\h}^2&=& \frac{C_{n,\alpha}}{2}\int_{(\rn)^2}\frac{\left|(1-\chi_k)(x)g(x)-(1-\chi_k)(y)g(y)\right|^2}{|x-y|^{n+\alpha}}\, dxdy\\
&\leq & C_{n,\alpha}\int_{(\rn)^2}|1-\chi_k(x)|^2\frac{|g(x)-g(y)|^2}{|x-y|^{n+\alpha}}\, dxdy\\
&&+C_{n,\alpha}\int_{(\rn)^2}g(y)^2\frac{|\chi_k(x)-\chi_k(y)|^2}{|x-y|^{n+\alpha}}\, dxdy.
\end{eqnarray*}
The first integral goes to $0$ as $k\to +\infty$ with Lebesgue's convergence theorme since $g\in \h$. For the second term, we use the change of variable $X=kx$, $Y=ky$ and the control of $g(x)$ by $|x|^{-\bm}$. This proves that $(\chi_k g)\to g$ in $\h$ as $k\to +\infty$. The claim follows and \eqref{lim:tt} is proved. 

\medskip\noindent We now write
\begin{eqnarray*}
B_\eta(S,\chi_k \eta S)&=&  \frac{C_{n,\alpha}}{2}\left(\int_{(\rn)^2}\chi_k(x)\tilde{F}(x,y)\, dxdy+\int_{(\rn)^2}G_k(x,y)\, dxdy\right),
\end{eqnarray*}
where 
$$\tilde{F}(x,y):=\frac{\eta(x)-\eta(y)}{|x-y|^{n+\alpha}}\left(S(y)(\eta S)(x)-S(x)(\eta S)(y)\right),$$
and
$$G_k(x,y):=\frac{(\eta(x)-\eta(y))(\chi_k(x)-\chi_k(y))(\eta S)(y) S(x)}{|x-y|^{n+\alpha}}.$$
As in the proof of \eqref{lim:B:1} and \eqref{lim:B:2:bis}, $\tilde{F}\in L^1((\rn)^2)$ and Lebesgue's convergence theorem yields
$$\lim_{k\to +\infty}\frac{C_{n,\alpha}}{2}\int_{(\rn)^2}\chi_k(x)\tilde{F}(x,y)\, dxdy=B_\eta(S,\eta S).$$
Arguing as in the proof of \eqref{lim:B:1}, we get the existence of $G\in L^1((\rn)^2)$ such that $|G_k(x,y)|\leq G(x,y)$ for all $(x,y)\in (\rn)^2$ such that $|x|<\delta/2$ or $|x|>3\delta$. By symmetry, a similar control also holds for $(x,y)\in (\rn)^2$ such that $|y|<\delta/2$ or $|y|>3\delta$. Moreover, for $\eps>0$ small enough, we have that $G_k(x,y)=0$ for $(x,y)\in (\rn)^2$ such that $|x|>\delta/2$ and $|y|>\delta/2$ (this is due to the definition of $\chi_k$). Therefore, since $\lim_{k\to +\infty}(\chi_k(x)-\chi_k(y))=0$ for a.e. $(x,y)\in(\rn)^2$, Lebesgue's convergence theorem yields $\int_{(\rn)^2}G_k(x,y)\, dxdy\to 0$ as $k\to +\infty$. We can then conclude that
$$\lim_{k\to +\infty}B_\eta(S,\chi_k \eta S)=B_\eta(S, \eta S).$$
Similar arguments yield 
$$\lim_{k\to +\infty}B_\eta(S,\chi_k g)=B_\eta(S, g).$$
Therefore, we get that
\begin{equation*}
0=B_\eta(S, \eta S)+B_\eta(S, g)+B_{\chi_k\eta}(S,g)+\langle g, g \rangle-\int_{\rn} aH^2\, dx+o(1)
\end{equation*}
as $k\to +\infty$. We also have that
\begin{eqnarray*}
B_{\chi_k \eta}(S,g)&=& \frac{C_{n,\alpha}}{2}\int_{(\rn)^2}\chi_k(x)\frac{\eta(x)-\eta(y)}{|x-y|^{n+\alpha}}\left(S(y)g(x)-S(x)g(y)\right)\, dxdy\\
&&+\frac{C_{n,\alpha}}{2}\int_{(\rn)^2}\eta(y)\frac{\chi_k(x)-\chi_k(y)}{|x-y|^{n+\alpha}}\left(S(y)g(x)-S(x)g(y)\right)\, dxdy.
\end{eqnarray*}
As above, the first integral of the right-hand-side goes to $B_\eta(S,g)$ as $k\to +\infty$. We now deal with the second integral. Using that $\bp+\bm=n-\alpha$, the change of variables $X=kx$ and $Y=ky$ yield
\begin{eqnarray*}
\int_{(\rn)^2}\eta(y)\frac{\chi_k(x)-\chi_k(y)}{|x-y|^{n+\alpha}}\left(S(y)g(x)-S(x)g(y)\right)\, dxdy
=\int_{(\rn)^2}F_k(X,Y)\, dXdY,
\end{eqnarray*}
where
$$F_k(X,Y):=\eta\left(\frac{Y}{k}\right)\frac{\chi(X)-\chi(Y)}{|X-Y|^{n+\alpha}}\left(\frac{1}{|Y|^{\bp}}g\left(\frac{X}{k}\right)k^{-\bm}-\frac{1}{|X|^{\bp}}g\left(\frac{Y}{k}\right)k^{-\bm}\right).$$
Note that there exists $C>0$ such that $|g(x)|\leq C|x|^{-\bm}$ for all $x\in\Omega\setminus\{0\}$. Since $\chi(X)=0$ for $|X|<1$ and $\chi(X)=1$ for $|X|>2$, arguing as in the proof of \eqref{lim:B:1}, we get that $|F_k(X,Y)|$ is uniformly bounded from above by a function in $L^1((\rn)^2)$ for $(X,Y)\in (\rn)^2$ such that $X\not\in B_3(0)\setminus B_{1/2}(0)$ or $Y\not\in B_3(0)\setminus B_{1/2}(0)$. 

\smallskip\noindent There exists $C>0$ such that $|\eta(X)-\eta(Y)|\leq C|X-Y|$ for all $(X,Y)\in [B_3(0)\setminus B_{1/2}(0)]^2$. Therefore, for such $(X,Y)$, we have that
\begin{eqnarray*}
|F_k(X,Y)|&\leq & C|X-Y|^{1-\alpha-n}\left| \left(\frac{1}{|Y|^{\bp}}-\frac{1}{|X|^{\bp}}\right)g\left(\frac{X}{k}\right)k^{-\bm}\right|\\
&&+C|X-Y|^{1-\alpha-n}\frac{1}{|X|^{\bp}}\left|g\left(\frac{X}{k}\right)k^{-\bm}-g\left(\frac{Y}{k}\right)k^{-\bm}\right|\\
&\leq & C|X-Y|^{2-\alpha-n}+ C|X-Y|^{1-\alpha-n}\left|g\left(\frac{X}{k}\right)k^{-\bm}-g\left(\frac{Y}{k}\right)k^{-\bm}\right|.
\end{eqnarray*}
Define $g_k(X):=g\left(\frac{X}{k}\right)k^{-\bm}$ for $X\in k\Omega$. It follows from \eqref{eq:t} and \eqref{ppty:f} that 
$$(-\Delta)^{\frac{\alpha}{2}}g_k-\left(\frac{\gamma}{|X|^\alpha}+k^{-\alpha}a(k^{-1}X)\right)g_k=f_k\hbox{ weakly in }H_0^{\alpha/2}(k\Omega)$$
where
$$f_k(X):=k^{-\bm-\alpha}f(k^{-1}X)\hbox{ so that }|f_k(X)|\leq Ck^{-(\alpha-(\bp-\bm))}|X|^{-\bp}$$
for all $X\in k\Omega$. Here again, elliptic regularity yields that $(g_k)$ is bounded in $C^1_{loc}(\rn\setminus\{0\})$. Therefore, there exists $C>0$ such that $$\left|g_k(X)-g_k(Y)\right|\leq C|X-Y|$$
for all $(X,Y)\in [B_3(0)\setminus B_{1/2}(0)]^2$. Therefore, we get that
$$|F_k(X,Y)|\leq C|X-Y|^{2-\alpha-n}\hbox{ for all }(X,Y)\in [B_3(0)\setminus B_{1/2}(0)]^2.$$
Therefore, since $\alpha<2$, $(F_k)$ is also dominated on this domain, and then on $(\rn)^2$. Finally, it follows from the definition \eqref{def:mass} of the mass that
$$\lim_{k\to +\infty}F_k(X,Y)=m_{\gamma,a}^\alpha(\Omega)\frac{\chi(X)-\chi(Y)}{|X-Y|^{n+\alpha}}\left(\frac{1}{|Y|^{\bp}|X|^{\bm}}-\frac{1}{|X|^{\bp}|Y|^{\bm}}\right)$$
for a.e. $(X,Y)\in (\rn)^2$. Therefore, Lebesgue's convergence theorem yields
$$0=B_\eta(S, \eta S)+2B_\eta(S, g)+K\cdot m_{\gamma,a}^\alpha(\Omega)+\langle g, g \rangle-\int_{\rn} aH^2\, dx,
$$
where $$K:=\int_{(\rn)^2}\frac{\chi(X)-\chi(Y)}{|X-Y|^{n+\alpha}}\left(\frac{1}{|Y|^{\bp}|X|^{\bm}}-\frac{1}{|X|^{\bp}|Y|^{\bm}}\right)\, dXdY.$$
Without loss of generality, we can assume that $\chi$ is radially symetrical and nondecreasing. Therefore, we get that $K>0$. With \eqref{def:M}, we then get that
$$M=-K\cdot m_{\gamma,a}^\alpha(\Omega)\hbox{ with }K>0.$$
Plugging this identity in \eqref{dev:JTE:1} yields
\begin{eqnarray}
J_a^\Omega(\Te)=J_0^{\rn}(U)\left(1-\frac{K}{\kappa \int_{\rn}\frac{U^{\crits}}{|x|^s}\, dx}\cdot m_{\gamma,a}^\alpha(\Omega) \eps^{\bp-\bm}+o\left(\eps^{\bp-\bm}\right)\right)\label{dev:JTE:2}
\end{eqnarray}

\subsection{Proof of Theorem \ref{th:1}} Theorem \ref{th:1} is now a direct consequence of \eqref{dev:souscrit}, \eqref{dev:crit}, \eqref{dev:JTE:2} and Proposition \ref{prop:exist}.

\end{document}